\theoremstyle{definition}
\newtheorem{dfn}{Definition}[section]
\newtheorem{rem}[dfn]{Remark}
\newtheorem{example}[dfn]{Example}
\theoremstyle{plain}
\newtheorem{thm}[dfn]{Theorem}
\newtheorem{prop}[dfn]{Proposition}
\newtheorem{lem}[dfn]{Lemma}
\newtheorem{cor}[dfn]{Corollary}
\newtheorem{quest}[dfn]{Question}
\begin{document}

\title[A homology valued invariant]{A homology valued invariant for trivalent fatgraph spines}
\author{Yusuke Kuno}
\address{Department of Mathematics, Tsuda College,
2-1-1 Tsuda-machi, Kodaira-shi Tokyo 187-8577, Japan}
\email{kunotti@tsuda.ac.jp}
\date{}

\subjclass[2010]{20F34, 32G15, 57N05}
\keywords{fatgraphs, Teichm\"uller space, Johnson homomorphism, spin structures}
\thanks{The author is supported by JSPS KAKENHI (No.26800044).} 

\begin{abstract}
We introduce an invariant for trivalent fatgraph spines of a once bordered surface, which takes values in the first homology of the surface.
This invariant is the secondary object coming from two 1-cocycles on the dual fatgraph complex, one introduced by Morita and Penner in 2008, and the other by Penner, Turaev, and the author in 2013.
We present an explicit formula for this invariant and investigate its properties.
We also show that the mod 2 reduction of the invariant is the difference of naturally defined two spin structures on the surface.
\end{abstract}

\maketitle

\section{Introduction}

Let $\Sigma_{g,1}$ be a once bordered $C^{\infty}$-surface of genus $g>0$, and let $\mathcal{M}_{g,1}$ be the mapping class group of $\Sigma_{g,1}$ relative to the boundary.
It is known that the Teichm\"uller space $\mathcal{T}(\Sigma_{g,1})$ of $\Sigma_{g,1}$ has an $\mathcal{M}_{g,1}$-equivariant ideal simplicial decomposition \cite{P04}.
Taking its dual, one obtains a contractible CW complex $\widehat{\mathcal{G}}(\Sigma_{g,1})$ on which $\mathcal{M}_{g,1}$ acts freely and properly discontinuously.
This CW complex is called the dual fatgraph complex of $\Sigma_{g,1}$, since its cells are indexed by fatgraph spines of $\Sigma_{g,1}$, which are graphs embedded in the surface satisfying some conditions.
Each $0$-cell of $\widehat{\mathcal{G}}(\Sigma_{g,1})$ corresponds to a trivalent fatgraph spine, and by contracting non-loop edges we obtain higher dimensional cells.
In particular, each oriented $1$-cell of $\widehat{\mathcal{G}}(\Sigma_{g,1})$ corresponds to a flip (or a Whitehead move) between trivalent fatgraph spines of $\Sigma_{g,1}$.

This combinatorial structure of the Teichm\"uller space has a number of applications to the cohomology of the mapping class group and the moduli space of Riemann surfaces.
See e.g., \cite{Har1} \cite{Har2} \cite{HZ} \cite{P88} \cite{Kon}.

Recently, mainly motivated by the theory of the Johnson homomorphisms \cite{Joh80} \cite{Joh83} \cite{Mor93}, several authors considered $1$-cocycles on $\widehat{\mathcal{G}}(\Sigma_{g,1})$ with coefficients in various $\mathcal{M}_{g,1}$-modules.
In 2008, Morita and Penner \cite{MP} first gave such a $1$-cocycle $j\in Z^1(\widehat{\mathcal{G}}(\Sigma_{g,1});\Lambda^3 H)$, where $\Lambda^3 H$ is the third exterior product of the first homology group $H=H_1(\Sigma_{g,1};\mathbb{Z})$.
(In fact, they worked with a once punctured surface, but their construction works for $\Sigma_{g,1}$ as well.)
Being a $1$-cocycle on $\widehat{\mathcal{G}}(\Sigma_{g,1})$, the cocycle $j$ associates an element of $\Lambda^3 H$ to each flip.
Fixing a trivalent fatgraph spine of $\Sigma_{g,1}$, one obtains from $j$ a twisted $1$-cocycle on $\mathcal{M}_{g,1}$.
Morita and Penner proved that its cohomology class in $H^1(\mathcal{M}_{g,1};\Lambda^3 H)$ is six times the extended first Johnson homomorphism $\tilde{k}$ \cite{MorExt}.
Similar constructions are also considered by Bene, Kawazumi and Penner \cite{BKP} for the second and higher Johnson homomorphisms,
by Massuyeau \cite{Mas} for Morita's refinement \cite{Mor93} of the higher Johnson homomorphisms,
and by Kuno, Penner and Turaev \cite{KPT} for the Earle class $k\in H^1(\mathcal{M}_{g,1};H)$.

We emphasize that these cocycles on $\widehat{\mathcal{G}}(\Sigma_{g,1})$ are all explicit and simple.
In this way, the Johnson homomorphisms and related objects extend \emph{canonically} to the \emph{Ptolemy groupoid} \cite{BKP}, the combinatorial fundamental path groupoid of $\widehat{\mathcal{G}}(\Sigma_{g,1})$.

It is interesting that there are many ways of constructing cocycle representatives for the cohomology classes such as $\tilde{k}$ and $k$, and that each construction reflects its own viewpoint for studying the mapping class group.
It can happen that two cocycles constructed differently give the same cohomology class.
In such a case, it is quite natural to compare these cocycles and to expect a secondary object behind there.

In this paper, we compare the Morita-Penner cocycle $j$ and the cocycle $m\in Z^1(\widehat{\mathcal{G}}(\Sigma_{g,1});H)$ which is related to $k$ and considered in \cite{KPT}.
Contracting the coefficients by using the intersection pairing on $H$, one has a natural homomorphism
$$
C\colon Z^1(\widehat{\mathcal{G}}(\Sigma_{g,1});\Lambda^3 H)\to Z^1(\widehat{\mathcal{G}}(\Sigma_{g,1});H).
$$
Let $j^{\prime}=C\circ j$.
It turns out that there is an $\mathcal{M}_{g,1}$-equivariant $0$-cochain $\xi\in C^0(\widehat{\mathcal{G}}(\Sigma_{g,1});H)$ such that $2j^{\prime}-m=\delta \xi$ (Proposition \ref{prop:char_xi_G}).
The $0$-cochain $\xi$ associates an element $\xi_G\in H$ to each trivalent fatgraph spine $G \subset \Sigma_{g,1}$.

We will study the secondary object $\xi_G$ as an $H$-valued invariant for trivalent fatgraph spines $G\subset \Sigma_{g,1}$.
First of all, Theorem \ref{thm:xi_explicit} gives an explicit formula for $\xi_G$.
Based on this formula, we show in Theorem \ref{thm:non-trivial_b} that $\xi_G$ is non-trivial.
At the present moment, we do not have a full understanding of the topological meaning of the invariant $\xi_G$.
In Theorem \ref{thm:xi_spin}, we give a partial result in this direction by relating the mod $2$ reduction of $\xi_G$ to naturally defined two spin structures on $\Sigma_{g,1}$.

This paper is organized as follows.
In \S 2, we first review the dual fatgraph complex and in particular describe its $2$-skeleton.
Then we recall the $1$-cocycles $j$ from \cite{MP} and $m$ from \cite{KPT}.
Also, we correct an error in \cite{KPT} about the evaluation of $m$.
In \S 3, we show the existence and uniqueness of $\xi$, and then present an explicit formula for $\xi_G$ (Theorem \ref{thm:xi_explicit}).
In \S 4, we show a certain gluing formula for $\xi_G$, and then the behavior of $\xi_G$ under a special kind of flip.
The latter result makes it possible to define $\xi_G$ for a trivalent fatgraph spine $G$ of a \emph{punctured} surface.
In \S 5, we first prove the non-triviality of $\xi_G$. Then we discuss the primitivity of $\xi_G$ and show some partial results.
In \S 6, we first show that given a trivalent fatgraph spine $G\subset \Sigma_{g,1}$, one can associate two spin structures on $\Sigma_{g,1}$. Then we prove that their difference coincides with the mod $2$ reduction of $\xi_G$.
In Appendix A, we consider another spin structure coming from a naturally defined non-singular vector field on $\Sigma_{g,1}$.

The author would like to thank Gw\'ena\"el Massuyeau for communicating to him the construction of the vector field $\mathcal{X}_G$ in Appendix A, Robert Penner for helpful remarks on a description of spin structures on $\Sigma_{g,1}$ in \S 6, and
Vladimir Turaev and Nariya Kawazumi for valuable comments to a draft of this paper.

\section{Fatgraph complex and cocycles}
\label{sec:fatgraph}

We fix some notation about graphs.
By a graph we mean a finite CW complex of dimension one.
For a graph $G$, we denote by $V(G)$ the set of vertices of $G$, by $E(G)$ the set of edges of $G$, and by $E^{\rm ori}(G)$ the set of oriented edges of $G$.
For $v\in V(G)$, we denote by $E^{\rm ori}_v(G)$ the set of oriented edges toward $v$.
The number of elements of $E^{\rm ori}_v(G)$ is called the \emph{valency} of $v$.
For $e\in E^{\rm ori}(G)$, we denote by $\bar{e}\in E^{\rm ori}(G)$ the edge $e$ with reversed orientation.
A \emph{fatgraph} is a graph $G$ endowed with a cyclic ordering to $E^{\rm ori}_v(G)$ about each $v\in V(G)$.

Let $\Sigma_{g,1}$ be a compact connected oriented $C^{\infty}$-surface of genus $g>0$ with one boundary component.
We fix two distinct points $p$ and $q$ on the boundary $\partial \Sigma_{g,1}$.

\begin{dfn}
\label{dfn:fgs}
An embedding $\iota\colon G\hookrightarrow \Sigma_{g,1}$ of a fatgraph $G$ into $\Sigma_{g,1}$ is called a \emph{fatgraph spine} of $\Sigma_{g,1}$ if the following conditions are satisfied.
\begin{enumerate}
\item The map $\iota$ is a homotopy equivalence.
\item For any $v\in V(G)$, the cyclic ordering given to $E^{\rm ori}_v(G)$ is compatible with the orientation of $\Sigma_{g,1}$.
\item We have $\iota(G)\cap \partial \Sigma_{g,1}=\{ p\}$ and $\iota^{-1}(p)$ is a unique univalent vertex of $G$.
The other vertices have valencies greater than $2$.  
\end{enumerate}
\end{dfn}

A unique edge connected to $\iota^{-1}(p)$ is called the \emph{tail} of $G$.
We consider fatgraph spines up to isotopies relative to $\partial \Sigma_{g,1}$.
If there is no danger of confusion, we identify $G$ with $\iota(G)$, and write $G$ instead of $\iota\colon G\hookrightarrow \Sigma_{g,1}$.
We denote by $V^{\rm int}(G)$ the set of non-univalent vertices of $G$.
We say that $G$ is \emph{trivalent} if the valency of any non-univalent vertex of $G$ is 3.

Fatgraph spines appear naturally in the combinatorial description of the Teichm\"uller space of a punctured or bordered surface.
This was first shown for punctured surfaces by Harer-Mumford \cite{Har2} and Thurston from the holomorphic point of view based on a work by Strebel \cite{Stre}, and by Penner \cite{P87} and Bowditch-Epstein \cite{BE} from the point of view of hyperbolic geometry.

In this paper, we work mainly with the once bordered surface $\Sigma_{g,1}$.
For definiteness, let us define the Teichm\"uller space $\mathcal{T}(\Sigma_{g,1})$ as the space of Riemannian metric on $\Sigma_{g,1}$ of constant Gaussian curvature $-1$ with geodesic boundary, modulo pull-back of the metric by self-diffeomorphisms of $\Sigma_{g,1}$ fixing $q$ which are isotopic to the identity relative to $q$.
Let $\mathcal{M}_{g,1}$ be the mapping class group of $\Sigma_{g,1}$ relative to $\partial \Sigma_{g,1}$.
Namely, $\mathcal{M}_{g,1}$ is the group of self-diffeomorphisms of $\Sigma_{g,1}$ fixing the boundary $\partial \Sigma_{g,1}$ pointwise, modulo isotopies fixing $\partial \Sigma_{g,1}$ pointwise.
Note that $\mathcal{M}_{g,1}$ is identified with the group of connected components of the group of self-diffeomorphisms of $\Sigma_{g,1}$ fixing $q$.
Then pull-back of the metric induces an action of $\mathcal{M}_{g,1}$ on $\mathcal{T}(\Sigma_{g,1})$.
This action is known to be free and properly discontinuous.

\begin{thm}[Penner \cite{P04}]
\label{thm:decomposition}
There is an $\mathcal{M}_{g,1}$-equivariant ideal simplicial decomposition of $\mathcal{T}(\Sigma_{g,1})$ with the following properties.
\begin{itemize}
\item Each simplex corresponds to a fatgraph spine of $\Sigma_{g,1}$.
\item The face relation between simplices corresponds to the contraction of a non-loop edge of a fatgraph spine.
\end{itemize}
\end{thm}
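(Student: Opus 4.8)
The plan is to follow Penner's decorated Teichm\"uller theory, adapting the convex hull construction from the punctured to the bordered setting. First I would pass to the \emph{decorated} Teichm\"uller space $\widetilde{\mathcal{T}}(\Sigma_{g,1})$, obtained from $\mathcal{T}(\Sigma_{g,1})$ by adjoining at the distinguished boundary point a one-real-parameter decoration (a choice of horocyclic data, or ``size''); this is a trivial $\mathbb{R}_{>0}$-bundle over $\mathcal{T}(\Sigma_{g,1})$, so it suffices to produce an $\mathcal{M}_{g,1}$-equivariant cell decomposition of $\widetilde{\mathcal{T}}(\Sigma_{g,1})$ that is invariant under the fiberwise scaling, and then take the quotient.

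The heart of the argument is the convex hull construction together with Penner's $\lambda$-length coordinates. Realize the universal cover of a decorated hyperbolic structure inside the upper sheet of the hyperboloid in Minkowski $3$-space; the decoration specifies a canonical lift of the relevant ideal points to a discrete, $\pi_1$-invariant subset of the positive light cone. Taking the closed convex hull of these light-cone points and radially projecting the faces of its boundary back onto the hyperboloid yields a locally finite, $\pi_1$-invariant decomposition of the convex core into ideal convex polygons; passing to the quotient gives an \emph{ideal cell decomposition} of $\Sigma_{g,1}$, i.e.\ a finite system of disjoint pairwise non-homotopic embedded arcs based at the distinguished point which cut $\Sigma_{g,1}$ into polygons. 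The $\lambda$-length coordinates identify $\widetilde{\mathcal{T}}(\Sigma_{g,1})$ with $\mathbb{R}_{>0}^{N}$ for any fixed ideal triangulation, make the convex hull construction piecewise real-analytic, and show that the combinatorial type of the resulting cell decomposition is constant on open cells whose closures are glued along lower strata exactly according to which arcs survive. Equivariance is automatic since the whole construction is canonical.

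Finally I would translate into fatgraphs by duality: put a vertex of $G$ in each complementary polygon and an edge transverse to each arc. Since the complement of the arcs is a disjoint union of disks, $G\hookrightarrow\Sigma_{g,1}$ is a homotopy equivalence; the orientation of $\Sigma_{g,1}$ induces a cyclic order on the half-edges at each vertex compatible with that orientation; and the distinguished boundary point produces the tail and the unique univalent vertex of Definition \ref{dfn:fgs}. A $k$-gon cell gives a valency-$k$ vertex, so the top-dimensional simplices (ideal triangulations) correspond to trivalent fatgraph spines. Passing to a codimension-one stratum, where exactly one convex hull face degenerates and two adjacent triangles merge into a quadrilateral, amounts to deleting one arc, which is precisely the contraction of the corresponding non-loop edge of the fatgraph; this gives the stated face relation.

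The main obstacle is establishing that the convex hull construction genuinely produces a cell decomposition varying continuously, indeed piecewise real-analytically, over $\widetilde{\mathcal{T}}(\Sigma_{g,1})$: one must check that the radial projection of the convex hull boundary is injective and surjective onto the convex core, that its faces are compact ideal polygons, and, most delicately, that the partition of $\widetilde{\mathcal{T}}(\Sigma_{g,1})$ by combinatorial type is a genuine decomposition into open cells attached along lower-dimensional ones exactly as prescribed by arc deletion. The analysis of when a convex hull face fails to be a triangle is where the $\lambda$-length machinery does the real work, and the bordered setting (with no honest cusps) requires care in defining the light-cone lifts of the endpoints of the boundary geodesics and in verifying the fiber-scaling invariance needed for the descent to $\mathcal{T}(\Sigma_{g,1})$.
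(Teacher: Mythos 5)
The paper offers no proof of this statement: it is quoted verbatim from Penner \cite{P04}, and the intended argument is exactly the decorated Teichm\"uller theory you describe (convex hull construction in Minkowski space, $\lambda$-length coordinates, and the arc-system/fatgraph duality). Your outline faithfully reproduces that approach, including the genuinely delicate points in the bordered setting, so it is the "same proof" as the one the paper is implicitly relying on.
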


Let $\widehat{\mathcal{G}}(\Sigma_{g,1})$ be the \emph{dual} of this ideal simplicial decomposition.
This is an honest CW complex of dimension $4g-2$.
We call $\widehat{\mathcal{G}}(\Sigma_{g,1})$ the \emph{dual fatgraph complex} of $\Sigma_{g,1}$.
Note that there is a natural cellular action of the mapping class group $\mathcal{M}_{g,1}$ on $\widehat{\mathcal{G}}(\Sigma_{g,1})$.
In fact, there is an $\mathcal{M}_{g,1}$-equivariant deformation retract of $\mathcal{T}(\Sigma_{g,1})$ onto $\widehat{\mathcal{G}}(\Sigma_{g,1})$.
See \cite{P-book}.

The $2$-skeleton of $\widehat{\mathcal{G}}(\Sigma_{g,1})$ is described as follows.

\begin{itemize}
\item Each $0$-cell corresponds to a trivalent fatgraph spine of $\Sigma_{g,1}$.
\item Each $1$-cell corresponds to a fatgraph spine $G$, where $G$ has a unique $4$-valent vertex and the other non-univalent vertices have valency $3$.
\item Each \emph{oriented} $1$-cell corresponds to a \emph{flip} (or a \emph{Whitehead move}) between trivalent fatgraph spines.
Here, if $e$ is a non-tail edge of a trivalent fatgraph spine, collapsing $e$ and expanding the resulting 4-valent vertex to the unique distinct direction, one produces another trivalent fatgraph spine.
We call this move a flip along $e$, and denote it by $W_e$. See Figure \ref{fig:flip}.
If $G^{\prime}$ is obtained from $G$ by a flip $W=W_e$, we write it as $G\overset{W}{\to} G^{\prime}$.
There is a natural bijection from $E(G)$ to $E(G^{\prime})$ which restricts to an obvious identification of $E(G)\setminus \{ e\}$ with $E(G^{\prime})\setminus \{ e^{\prime}\}$.
For this reason, we often use the same letter for edges of $G$ and $G^{\prime}$ corresponding to each other by this bijection.
\item Each $2$-cell corresponds to a fatgraph spine $G$, where either $G$ has a unique $5$-valent vertex  and the other non-univalent vertices have valency $3$, or $G$ has two $4$-valent vertices and the other non-univalent vertices have valency $3$.
\end{itemize}

\begin{figure}
\begin{center}
\caption{Flip along $e$}
\label{fig:flip}
{\unitlength 0.1in%
\begin{picture}( 25.6800, 14.2600)(  2.5000,-14.7000)%
%
\special{pn 13}%
\special{pa 690 734}%
\special{pa 1170 734}%
\special{fp}%
%
\special{pn 13}%
\special{pa 690 734}%
\special{pa 370 414}%
\special{fp}%
%
\special{pn 13}%
\special{pa 690 734}%
\special{pa 370 1054}%
\special{fp}%
%
\special{pn 13}%
\special{pa 1170 734}%
\special{pa 1490 414}%
\special{fp}%
%
\special{pn 13}%
\special{pa 1170 734}%
\special{pa 1490 1054}%
\special{fp}%
%
\special{pn 13}%
\special{pa 2450 494}%
\special{pa 2130 174}%
\special{fp}%
%
\special{pn 13}%
\special{pa 2450 494}%
\special{pa 2770 174}%
\special{fp}%
%
\special{pn 13}%
\special{pa 2450 494}%
\special{pa 2450 974}%
\special{fp}%
%
\special{pn 13}%
\special{pa 2450 974}%
\special{pa 2130 1294}%
\special{fp}%
%
\special{pn 13}%
\special{pa 2450 974}%
\special{pa 2770 1294}%
\special{fp}%
%
\special{pn 8}%
\special{pa 1330 894}%
\special{pa 1410 894}%
\special{fp}%
\special{pa 1330 894}%
\special{pa 1330 974}%
\special{fp}%
%
\special{pn 8}%
\special{pa 2610 1134}%
\special{pa 2690 1134}%
\special{fp}%
\special{pa 2610 1134}%
\special{pa 2610 1214}%
\special{fp}%
%
\special{pn 8}%
\special{pa 530 894}%
\special{pa 450 894}%
\special{fp}%
\special{pa 530 894}%
\special{pa 530 974}%
\special{fp}%
%
\special{pn 8}%
\special{pa 2290 1134}%
\special{pa 2210 1134}%
\special{fp}%
\special{pa 2290 1134}%
\special{pa 2290 1214}%
\special{fp}%
%
\special{pn 8}%
\special{pa 530 574}%
\special{pa 450 574}%
\special{fp}%
\special{pa 530 574}%
\special{pa 530 494}%
\special{fp}%
%
\special{pn 8}%
\special{pa 2290 334}%
\special{pa 2210 334}%
\special{fp}%
\special{pa 2290 334}%
\special{pa 2290 254}%
\special{fp}%
%
\special{pn 8}%
\special{pa 1330 574}%
\special{pa 1410 574}%
\special{fp}%
\special{pa 1330 574}%
\special{pa 1330 494}%
\special{fp}%
%
\special{pn 8}%
\special{pa 2610 334}%
\special{pa 2690 334}%
\special{fp}%
\special{pa 2610 334}%
\special{pa 2610 254}%
\special{fp}%
\put(18.1000,-7.3400){\makebox(0,0)[lb]{$\Longrightarrow$}}%
\put(18.3400,-8.9400){\makebox(0,0)[lb]{$W_e$}}%
\put(15.1400,-11.5800){\makebox(0,0)[lb]{$a$}}%
\put(15.1400,-3.5800){\makebox(0,0)[lb]{$b$}}%
\put(2.5800,-3.5000){\makebox(0,0)[lb]{$c$}}%
\put(2.5000,-11.7400){\makebox(0,0)[lb]{$d$}}%
\put(8.7400,-8.4600){\makebox(0,0)[lb]{$e$}}%
\put(28.1000,-14.1400){\makebox(0,0)[lb]{$a$}}%
\put(28.1800,-1.7400){\makebox(0,0)[lb]{$b$}}%
\put(19.9400,-1.7400){\makebox(0,0)[lb]{$c$}}%
\put(20.0200,-14.3000){\makebox(0,0)[lb]{$d$}}%
\put(25.3000,-8.1400){\makebox(0,0)[lb]{$e^{\prime}$}}%
\put(8.0000,-16.0000){\makebox(0,0)[lb]{$G$}}%
\put(24.0000,-16.0000){\makebox(0,0)[lb]{$G'$}}%
\end{picture}}%
\end{center}
\vspace{0.3cm}
\end{figure}

Let $G$ and $G^{\prime}$ be trivalent fatgraph spines.
Since $\widehat{\mathcal{G}}(\Sigma_{g,1})$ is connected, there is a finite sequence of flips
$$
G=G_0\overset{W_1}{\to} G_1 \overset{W_2}{\to} G_2 \to \cdots \overset{W_m}{\to} G_m=G^{\prime}
$$
from $G$ to $G^{\prime}$.
This sequence is not uniquely determined, but any two such sequences are related to each other by the following three types of relations among flips.

\begin{enumerate}
\item Involutivity relation: $W_{e^{\prime}}\circ W_e=1$ in the notation of Figure \ref{fig:flip}.
\item Commutativity relation: $W_{e_1}\circ W_{e_2}=W_{e_2}\circ W_{e_1}$ if $e_1$ and $e_2$ share no vertices.
\item Pentagon relation: $W_{f_4}\circ W_{g_3}\circ W_{f_2}\circ W_{g_1}\circ W_f=1$ in the notation of Figure \ref{fig:pentagon}.
\end{enumerate}

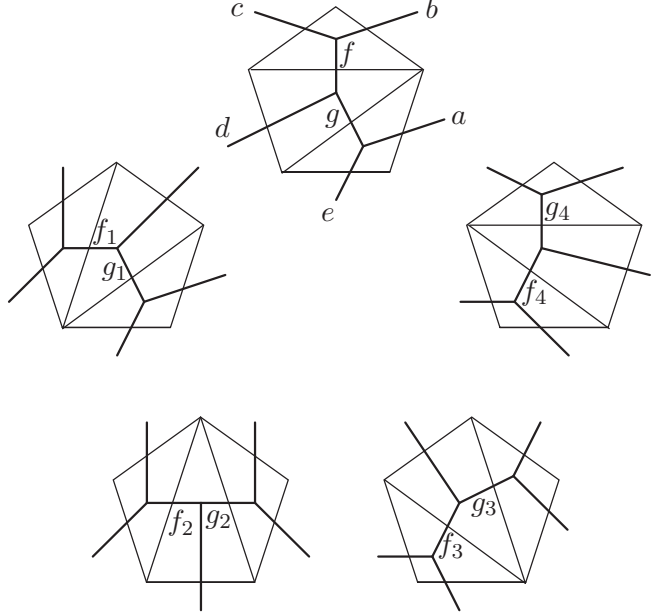
\begin{figure}
\begin{center}
\caption{Pentagon relation}
\label{fig:pentagon}
\unitlength 0.1in
\begin{picture}( 33.1800, 32.3200)(  7.9000,-34.4700)
%
{\color[named]{Black}{%
\special{pn 8}%
\special{pa 1502 3302}%
\special{pa 2062 3302}%
\special{pa 2236 2768}%
\special{pa 1782 2440}%
\special{pa 1330 2768}%
\special{pa 1502 3302}%
\special{pa 2062 3302}%
\special{fp}%
}}%
%
{\color[named]{Black}{%
\special{pn 8}%
\special{pa 2062 3302}%
\special{pa 1782 2440}%
\special{fp}%
}}%
%
{\color[named]{Black}{%
\special{pn 8}%
\special{pa 1502 3302}%
\special{pa 1782 2440}%
\special{fp}%
}}%
%
{\color[named]{Black}{%
\special{pn 8}%
\special{pa 1066 1972}%
\special{pa 1626 1972}%
\special{pa 1798 1438}%
\special{pa 1346 1110}%
\special{pa 892 1438}%
\special{pa 1066 1972}%
\special{pa 1626 1972}%
\special{fp}%
}}%
%
{\color[named]{Black}{%
\special{pn 8}%
\special{pa 1068 1978}%
\special{pa 1796 1438}%
\special{fp}%
}}%
%
{\color[named]{Black}{%
\special{pn 8}%
\special{pa 1068 1978}%
\special{pa 1348 1116}%
\special{fp}%
}}%
%
{\color[named]{Black}{%
\special{pn 8}%
\special{pa 3466 3302}%
\special{pa 2906 3302}%
\special{pa 2732 2768}%
\special{pa 3186 2440}%
\special{pa 3638 2768}%
\special{pa 3466 3302}%
\special{pa 2906 3302}%
\special{fp}%
}}%
%
{\color[named]{Black}{%
\special{pn 8}%
\special{pa 3464 3308}%
\special{pa 2736 2768}%
\special{fp}%
}}%
%
{\color[named]{Black}{%
\special{pn 8}%
\special{pa 3464 3308}%
\special{pa 3184 2446}%
\special{fp}%
}}%
%
{\color[named]{Black}{%
\special{pn 8}%
\special{pa 2202 1160}%
\special{pa 2762 1160}%
\special{pa 2936 626}%
\special{pa 2482 298}%
\special{pa 2030 626}%
\special{pa 2202 1160}%
\special{pa 2762 1160}%
\special{fp}%
}}%
%
{\color[named]{Black}{%
\special{pn 8}%
\special{pa 2932 626}%
\special{pa 2204 1166}%
\special{fp}%
}}%
%
{\color[named]{Black}{%
\special{pn 8}%
\special{pa 3892 1972}%
\special{pa 3332 1972}%
\special{pa 3160 1438}%
\special{pa 3612 1110}%
\special{pa 4066 1438}%
\special{pa 3892 1972}%
\special{pa 3332 1972}%
\special{fp}%
}}%
%
{\color[named]{Black}{%
\special{pn 8}%
\special{pa 4066 1438}%
\special{pa 3176 1438}%
\special{fp}%
}}%
%
{\color[named]{Black}{%
\special{pn 8}%
\special{pa 3170 1438}%
\special{pa 3898 1978}%
\special{fp}%
}}%
%
{\color[named]{Black}{%
\special{pn 13}%
\special{pa 1350 1558}%
\special{pa 1770 1138}%
\special{fp}%
\special{pa 1350 1558}%
\special{pa 1490 1838}%
\special{fp}%
\special{pa 1490 1838}%
\special{pa 1910 1698}%
\special{fp}%
\special{pa 1490 1838}%
\special{pa 1350 2118}%
\special{fp}%
\special{pa 1350 1558}%
\special{pa 1070 1558}%
\special{fp}%
\special{pa 1070 1558}%
\special{pa 790 1838}%
\special{fp}%
\special{pa 1070 1558}%
\special{pa 1070 1138}%
\special{fp}%
}}%
%
{\color[named]{Black}{%
\special{pn 13}%
\special{pa 3122 2888}%
\special{pa 2842 2468}%
\special{fp}%
\special{pa 3122 2888}%
\special{pa 2982 3168}%
\special{fp}%
\special{pa 2982 3168}%
\special{pa 2702 3168}%
\special{fp}%
\special{pa 2982 3168}%
\special{pa 3122 3448}%
\special{fp}%
\special{pa 3122 2888}%
\special{pa 3402 2748}%
\special{fp}%
\special{pa 3402 2748}%
\special{pa 3542 2468}%
\special{fp}%
\special{pa 3402 2748}%
\special{pa 3682 3028}%
\special{fp}%
}}%
%
{\color[named]{Black}{%
\special{pn 13}%
\special{pa 1784 2888}%
\special{pa 1784 3448}%
\special{fp}%
\special{pa 1784 2888}%
\special{pa 1504 2888}%
\special{fp}%
\special{pa 1504 2888}%
\special{pa 1504 2468}%
\special{fp}%
\special{pa 1504 2888}%
\special{pa 1224 3168}%
\special{fp}%
\special{pa 1784 2888}%
\special{pa 2064 2888}%
\special{fp}%
\special{pa 2064 2888}%
\special{pa 2344 3168}%
\special{fp}%
\special{pa 2064 2888}%
\special{pa 2064 2468}%
\special{fp}%
}}%
%
{\color[named]{Black}{%
\special{pn 13}%
\special{pa 3548 1558}%
\special{pa 4108 1698}%
\special{fp}%
\special{pa 3548 1558}%
\special{pa 3548 1278}%
\special{fp}%
\special{pa 3548 1278}%
\special{pa 3268 1138}%
\special{fp}%
\special{pa 3548 1278}%
\special{pa 3968 1138}%
\special{fp}%
\special{pa 3548 1558}%
\special{pa 3408 1838}%
\special{fp}%
\special{pa 3408 1838}%
\special{pa 3688 2118}%
\special{fp}%
\special{pa 3408 1838}%
\special{pa 3128 1838}%
\special{fp}%
}}%
%
{\color[named]{Black}{%
\special{pn 13}%
\special{pa 2484 746}%
\special{pa 1924 1026}%
\special{fp}%
\special{pa 2484 746}%
\special{pa 2484 466}%
\special{fp}%
\special{pa 2484 466}%
\special{pa 2064 326}%
\special{fp}%
\special{pa 2484 466}%
\special{pa 2904 326}%
\special{fp}%
\special{pa 2484 746}%
\special{pa 2624 1026}%
\special{fp}%
\special{pa 2624 1026}%
\special{pa 2484 1306}%
\special{fp}%
\special{pa 2624 1026}%
\special{pa 3044 886}%
\special{fp}%
}}%
%
{\color[named]{Black}{%
\special{pn 8}%
\special{pa 2044 626}%
\special{pa 2926 626}%
\special{fp}%
}}%
\put(24.9700,-6.1500){\makebox(0,0)[lb]{$f$}}%
\put(24.2700,-9.2800){\makebox(0,0)[lb]{$g$}}%
\put(12.7000,-17.2800){\makebox(0,0)[lb]{$g_1$}}%
\put(12.1900,-15.4100){\makebox(0,0)[lb]{$f_1$}}%
\put(16.1000,-30.6100){\makebox(0,0)[lb]{$f_2$}}%
\put(18.0900,-30.2400){\makebox(0,0)[lb]{$g_2$}}%
\put(30.0800,-31.8100){\makebox(0,0)[lb]{$f_3$}}%
\put(31.8400,-29.5800){\makebox(0,0)[lb]{$g_3$}}%
\put(34.4000,-18.5700){\makebox(0,0)[lb]{$f_4$}}%
\put(35.6400,-14.1400){\makebox(0,0)[lb]{$g_4$}}%
\put(30.8000,-9.1000){\makebox(0,0)[lb]{$a$}}%
\put(29.4700,-3.5500){\makebox(0,0)[lb]{$b$}}%
\put(19.3700,-3.4500){\makebox(0,0)[lb]{$c$}}%
\put(18.5700,-9.7500){\makebox(0,0)[lb]{$d$}}%
\put(24.0700,-14.1500){\makebox(0,0)[lb]{$e$}}%
\end{picture}%
\end{center}
\vspace{0.3cm}
\end{figure}

Here, we read composition of flips from right to left.
The relations (2) and (3) come from the boundaries of 2-cells of $\widehat{\mathcal{G}}(\Sigma_{g,1})$.

There is a construction of twisted 1-cocycles on the mapping class group using the fatgraph complex appeared first in \cite{MP}.
Let $M$ be a (left) $\mathcal{M}_{g,1}$-module.
By definition, a cellular 1-cochain $c$ on $\widehat{\mathcal{G}}(\Sigma_{g,1})$ with values in $M$ is an assignment of an element of $M$ to each flip $W$ satisfying $c(W_{e^{\prime}})=-c(W_e)$ for any pair of flips $W_e$ and $W_{e^{\prime}}$ as in Figure \ref{fig:flip}.
Such a $c$ is a 1-cocycle if it satisfies the commutative equation
$$
c(W_{e_1})+c(W_{e_2})=c(W_{e_2})+c(W_{e_1}),
$$
where $e_1$ and $e_2$ are any edges on a trivalent fatgraph spine sharing no vertices, and the pentagon equation
$$
c(W_{f_4})+c(W_{g_3})+c(W_{f_2})+c(W_{g_1})+c(W_f)=0
$$
for any $5$-tuple of flips as in Figure \ref{fig:pentagon}.

Now we assume that $c$ is a $1$-cocycle and is $\mathcal{M}_{g,1}$-equivariant in the sense that $\varphi\cdot c(W)=c(\varphi W)$ for any flip $W$ and $\varphi \in \mathcal{M}_{g,1}$.
Fix a trivalent fatgraph spine $G$.
For $\varphi\in \mathcal{M}_{g,1}$, taking a sequence of flips
$$
G=G_0\overset{W_1}{\to} G_1 \overset{W_2}{\to} G_2 \to \cdots \overset{W_m}{\to} G_m=\varphi(G)
$$
from $G$ to $\varphi(G)$,
we set
$$
c_G(\varphi):=\sum_{i=1}^m c(W_i) \in M.
$$
Since $c$ is a $1$-cocycle, this value does not depend on the choice of the sequence.
The map $c_G\colon \mathcal{M}_{g,1}\to M$ is a twisted $1$-cocycle.
In fact, for $\varphi, \psi \in \mathcal{M}_{g,1}$, take a sequence of flips from $G$ to $\varphi(G)$, and one from $G$ to $\psi(G)$.
Then the first sequence followed by application of $\varphi$ to the second is a sequence of flips from $G$ to $\varphi \psi(G)$. 
Since $c$ is $\mathcal{M}_{g,1}$-equivariant, we obtain the cocycle condition
$$
c_G(\varphi \psi)=c_G(\varphi)+\varphi \cdot c_G(\psi).
$$
It is easy to see that the cohomology class $[c_G]\in H^1(\mathcal{M}_{g,1}; M)$ does not depend on the choice of $G$.

Here we record an elementary fact which will be used later.

\begin{lem}
\label{lem:c_elementary}
Let $M$ be an $\mathcal{M}_{g,1}$-module and suppose that $c$ is an $\mathcal{M}_{g,1}$-equivariant cellular $1$-cochain on $\widehat{\mathcal{G}}(\Sigma_{g,1})$ with values in $M$.
Then for any trivalent fatgraph spine $G$ and any $\varphi,\psi\in \mathcal{M}_{g,1}$, we have
$$
c_G(\psi)+c_{\psi(G)}(\varphi)=c_G(\varphi)+\varphi \cdot c_G(\psi).
$$
\end{lem}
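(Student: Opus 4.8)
The plan is to compute $c_G(\varphi\psi)$ in two different ways, using only the additivity of $c$ along a concatenation of flip sequences together with the $\mathcal{M}_{g,1}$-equivariance of $c$ and the defining sign relation $c(W_{e^{\prime}})=-c(W_e)$ of a cellular $1$-cochain.

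Concretely, I would fix a sequence of flips $\sigma\colon G\to\psi(G)$ whose $c$-total equals $c_G(\psi)$, and a sequence $\tau\colon G\to\varphi(G)$ whose $c$-total equals $c_G(\varphi)$. Applying $\varphi$ termwise to $\sigma$ produces a sequence $\varphi\cdot\sigma\colon \varphi(G)\to\varphi(\psi(G))=\varphi\psi(G)$, and by equivariance ($c(\varphi W)=\varphi\cdot c(W)$) its $c$-total is $\varphi\cdot c_G(\psi)$; hence $\tau$ followed by $\varphi\cdot\sigma$ is a flip sequence from $G$ to $\varphi\psi(G)$ with $c$-total $c_G(\varphi)+\varphi\cdot c_G(\psi)$. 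On the other hand, reversing $\sigma$ gives a sequence $\bar\sigma\colon\psi(G)\to G$ whose $c$-total is $-c_G(\psi)$ (each flip is replaced by its reverse, which negates the value of $c$), so the concatenation of $\bar\sigma$, then $\tau$, then $\varphi\cdot\sigma$ is a flip sequence from $\psi(G)$ to $\varphi\psi(G)$ with $c$-total $-c_G(\psi)+c_G(\varphi)+\varphi\cdot c_G(\psi)$. Computing $c_{\psi(G)}(\varphi)$ along this sequence and rearranging yields exactly the asserted identity. When $c$ is in addition a $1$-cocycle, $c_{\psi(G)}(\varphi)$ is independent of the chosen sequence, so the identity holds for every choice; equivalently, both sides of the identity are then simply $c_G(\varphi\psi)$, read off along the two routings $G\to\psi(G)\to\varphi\psi(G)$ and $G\to\varphi(G)\to\varphi\psi(G)$.

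The only point requiring a little care — and it is what forces the asymmetric shape of the formula — is the bookkeeping of the $\mathcal{M}_{g,1}$-action: since $\mathcal{M}_{g,1}$ is non-abelian, one cannot obtain a sequence starting at $\psi(G)$ by translating $\tau\colon G\to\varphi(G)$ by $\psi$, because $\psi\cdot\tau$ ends at $\psi\varphi(G)$ rather than $\varphi\psi(G)$. This is precisely why the natural sequence realizing $c_{\psi(G)}(\varphi)$ must be routed back through $G$ and then pushed forward by $\varphi$, as in the computation above. Everything else is immediate from additivity of $c$-totals along concatenated sequences, so I do not anticipate any real obstacle; the statement is genuinely elementary.
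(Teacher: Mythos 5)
Your argument is correct and is essentially the paper's proof: both compute $c_G(\varphi\psi)$ along the two routings $G\to\psi(G)\to\varphi\psi(G)$ and $G\to\varphi(G)\to\varphi\psi(G)$, the second leg of the latter being the chosen $G\to\psi(G)$ sequence pushed forward by $\varphi$, the only difference being that the paper quotes the already-established cocycle identity $c_G(\varphi\psi)=c_G(\varphi)+\varphi\cdot c_G(\psi)$ where you re-derive it. Your extra care about path-dependence when $c$ is merely a $1$-cochain (exhibiting an explicit sequence from $\psi(G)$ to $\varphi\psi(G)$ realizing the identity, and noting that for a cocycle every choice works) is a sensible reading of the hypothesis, since $c_G$ is choice-independent only in the cocycle case.
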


\begin{proof}
Consider a sequence of flips from $G$ to $\psi(G)$ and one from $\psi(G)$ to $\varphi \psi(G)$.
The composition of these sequences is a sequence from $G$ to $\varphi \psi(G)$, and thus we obtain $c_G(\varphi \psi)=c_G(\psi)+c_{\psi(G)}(\varphi)$.
On the other hand, by the cocycle condition for $c_G$, we have $c_G(\varphi \psi)=c_G(\varphi)+\varphi \cdot c_G(\psi)$.
\end{proof}

We denote by $H=H_1(\Sigma_{g,1};\mathbb{Z})$ the first integral homology group of $\Sigma_{g,1}$.
Before giving examples of $\mathcal{M}_{g,1}$-equivariant cellular $1$-cochains on $\widehat{\mathcal{G}}(\Sigma_{g,1})$,
we recall from \cite{MP} homology markings for edges of fatgraph spines.
Let $G$ be a (not necessarily trivalent) fatgraph spine of $\Sigma_{g,1}$.
For $e\in E^{\rm ori}(G)$, there is an oriented simple loop $\hat{e}$ on $\Sigma_{g,1}$ satisfying the following two conditions.
\begin{itemize}
\item The loop $\hat{e}$ intersects $G$ once transversely at the middle point of $e$,
\item The ordered pair of the velocity vectors of $\hat{e}$ and $e$ at their intersection is compatible with the orientation of $\Sigma_{g,1}$.
\end{itemize}
Since the surface obtained from $\Sigma_{g,1}$ by cutting along $G$ is a disk, the homotopy class of such an $\hat{e}$ is unique.
We define $\mu(e)\in H$ to be the homology class of $\hat{e}$ and call it the \emph{homology marking} of $e$.
The map $\mu\colon E^{\rm ori}(G)\to H$ has the following properties.
\begin{enumerate}
\item For any $e\in E^{\rm ori}(G)$, we have $\mu(\bar{e})=-\mu(e)$.
\item The set $\{ \mu(e)\}_{e\in E^{\rm ori}(G)}$ generates $H$.
\item For any $v\in V(G)$, we have
$$
\sum_{e\in E^{\rm ori}_v(G)} \mu(e)=0.
$$
\end{enumerate}
For example, in the notation of the left part of Figure \ref{fig:flip}, where we orient edges $a,b,c,d$ as indicated, we have $\mu(a)+\mu(b)+\mu(c)+\mu(d)=0$.

In what follows, we consider $\mathcal{M}_{g,1}$-modules such as $H$ and its third exterior product $\Lambda^3 H$.
There is a twisted cohomology class $\tilde{k}\in H^1(\mathcal{M}_{g,1};\frac{1}{2}\Lambda^3 H)$ called the \emph{extended first Johnson homomorphism} \cite{MorExt}.
This cohomology class has a fundamental importance in the study of the cohomology of the mapping class group.
See \cite{KawMor}.

\begin{thm}[Morita-Penner \cite{MP}]
\label{thm:MP}
Keep the notation in Figure \ref{fig:flip}.
For the flip $W_e$, set
$$
j(W_e)=\mu(a)\wedge \mu(b)\wedge \mu(c) \in \Lambda^3 H.
$$
Then $j$ is an $\mathcal{M}_{g,1}$-equivariant $1$-cocycle on $\widehat{\mathcal{G}}(\Sigma_{g,1})$, and we have
$$
[j_G]=6\tilde{k}.
$$
\end{thm}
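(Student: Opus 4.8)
The plan is to establish the two assertions separately: that $j$ is a well-defined $\mathcal{M}_{g,1}$-equivariant cellular $1$-cocycle, and then the cohomological identity $[j_G]=6\tilde k$.

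\emph{Equivariance and the easy relations.} First I would use naturality of the homology marking: if $\varphi\in\mathcal{M}_{g,1}$ sends the trivalent fatgraph spine $G$ to $G'$ and the edge $e$ to $e'$, then $\mu_{G'}(e')=\varphi_*\mu_G(e)$, since $\varphi$ carries the loop $\hat e$ to a loop meeting the two conditions characterizing $\hat{e}'$ for $G'$. As $j(W_e)$ is built from markings by the natural operation $\wedge$, this gives $j(\varphi W_e)=\varphi\cdot j(W_e)$, i.e. equivariance. For the cochain condition $j(W_{e'})=-j(W_e)$ one reads off Figure \ref{fig:flip} the four edges $a,b,c,d$ cyclically arranged around the flipped edge; property (3) of $\mu$ gives $\mu(a)+\mu(b)+\mu(c)+\mu(d)=0$, and substituting $\mu(d)=-\mu(a)-\mu(b)-\mu(c)$ into $\mu(b)\wedge\mu(c)\wedge\mu(d)$ yields $-\mu(a)\wedge\mu(b)\wedge\mu(c)$ by multilinearity and alternation; it remains only to check that $\{b,c,d\}$ in this cyclic position is the triple entering $j(W_{e'})$. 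The commutativity relation is automatic: $\Lambda^3 H$ is abelian, and if $e_1,e_2$ share no vertices then $W_{e_1}$ leaves the four edges around $e_2$ and their markings untouched (and conversely), so the two occurrences of each term literally agree.

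\emph{The pentagon relation.} This is the one substantial computation. In the local model of Figure \ref{fig:pentagon} the relevant piece of $\Sigma_{g,1}$ is a disk with five boundary arcs, so the five ``external'' edges carry markings $x_1,\dots,x_5\in H$ with $x_1+\cdots+x_5=0$. Applying property (3) of $\mu$ at each trivalent vertex determines the markings of the two internal edges of the starting fatgraph, and likewise the markings of the internal edge created by each of the five flips $W_f,W_{g_1},W_{f_2},W_{g_3},W_{f_4}$, entirely in terms of $x_1,\dots,x_5$. Substituting, each of the five terms of the pentagon equation becomes an explicit triple wedge in the $x_i$, and one verifies that their sum vanishes using only multilinearity, alternation, and the single relation $x_1+\cdots+x_5=0$. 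I expect this to be the main obstacle for the cocycle statement: the real work is bookkeeping — orienting the internal edges consistently through all five flips, and identifying which of the four edges around each flipped edge is the one omitted in the formula.

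\emph{Identification with $6\tilde k$.} The class $[j_G]\in H^1(\mathcal{M}_{g,1};\Lambda^3 H)$ is independent of $G$, as recalled above. I would pin it down via restriction to the Torelli group $\mathcal{I}_{g,1}$: a standard Hochschild--Serre argument for $1\to\mathcal{I}_{g,1}\to\mathcal{M}_{g,1}\to\mathrm{Sp}_{2g}(\mathbb{Z})\to1$ (the case $g=1$ being trivial since $\Lambda^3 H=0$) shows $[j_G]$ is detected by its restriction, and on $\mathcal{I}_{g,1}$ the class $\tilde k$ restricts to the first Johnson homomorphism $\tau_1$. So it suffices to show $j_G|_{\mathcal{I}_{g,1}}=6\tau_1$. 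For $\varphi\in\mathcal{I}_{g,1}$ the value $j_G(\varphi)$ is computed from a sequence of flips $G\to\varphi(G)$ by summing the $j$-contributions; naturality of the markings shows this is an $\mathrm{Sp}$-equivariant homomorphism $\mathcal{I}_{g,1}\to\Lambda^3 H$, hence a multiple of $\tau_1$ (one must also check this multiple is the same on both irreducible $\mathrm{Sp}$-summands of $\Lambda^3 H_{\mathbb{Q}}$). To find the scalar I would take $\varphi$ a genus-one bounding pair map $t_{C_1}t_{C_2}^{-1}$, realize it by an explicit short flip sequence on a small fatgraph spine, and add up the terms, obtaining $6\,(c\wedge x\wedge y)=6\tau_1(\varphi)$. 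The factor $6=3!$ is a normalization artifact: the formula for $j$ treats the three edges $a,b,c$ symmetrically, whereas $\tilde k$ carries the standard Johnson normalization. Producing a flip sequence for a Torelli generator short enough to sum by hand, yet rich enough to read off $c\wedge x\wedge y$, is the main obstacle in this half.
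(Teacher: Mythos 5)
This theorem is quoted by the paper from Morita--Penner \cite{MP} without proof, so there is nothing internal to compare against; your sketch is, in outline, exactly the argument of \cite{MP}: equivariance and the involutivity/commutativity relations follow from naturality of $\mu$ and the vertex relation $\mu(a)+\mu(b)+\mu(c)+\mu(d)=0$, the pentagon is a direct (if tedious) wedge computation in terms of the five external markings, and the class is identified by restricting to the Torelli group and evaluating on a genus-one bounding pair map. The only point deserving explicit care in your last step is the one you already flag: since $\Lambda^3 H_{\mathbb{Q}}$ has two irreducible ${\rm Sp}$-summands, a single evaluation pins down the scalar only if the image of $\tau_1$ on your test element has nonzero projection to both summands (true for a generic genus-one BP map), and the passage from ``restrictions to $\mathcal{I}_{g,1}$ agree'' to ``$[j_G]=6\tilde{k}$'' uses the (rational) injectivity of the restriction map, i.e. $H^1({\rm Sp}_{2g}(\mathbb{Z});\Lambda^3 H\otimes\mathbb{Q})=0$, which is an input from \cite{MorExt} rather than something Hochschild--Serre gives for free.
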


Using the intersection pairing $(\ \cdot \ )$ on the homology, we define an ${\rm Sp}(H)$-equivariant map
$$
C\colon \Lambda^3 H\to H, \quad x\wedge y\wedge z \mapsto (x\cdot y)z+(y\cdot z)x+(z\cdot x)y
$$
called the contraction.
Morita \cite{MorFam1} showed that if $g\ge 2$, the twisted cohomology group $H^1(\mathcal{M}_{g,1};H)$ is infinite cyclic.
As is remarked in \cite{MorExt}, the element $k:=C(2\tilde{k})$ is a generator of this cohomology group.
Since Earle \cite{Earle} first gave a cocycle representative for $k$, we call $k$ the \emph{Earle class}. See \cite{KawSurv}.

\begin{thm}[Kuno-Penner-Turaev \cite{KPT}]
\label{thm:KPT}
Keep the notation in Figure \ref{fig:flip}.
For the flip $W_e$, set
$$
m(W_e)=\mu(a)+\mu(c)\in H.
$$
Then $m$ is an $\mathcal{M}_{g,1}$-equivariant $1$-cocycle on $\widehat{\mathcal{G}}(\Sigma_{g,1})$, and we have
$$
[m_G]=6k.
$$
\end{thm}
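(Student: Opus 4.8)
The plan is to verify, in turn, that $m$ is a well-defined $\mathcal{M}_{g,1}$-equivariant cellular $1$-cochain, that it satisfies the commutativity and pentagon equations, and finally to identify its cohomology class. For well-definedness I would need to check $m(W_{e'})=-m(W_e)$ for a flip pair $W_e,W_{e'}$ as in Figure~\ref{fig:flip}. The reverse flip $W_{e'}$ (so that $W_{e'}\circ W_e=1$) has $e'$ as its central edge, and reading off the cyclic orders at the two endpoints of $e'$ shows that $W_{e'}$ picks out the pair of legs $\{b,d\}$; hence $m(W_{e'})=\mu(b)+\mu(d)$ for the appropriate orientations. Adding the vertex relation $\sum_{e\in E^{\rm ori}_v(G)}\mu(e)=0$ over the two endpoints of $e$ yields $\mu(a)+\mu(b)+\mu(c)+\mu(d)=0$, so $m(W_{e'})=-(\mu(a)+\mu(c))=-m(W_e)$; the only care needed is to match the orientation conventions for the legs in the two pictures.

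Equivariance is immediate from the naturality of homology markings: a mapping class $\varphi$ carries an oriented simple loop meeting $G$ once transversally, with the sign convention recalled above, to such a loop for $\varphi(G)$, so $\varphi\cdot\mu(e)=\mu(\varphi e)$ for every oriented edge, and therefore $\varphi\cdot m(W_e)=m(\varphi W_e)$. The commutativity equation is then essentially automatic: if $e_1$ and $e_2$ share no vertices, the flip $W_{e_2}$ alters $G$ only inside a disk disjoint from $e_1$ and its endpoints, so the loops used to evaluate the two markings appearing in $m(W_{e_1})$ may be chosen away from that disk and are unchanged; thus $m(W_{e_1})$ takes the same value before and after $W_{e_2}$, and symmetrically, and the equation reduces to the commutativity of addition in $H$.

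The pentagon equation is the heart of the argument, and the step I expect to be the main obstacle. Here I would fix the five outer legs $a,b,c,d,e$ of Figure~\ref{fig:pentagon}, treat their homology markings as free parameters subject only to the local vertex relations, and then compute the marking of each internal edge encountered along the cycle $W_{f_4}\circ W_{g_3}\circ W_{f_2}\circ W_{g_1}\circ W_f$ by iterated use of the vertex relation at the $4$-valent vertex created by each flip. Substituting the resulting expressions into $m(W_f)+m(W_{g_1})+m(W_{f_2})+m(W_{g_3})+m(W_{f_4})$, the terms cancel and the sum vanishes. This is a direct but lengthy bookkeeping, and keeping track of the orientations of the diagonal legs at each of the five stages is where errors are easiest to make.

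It remains to identify $[m_G]$. By Morita the group $H^1(\mathcal{M}_{g,1};H)$ is infinite cyclic, generated by $k=C(2\tilde{k})$, so it suffices to determine a single integer. I would compare $m$ with the contracted Morita--Penner cocycle $j'=C\circ j$: since $C\colon\Lambda^3H\to H$ is an $\mathcal{M}_{g,1}$-equivariant homomorphism, $j'$ is again an $\mathcal{M}_{g,1}$-equivariant $1$-cocycle, and Theorem~\ref{thm:MP} gives $[j'_G]=C_*[j_G]=C_*(6\tilde{k})=3k$. Thus it is enough to show $[m_G]=2[j'_G]$, which I would establish by producing an $\mathcal{M}_{g,1}$-equivariant $0$-cochain $\xi$ on $\widehat{\mathcal{G}}(\Sigma_{g,1})$ — whose value $\xi_G\in H$ is assembled from the homology markings of the edges of $G$ together with the intersection pairing — satisfying $2j'-m=\delta\xi$; summing this relation along a sequence of flips from $G$ to $\varphi(G)$ and invoking the equivariance $\xi_{\varphi(G)}=\varphi\cdot\xi_G$ shows that $m_G$ and $2j'_G$ differ by the twisted coboundary $\varphi\mapsto\varphi\cdot\xi_G-\xi_G$, whence $[m_G]=2[j'_G]=6k$. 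Alternatively, one may bypass the construction of $\xi$ by restricting $m_G$ to the Torelli group $\mathcal{I}_{g,1}$, where it is a homomorphism $\mathcal{I}_{g,1}\to H$, evaluating it on one explicit element such as a genus-one bounding pair map by means of a concrete sequence of flips, and comparing with the known value of the Earle class on that element; since $k$ is detected on $\mathcal{I}_{g,1}$ this fixes the integer. Either route needs one explicit computation.
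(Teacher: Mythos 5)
Your outline is sound, but note that the paper does not reprove this theorem: it quotes it from \cite{KPT} and only corrects the constant, by recomputing the value of $m$ on the torus BP map (Lemma \ref{lem:correction}, giving $m(\varphi_{BP})=12\mu(a)$ in place of the erroneous $4a$ of \cite{KPT}), which is exactly your second route for pinning down the integer. Your verification of $m(W_{e'})=\mu(b)+\mu(d)=-m(W_e)$ via the vertex relations is correct, and equivariance and commutativity are as routine as you say; the pentagon identity and the normalization are where all the work lies, and both are only described, not carried out, in your proposal. Your primary route for the constant --- exhibiting an equivariant $0$-cochain $\xi$ with $2j'-m=\delta\xi$ and deducing $[m_G]=2[j'_G]=2\,C_*(6\tilde k)/2\cdot 2=6k$ --- is precisely the content of \S 3 of the paper, but with the logic reversed: there, the existence of $\xi$ is \emph{deduced} from Theorems \ref{thm:MP} and \ref{thm:KPT}, while the explicit formula of Theorem \ref{thm:xi_explicit} is verified by a direct six-case check on flips that does not use Theorem \ref{thm:KPT}. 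So your route is non-circular and in fact yields an independent confirmation of the corrected coefficient $6$ from the Morita--Penner theorem alone; what it buys is that one never has to produce an explicit flip sequence realizing a mapping class, at the cost of the case analysis behind $2j'-m=\delta\xi^0$. Either way, one explicit computation (the BP-map flip sequence, or the six local flip configurations of Figure \ref{fig:6cases}) remains to be done before the proposal is a proof.
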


Here we correct an error in \cite{KPT}.
Let $\varphi_{BP}=\varphi$ be the torus BP map in \cite{KPT} Fig.3, which was first considered in \cite{MP}.
In \cite{KPT} Lemma 1, it was asserted that $m(\varphi_{BP})=4a$, but this is not true.
More precisely, in the proof of the lemma, we computed the contribution of the second Dehn twist (5 flips) as $-4a$, but this should be corrected as $4a$.

\begin{lem}[correction of \cite{KPT} Lemma 1]
\label{lem:correction}
Let $\varphi_{BP}$ be the torus BP map as above.
Then we have
$$
m(\varphi_{BP})=12\mu(a).
$$
\end{lem}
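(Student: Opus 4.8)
The plan is to recompute $m(\varphi_{BP})$ directly on the dual fatgraph complex, by redoing the computation in the proof of \cite[Lemma 1]{KPT} and locating the sign error. Recall from \cite{MP} (reproduced as \cite[Fig.~3]{KPT}) the explicit trivalent fatgraph spine $G$ adapted to $\varphi_{BP}$, together with a finite sequence of flips
$$
G=G_0\overset{W_1}{\to}G_1\overset{W_2}{\to}\cdots\overset{W_N}{\to}G_N=\varphi_{BP}(G).
$$
Since $\varphi_{BP}$ is a bounding pair map, i.e.\ a product $T_{\gamma_1}T_{\gamma_2}^{-1}$ of Dehn twists along two disjoint homologous simple closed curves, this sequence splits into two consecutive blocks, one realizing each Dehn twist, the second block consisting of $5$ flips. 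By Theorem \ref{thm:KPT} the cochain $m$ is an $\mathcal{M}_{g,1}$-equivariant $1$-cocycle, so $m(\varphi_{BP})=m_G(\varphi_{BP})=\sum_{i=1}^N m(W_i)$ does not depend on the chosen sequence, and it suffices to evaluate it on this particular one.

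Before carrying this out it is useful to know the answer. The map $\varphi_{BP}$ lies in the Torelli group and so acts trivially on $H$; hence $m_G(\varphi_{BP})$ does not depend on $G$, and since the difference of any two cocycle representatives of $6k$ is a coboundary $\delta\eta$ with $\delta\eta(\varphi_{BP})=\varphi_{BP}\cdot\eta-\eta=0$, Theorem \ref{thm:KPT} gives $m(\varphi_{BP})=6\,k(\varphi_{BP})$. Now $k=C(2\tilde k)$, and on the Torelli group $\tilde k$ restricts to the first Johnson homomorphism $\tau_1$, so $k(\varphi_{BP})=2\,C(\tau_1(\varphi_{BP}))$. For the torus bounding pair map one has the standard Johnson value $\tau_1(\varphi_{BP})=c\wedge x\wedge y$, where $c=[\gamma_1]=[\gamma_2]$ and $x,y$ span the handle of the genus-one subsurface cobounded by $\gamma_1\cup\gamma_2$; since $x\cdot y=1$ and $c\cdot x=c\cdot y=0$, we get $C(\tau_1(\varphi_{BP}))=c=\mu(a)$, the homology marking of the edge $a$ in \cite[Fig.~3]{KPT}. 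Thus $m(\varphi_{BP})=6k(\varphi_{BP})=12\mu(a)$; in particular the value $4\mu(a)$ claimed in \cite{KPT} is impossible, and the error must be a localized sign mistake in the flip bookkeeping.

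It then remains to redo that bookkeeping and exhibit the sign flip explicitly. For each flip $W_i$ one draws $G_{i-1}$, identifies the two edges in the roles of $a$ and $c$ of Figure \ref{fig:flip}, and reads off $m(W_i)=\mu(a_i)+\mu(c_i)$, propagating the markings of the flipped edges along the sequence by the vertex relation $\sum_{e\in E^{\mathrm{ori}}_v(G)}\mu(e)=0$. Each $m(W_i)$ that occurs is an integer multiple of $\mu(a)$, so this is pure integer bookkeeping. All flips other than the five of the second Dehn twist contribute, as in \cite{KPT}, a total of $8\mu(a)$; the point is that one of the intermediate $4$-valent fatgraphs in the second block (or the orientation of one of its edges) was drawn incorrectly in \cite{KPT}, so that the orientation condition in the definition of the homology marking — compatibility of the velocity vectors of $\hat e$ and $e$ with the orientation of $\Sigma_{g,1}$ — was applied there with the wrong sign, replacing $\mu(a)$ by $-\mu(a)$ at that stage. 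With the correction the second block contributes $+4\mu(a)$ rather than $-4\mu(a)$, and the total becomes $8\mu(a)+4\mu(a)=12\mu(a)$. The main obstacle is exactly this bookkeeping: there is no conceptual content, but one must reproduce each intermediate fatgraph accurately and orient every relevant edge consistently, and the original slip shows how easily the orientation convention for $\mu$ is misapplied at a $4$-valent vertex created by a flip.
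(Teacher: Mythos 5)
Your main route---recomputing $m_G(\varphi_{BP})$ flip by flip along the sequence of \cite{KPT} and locating the sign error in the five flips realizing the second Dehn twist---is exactly what the paper does; the paper likewise records only that this block contributes $+4\mu(a)$ rather than $-4\mu(a)$, so that the total becomes $8\mu(a)+4\mu(a)=12\mu(a)$. However, as written your proposal never actually performs that bookkeeping: you take the $8\mu(a)$ from the other flips on faith from \cite{KPT} and \emph{assert} that the second block must come out to $+4\mu(a)$, locating the error only conjecturally (``one of the intermediate $4$-valent fatgraphs \dots was drawn incorrectly''). Since the flip computation is the only non-circular content of the lemma, it has to be carried out, not described.

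The a priori argument you use to ``know the answer'' is circular. The constant $6$ in Theorem \ref{thm:KPT} is not independent input: in \cite{KPT} one first shows $[m_G]=nk$ for some integer $n$ (Morita's computation of $H^1(\mathcal{M}_{g,1};H)$), and then determines $n$ precisely by evaluating $m$ on the torus BP map and comparing with $k(\varphi_{BP})=2\mu(a)$ --- which is why correcting Lemma 1 of \cite{KPT} from $4\mu(a)$ to $12\mu(a)$ forces the correction of their Theorem 6 from $-2k$ to $6k$. So deducing $m(\varphi_{BP})=6k(\varphi_{BP})=12\mu(a)$ from Theorem \ref{thm:KPT} assumes the lemma you are proving. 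A non-circular version of your consistency check does exist, but it must route through Morita--Penner instead: Theorem \ref{thm:MP} gives $[j_G]=6\tilde{k}$ independently of \cite{KPT}, the identity $2j'-m=\delta\xi^0$ is verified flip by flip in the proof of Theorem \ref{thm:xi_explicit} without any cohomological input, and $\delta\xi^0_G$ vanishes on the Torelli element $\varphi_{BP}$; hence $m(\varphi_{BP})=2j'(\varphi_{BP})=2C\bigl(6\tau_1(\varphi_{BP})\bigr)=12\,C(c\wedge x\wedge y)=12\mu(a)$. Either replace your appeal to Theorem \ref{thm:KPT} by this argument, or carry out the flip count in full.
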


In \cite{KPT} Theorem 6, it is asserted that $[m_G]=-2k$, but this should be corrected as in Theorem \ref{thm:KPT} above.

\section{A secondary invariant}
We consider the cocycle $j^{\prime}=C\circ j$.
For the flip $W_e$ in the notation of Figure \ref{fig:flip}, we have
$$
j^{\prime}(W_e)=(a\cdot b)\mu(c)+(b\cdot c)\mu(a)+(c\cdot a)\mu(b)\in H.
$$
Here and throughout the paper, to simplify the notation we write e.g., $(a\cdot b)$ instead of $(\mu(a)\cdot \mu(b))$.
By Theorems \ref{thm:MP} and \ref{thm:KPT}, for any trivalent fatgraph spine $G$, we have $2[j^{\prime}_G]=[m_G]=6k$.
Therefore, there exists an element $\xi_G\in H$ such that $2j^{\prime}_G-m_G=\delta \xi_G$.
Here the symbol $\delta$ in the right hand side means the coboundary map in the standard cochain complex of $\mathcal{M}_{g,1}$ with coefficients in $H$.
Explicitly, we have $(\delta \xi_G)(\varphi)=\varphi\cdot \xi_G-\xi_G$ for any $\varphi\in \mathcal{M}_{g,1}$.
Such a $\xi_G$ is unique since only $0$ is $\mathcal{M}_{g,1}$-invariant in $H$.
We regard the collection $\xi=\{ \xi_G \}_G$ as a cellular $0$-cochain of $\widehat{\mathcal{G}}(\Sigma_{g,1})$ with coefficients in $H$.

\begin{prop}
\label{prop:char_xi_G}
\begin{enumerate}
\item
The $0$-cochain $\xi$ is $\mathcal{M}_{g,1}$-equivariant in the sense that $\xi_{\psi(G)}=\psi\cdot \xi_G$
for any $\psi\in \mathcal{M}_{g,1}$ and any trivalent fatgraph spine $G$.
\item
We have $2j^{\prime}-m=\delta \xi$.
Namely, for any flip $G\overset{W}{\to}G^{\prime}$, we have $\xi_{G^{\prime}}-\xi_G=2j^{\prime}(W)-m(W)$.
\end{enumerate}
Moreover, these two properties characterize $\xi$.
\end{prop}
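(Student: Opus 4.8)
The plan is to treat $c:=2j^{\prime}-m$ as a single $\mathcal{M}_{g,1}$-equivariant cellular $1$-cocycle on $\widehat{\mathcal{G}}(\Sigma_{g,1})$ with values in $H$. It is a cocycle because $j$, hence $j^{\prime}=C\circ j$, and $m$ are; it is $\mathcal{M}_{g,1}$-equivariant because the $\mathcal{M}_{g,1}$-action on $H$ preserves the intersection pairing, so $C$ intertwines the actions. By Theorems \ref{thm:MP} and \ref{thm:KPT} we have $[c_G]=2[j^{\prime}_G]-[m_G]=0$, hence $c_G=\delta\xi_G$ for a unique $\xi_G\in H$, the uniqueness being forced by the vanishing of $H^{\mathcal{M}_{g,1}}$; this is precisely the definition of $\xi$ recalled above. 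With this setup, parts (1) and (2) become formal consequences of the $1$-cocycle machinery together with $H^{\mathcal{M}_{g,1}}=0$, and the characterization follows from connectedness of $\widehat{\mathcal{G}}(\Sigma_{g,1})$.

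For (1), I would apply Lemma \ref{lem:c_elementary} to $c$: for $\varphi,\psi\in\mathcal{M}_{g,1}$ and a trivalent spine $G$,
$$c_G(\psi)+c_{\psi(G)}(\varphi)=c_G(\varphi)+\varphi\cdot c_G(\psi).$$
Substituting $c_G=\delta\xi_G$ and $c_{\psi(G)}=\delta\xi_{\psi(G)}$ and expanding, the terms $-\xi_G$ and $\varphi\cdot\xi_G$ cancel and one is left with
$$\varphi\cdot\xi_{\psi(G)}-\xi_{\psi(G)}=\varphi\cdot(\psi\cdot\xi_G)-\psi\cdot\xi_G$$
for every $\varphi$, i.e.\ $\delta(\xi_{\psi(G)}-\psi\cdot\xi_G)=0$. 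Since only $0$ is $\mathcal{M}_{g,1}$-invariant in $H$, this yields $\xi_{\psi(G)}=\psi\cdot\xi_G$.

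For (2), fix a flip $G\overset{W}{\to}G^{\prime}$ and an arbitrary $\varphi\in\mathcal{M}_{g,1}$. Choosing a sequence of flips from $G^{\prime}$ to $\varphi(G^{\prime})$ and splicing it between $W$ and the $\varphi$-image of the reverse flip $\bar W\colon G^{\prime}\to G$ produces a sequence of flips from $G$ to $\varphi(G)$; evaluating $c$ along it, and using that $c_{G^{\prime}}(\varphi)$ is independent of the chosen path, gives
$$c_G(\varphi)=c(W)+c_{G^{\prime}}(\varphi)+c(\varphi(\bar W)).$$
The term $c(\varphi(\bar W))$ equals $\varphi\cdot c(\bar W)=-\varphi\cdot c(W)$ by $\mathcal{M}_{g,1}$-equivariance of $c$ and the antisymmetry $c(W_{e^{\prime}})=-c(W_e)$ of a cellular $1$-cochain on a flip pair. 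Substituting $c_G(\varphi)=\varphi\cdot\xi_G-\xi_G$ and $c_{G^{\prime}}(\varphi)=\varphi\cdot\xi_{G^{\prime}}-\xi_{G^{\prime}}$ and rearranging yields $\varphi\cdot(\xi_G-\xi_{G^{\prime}}+c(W))=\xi_G-\xi_{G^{\prime}}+c(W)$, so again by $H^{\mathcal{M}_{g,1}}=0$ we get $\xi_{G^{\prime}}-\xi_G=c(W)=2j^{\prime}(W)-m(W)$, which is the assertion $2j^{\prime}-m=\delta\xi$.

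For the characterization, suppose $\eta=\{\eta_G\}_G$ is any $\mathcal{M}_{g,1}$-equivariant $0$-cochain with $2j^{\prime}-m=\delta\eta$. Then $\xi_G-\eta_G$ is unchanged under every flip, hence constant over all trivalent fatgraph spines since $\widehat{\mathcal{G}}(\Sigma_{g,1})$ is connected; equivariance of $\xi$ and $\eta$ forces this constant to be $\mathcal{M}_{g,1}$-invariant, hence $0$, so $\eta=\xi$. The whole argument is essentially bookkeeping with flip sequences and the triviality of $H^{\mathcal{M}_{g,1}}$; the one step demanding care is the construction in (2) of the flip sequence from $G$ to $\varphi(G)$ passing through $G^{\prime}$ and the correct evaluation of $c(\varphi(\bar W))$ via equivariance and antisymmetry. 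Everything else is automatic.
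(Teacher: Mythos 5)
Your proof is correct and follows essentially the same route as the paper: both deduce (1) from Lemma \ref{lem:c_elementary} applied to $s=2j'-m$ together with the fact that only $0$ is $\mathcal{M}_{g,1}$-invariant in $H$, and both settle the uniqueness by observing that $\xi-\xi^0$ is an equivariant $0$-cocycle. Your argument for (2), splicing the flip $W$ and its $\varphi$-translated reverse around a path from $G'$ to $\varphi(G')$ and again invoking $H^{\mathcal{M}_{g,1}}=0$, correctly supplies the details that the paper dismisses with ``can be proved analogously.''
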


\begin{proof}
(1) For simplicity we write $s=2j^{\prime}-m$.
Take $\varphi\in \mathcal{M}_{g,1}$.
Using $s_G(\varphi)=\delta \xi_G (\varphi)=\varphi \cdot \xi_G-\xi_G$, etc., we compute from Lemma \ref{lem:c_elementary} that
\begin{align*}
s_{\psi(G)}(\varphi) &=s_G(\varphi)+\varphi \cdot s_G(\psi)-s_G(\psi) \\
&= \varphi \cdot \xi_G-\xi_G +\varphi \cdot (\psi \cdot \xi_G -\xi_G)
-(\psi \cdot \xi_G -\xi_G) \\
&=\varphi \cdot (\psi \cdot \xi_G)-\psi \cdot \xi_G \\
&=\delta (\psi \cdot \xi_G) (\varphi).
\end{align*}
This proves $s_{\psi(G)}=\delta (\psi \cdot \xi_G)$.
By the uniqueness of $\xi_{\psi(G)}$, it follows that $\xi_{\psi(G)}=\psi\cdot \xi_G$.

(2) can be proved analogously, and so we omit the detail.

Finally, suppose that $\xi^0$ is an $\mathcal{M}_{g,1}$-equivariant 0-cochain satisfying $2j^{\prime}-m=\delta \xi^0$.
Then $\xi-\xi^0$ is an $\mathcal{M}_{g,1}$-equivariant $0$-cocycle.
This shows that $\eta:=\xi(G)-\xi^0(G)\in H$ is independent of $G$ and $\varphi\cdot \eta=\eta$ for any $\varphi\in \mathcal{M}_{g,1}$.
Therefore $\eta$ must be zero and $\xi^0=\xi$.
\end{proof}

Let $G$ be a trivalent fatgraph spine of $\Sigma_{g,1}$.
We present an explicit formula for $\xi_G$.
To begin with, we introduce a total ordering for $E^{\rm ori}(G)$.
Note that if we cut $\Sigma_{g,1}$ along $G$, we obtain an oriented closed disk $D_G$.

\begin{dfn}
\label{dfn:total_order}
\begin{enumerate}
\item For $e,e^{\prime}\in E^{\rm ori}(G)$, we say $e\prec e^{\prime}$ if the edge $e$ occurs first when we go along the boundary of $D_G$ from $p$ by {\it clockwise} manner.
\item Let $e\in E^{\rm ori}(G)$.
We say that $e$ has the \emph{preferred orientation} (or $e$ is \emph{preferably oriented}) if $e\prec \bar{e}$.
\end{enumerate}
\end{dfn}

Note that any unoriented edge of $G$ has the unique preferred orientation.

Let $v\in V^{\rm int}(G)$.
We name three elements of $E^{\rm ori}_v(G)$ as $e_1,e_2$, and $e_3$, so that
\begin{enumerate}
\item $e_1\prec e_2$ and $e_1\prec e_3$, and
\item the edge $e_2$ is next to $e_1$ in the cyclic ordering given to $E^{\rm ori}_v(G)$.
\end{enumerate}
There are two possibilities for the ordering of $e_i$ and its inverse $\bar{e}_i$, namely,
$$
e_1\prec \bar{e}_2\prec e_2\prec \bar{e}_3\prec e_3\prec \bar{e}_1,
$$
or
$$
e_1\prec \bar{e}_2\prec e_3\prec \bar{e}_1\prec e_2\prec \bar{e}_3.
$$
The vertex $v$ is called {\it of type 1} if the former case happens, and is called {\it of type 2} otherwise.
Figure \ref{fig:2types} is an illustration of the situation.

\begin{figure}
\begin{center}
\caption{Two types for vertices}
\label{fig:2types}
\input{2types.tex}
\end{center}
\vspace{0.3cm}
\end{figure}

We can count the number of vertices of type 1 and that of type 2.

\begin{prop}
\label{prop:type1_2}
For any trivalent fatgraph spine $G$ of $\Sigma_{g,1}$, the number of trivalent vertices of type 1 is $2g-1$, and that of type 2 is $2g$.
\end{prop}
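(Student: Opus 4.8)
The plan is to translate the two vertex types into a purely combinatorial statement about the orientation of $G$ obtained by preferably orienting every edge, and then to derive the counts from a handshake-type identity together with an Euler characteristic computation.

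First I would dispose of the total count. Since $G$ is homotopy equivalent to $\Sigma_{g,1}$ we have $\chi(G)=\chi(\Sigma_{g,1})=1-2g$. If $G$ has $N$ trivalent vertices, it has in addition exactly one univalent vertex and $(3N+1)/2$ edges, so $(N+1)-(3N+1)/2=1-2g$, giving $N=4g-1$. Hence it is enough to show that the number of type-$2$ vertices exceeds the number of type-$1$ vertices by exactly $1$.

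The key step is the following reading of the definition of type. By Definition \ref{dfn:total_order}, orienting each edge of $G$ preferably defines an orientation of $G$; for a trivalent vertex $v$, since $e_1,e_2,e_3$ point toward $v$ while $\bar e_1,\bar e_2,\bar e_3$ point away, the inequality $e_i\prec\bar e_i$ (resp. $\bar e_i\prec e_i$) means exactly that the edge underlying $e_i$ points toward $v$ (resp. away from $v$) in the preferred orientation. In the type-$1$ ordering one has $e_1\prec\bar e_1$, $\bar e_2\prec e_2$, $\bar e_3\prec e_3$, so exactly one edge at $v$ points toward $v$; in the type-$2$ ordering one has $e_1\prec\bar e_1$, $\bar e_2\prec e_2$, $e_3\prec\bar e_3$, so exactly two edges at $v$ point toward $v$. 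Thus, with respect to the preferred orientation, type-$1$ vertices have in-degree $1$ and out-degree $2$, and type-$2$ vertices have in-degree $2$ and out-degree $1$ (and, as already asserted before the proposition, every trivalent vertex is one of these two types, i.e.\ the preferred orientation has no trivalent source or sink).

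I would then locate the univalent vertex $p$: its only edge is the tail, and since $\prec$ is defined by traversing $\partial D_G$ from $p$, the two orientations of the tail are the first and the last elements of $\prec$; unwinding the convention shows the first one is the orientation pointing away from $p$, so the tail is preferably oriented away from $p$ and $p$ has in-degree $0$, out-degree $1$. Now sum $(\text{in-degree}-\text{out-degree})$ over all vertices of $G$: the sum is $0$, the univalent vertex contributes $0-1=-1$, each type-$1$ vertex contributes $1-2=-1$, and each type-$2$ vertex contributes $2-1=1$; hence (number of type-$2$) $-$ (number of type-$1$) $=1$. Together with $N=4g-1$ this yields $2g-1$ vertices of type $1$ and $2g$ of type $2$. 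The one point needing genuine care is the claim that the tail is preferably oriented away from $p$: this is a matter of pinning down the convention "traverse $\partial D_G$ from $p$ clockwise" near the univalent vertex (a local picture makes it clear), and it must be checked because reversing that convention would interchange the two types.
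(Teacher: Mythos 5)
Your proof is correct and takes essentially the same approach as the paper: the paper likewise observes that a type-$i$ vertex has exactly $i$ preferably oriented edges pointing toward it and combines $V_1+V_2=4g-1$ with $V_1+2V_2=\#E(G)=6g-1$, which is the same count your handshake identity encodes. Your explicit check that the tail is preferably oriented away from $p$ is a detail the paper leaves implicit (it is needed for the equality $V_1+2V_2=6g-1$), and your reading of the convention agrees with the paper's later usage (e.g.\ the corner $c_1$ containing the preferably oriented tail in the proof of Lemma \ref{lem:odd_edge_cycle}).
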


\begin{proof}
Let $V_1$ and $V_2$ be the numbers of trivalent vertices of type 1 and that of type 2, respectively.
Since the number of trivalent vertices of $G$ is $4g-1$, we have $V_1+V_2=4g-1$.
We observe that if a trivalent vertex $v$ is of type $i$ ($i=1,2$), the number of preferably oriented edges toward $v$ is $i$.
Thus $V_1+2V_2$ is equal to the number of edges of $G$, i.e., $6g-1$.
Hence we obtain $V_1=2g-1$ and $V_2=2g$.
\end{proof}

We set
$$
\begin{cases}
e_v=e_2 {\rm \ and\ } f_v=e_3 & {\rm \ if\ } v {\rm \ is\ of\ type\ 1},\\
e_v=e_1 {\rm \ and\ } f_v=e_3 & {\rm \ if\ } v {\rm \ is\ of\ type\ 2}.
\end{cases}
$$

\begin{thm}
\label{thm:xi_explicit}
We have
$$
\xi_G=\sum_v (\mu(e_v)-\mu(f_v)),
$$
where the sum is taken over all trivalent vertices of $G$.
\end{thm}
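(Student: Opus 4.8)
The plan is to show that the right-hand side, which I denote $\xi^{\prime}_G:=\sum_v(\mu(e_v)-\mu(f_v))$ with the sum over the $4g-1$ trivalent vertices $v$ of $G$, satisfies the two properties characterizing $\xi$ in Proposition \ref{prop:char_xi_G}, and then to conclude $\xi^{\prime}=\xi$ from the uniqueness asserted there. The equivariance $\xi^{\prime}_{\psi(G)}=\psi\cdot\xi^{\prime}_G$ is immediate from naturality: a mapping class $\psi$ carries $G$, the cut-open disk $D_G$, the total order $\prec$, the preferred orientations, and hence the type of each vertex, to the corresponding data for $\psi(G)$, while $\mu_{\psi(G)}(\psi(e))=\psi_*\mu_G(e)$; so the two sides of the asserted equality match term by term.

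The substance is the coboundary identity: for an arbitrary flip $G\overset{W_e}{\to}G^{\prime}$ one must show $\xi^{\prime}_{G^{\prime}}-\xi^{\prime}_G=2j^{\prime}(W_e)-m(W_e)$, the right-hand side being $2\bigl((a\cdot b)\mu(c)+(b\cdot c)\mu(a)+(c\cdot a)\mu(b)\bigr)-\mu(a)-\mu(c)$ in the notation of Figure \ref{fig:flip}. The first step is to exploit that a flip is a local move: $G$ and $G^{\prime}$ agree outside a disk neighborhood of $e$, so $\mu_G(f)=\mu_{G^{\prime}}(f)$ for every edge $f\neq e$, the cyclic orderings at every vertex other than the two endpoints $v,w$ of $e$ are unchanged, and $\partial D_{G^{\prime}}$ differs from $\partial D_G$ only near the two sides of $e$. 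From this I would argue that the type of every vertex other than $v,w$ is unaffected, so all those vertices contribute identically to $\xi^{\prime}_G$ and $\xi^{\prime}_{G^{\prime}}$ and cancel in the difference, reducing the identity to the four vertices $v,w$ of $G$ and their images $v^{\prime},w^{\prime}$ in $G^{\prime}$. The second step is a finite case check, organized by the types of $v$ and $w$ --- equivalently, by how the sides of $e$ and of $a,b,c,d$ are interleaved along $\partial D_G$. In each configuration one identifies which edges play the roles of $e_v,f_v,e_w,f_w$ and $e_{v^{\prime}},f_{v^{\prime}},e_{w^{\prime}},f_{w^{\prime}}$, computes $\mu_{G^{\prime}}(e^{\prime})$ from the vertex relations in $G^{\prime}$, and evaluates each of $(a\cdot b),(b\cdot c),(c\cdot a)$ as $0$ or $\pm1$ from the linking pattern of the corresponding sides in $D_G$; then, using $\mu(\bar f)=-\mu(f)$, the vertex relations $\sum_{f\in E^{\rm ori}_v(G)}\mu(f)=0$, and $\mu(a)+\mu(b)+\mu(c)+\mu(d)=0$, each case should collapse to the claimed formula. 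With the flip identity and equivariance in hand, Proposition \ref{prop:char_xi_G} forces $\xi^{\prime}_G=\xi_G$.

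The hard part will be this last case analysis. The delicacy is that the type data (governing the terms $\mu(e_v)-\mu(f_v)$) and the intersection numbers $(a\cdot b),(b\cdot c),(c\cdot a)$ are encoded in the same combinatorial object, the interleaving of edge-sides along $\partial D_G$, so the bookkeeping must be carried out consistently; in addition one has to track orientations carefully (the edges $a,b,c,d$ need not be preferably oriented as drawn) and handle the degenerate configurations in which some of $a,b,c,d$ coincide or share a vertex other than $v$ or $w$. Pinning down exactly how $\prec$, and hence the types of $v,w,v^{\prime},w^{\prime}$, changes under $W_e$, and confirming that no further vertex is affected, is the crux of the argument.
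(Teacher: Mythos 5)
Your proposal follows essentially the same route as the paper: verify that $\xi^{\prime}$ is $\mathcal{M}_{g,1}$-equivariant and satisfies $\xi^{\prime}_{G^{\prime}}-\xi^{\prime}_G=2j^{\prime}(W_e)-m(W_e)$ for each flip, then invoke the uniqueness in Proposition \ref{prop:char_xi_G}; the reduction to the two endpoints of $e$ and the finite case check indexed by the interleaving of $a,b,c,d$ along $\partial D_G$ is exactly the paper's six-case verification via Figure \ref{fig:6cases}. The only thing left undone is grinding out those cases (the paper itself writes out just one and declares the rest similar), and your description of how each case collapses using the vertex relations and $\mu(a)+\mu(b)+\mu(c)+\mu(d)=0$ is the correct mechanism, so this is the same proof in outline.
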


\begin{proof}
We set $\xi_G^0=\sum_v (\mu(e_v)-\mu(f_v))$ and consider the collection $\xi^0=\{ \xi_G^0 \}_G$.
Clearly, $\xi^0$ is $\mathcal{M}_{g,1}$-equivariant.
By Proposition \ref{prop:char_xi_G}, it is sufficient to prove $2j^{\prime}-m=\delta \xi^0$.

\begin{figure}
\begin{center}
\caption{The case where $a\prec c\prec b\prec d$}
\label{fig:acbd}
\input{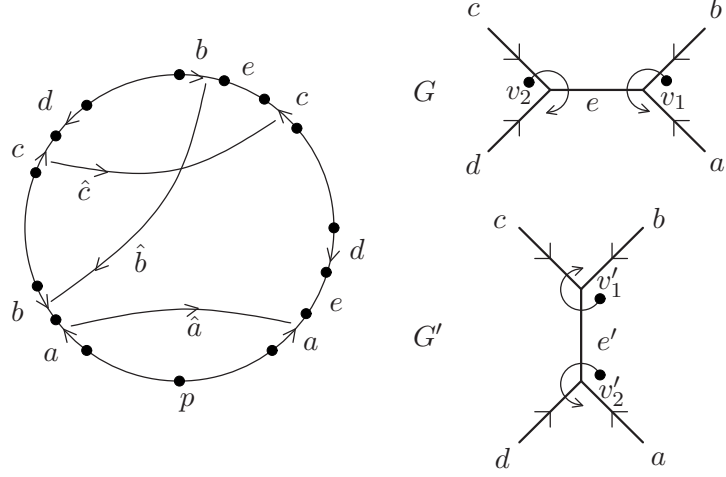}
\end{center}
\vspace{0.3cm}
\end{figure}

Take the notation as in Figure \ref{fig:flip}.
For example, assume that $a\prec c\prec b\prec d$.
For simplicity, we write $e$ instead of $\mu(e)$ for $e\in E^{\rm ori}(G)$.
Then we can see from the left part of Figure \ref{fig:acbd} that $(a\cdot b)=(c\cdot a)=0$ and $(b\cdot c)=1$, and so $j^{\prime}(W_e)=a$.
Thus $2j^{\prime}(W_e)-m(W_e)=2a-(a+c)=a-c$.
On the other hand, we can compute from the right part of Figure \ref{fig:acbd} that
\begin{align*}
\xi^0_{G^{\prime}}-\xi^0_G &=(e_{v^{\prime}_1}-f_{v^{\prime}_1})+(e_{v^{\prime}_2}-f_{v^{\prime}_2})-(e_{v_1}-f_{v_1})-(e_{v_2}-f_{v_2}) \\
&=(a+d-c)+(b+c-d)-(b-(c+d))-(c-(a+b)) \\
&=2a+b+d=2a+b+(-a-b-c)=a-c.
\end{align*}
We can compute similarly for other cases as well, and we obtain $2j^{\prime}(W_e)-m(W_e)=\xi^0_{G^{\prime}}-\xi^0_G$.
(There are essentially $6$ cases to consider; in each case in Figure \ref{fig:6cases}, there are two possibilities depending on whether $G$ corresponds to the left pictures or to the right pictures.
The latter case reduces to the former case by changing the role of $G$ and $G'$.)
Hence $2j^{\prime}-m=\delta \xi^0$, as required.
\end{proof}

\begin{figure}
\begin{center}
\caption{Situations near $e$}
\label{fig:6cases}
\input{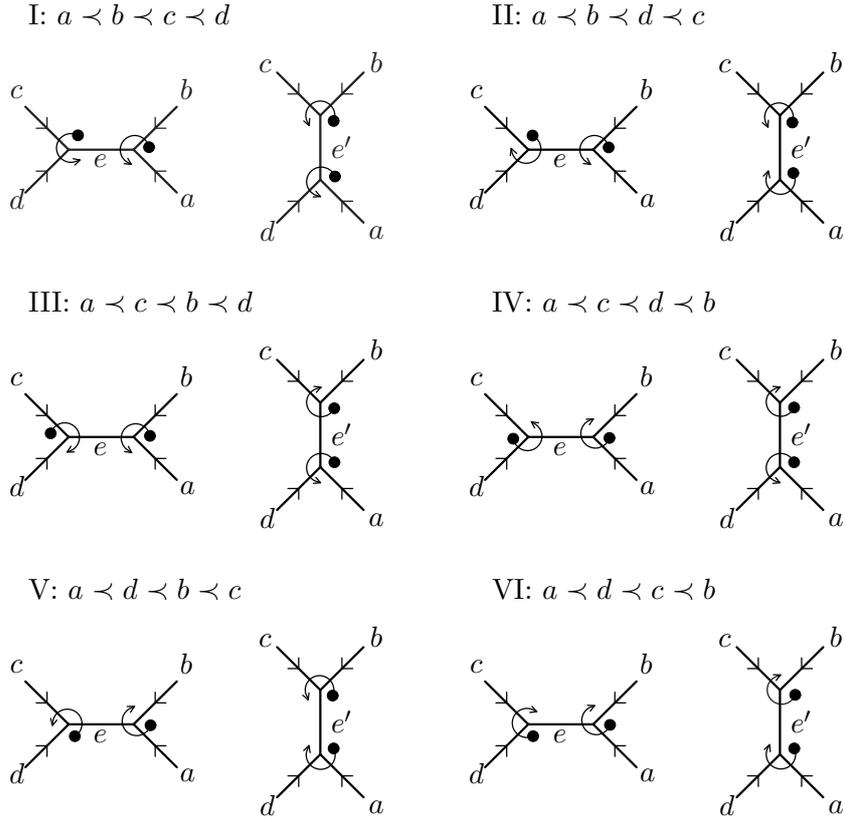}
\end{center}
\vspace{0.3cm}
\end{figure}

\begin{example}
\label{ex:xi_G}
Let $G$ be the fatgraph as shown in Figure \ref{fig:example}.
We name edges as in the figure and give them the preferred orientation.
For $1\le i\le g$ and $1\le j\le 3$, let $v_i^j\in V^{\rm int}(G)$ be the start point of $e_i^j$.
For $1\le i\le g-1$, let $v_i^4\in V^{\rm int}(G)$ be the end point of $e_i^4$.

Since $v_i^1$ is of type 2, its contribution is $\mu(\bar{e}_i^1)-\mu(\bar{e}_i^4)=\mu(e_i^4)-\mu(e_i^1)$.
Since $v_i^2$ is of type 1, its contribution is $\mu(e_i^1)-\mu(e_i^3)$.
Since $v_i^3$ is of type 1, its contribution is $\mu(e_i^2)-\mu(e_i^5)$.
Here we understand that $e_g^5=e_g^4$.
Since $v_i^4$ is of type 2, its contribution is $\mu(\bar{e}_i^5)-\mu(\bar{e}_{i+1}^0)=\mu(e_{i+1}^0)-\mu(e_i^5)$.

Moreover, we have $\mu(e_i^0)=0$, $\mu(e_i^1)+\mu(e_i^3)=\mu(e_i^2)$, and $\mu(e_i^4)=\mu(e_i^5)=-\mu(e_i^1)$.
Using these relations, we obtain
$$
\xi_G=\mu(e_g^1)+\sum_{i=1}^{g-1} 2\mu(e_i^1).
$$

\begin{figure}
\begin{center}
\caption{The fatgraph in Example \ref{ex:xi_G}}
\label{fig:example}
\input{example1.tex}
\end{center}
\vspace{0.3cm}
\end{figure}

\end{example}

\section{Elementary properties}

In this section, we record two elementary properties of $\xi_G$.

We first show a certain gluing formula.
Let $g$ and $g^{\prime}$ be positive integers, and suppose that we have two trivalent fatgraph spines
$\iota\colon G\hookrightarrow \Sigma_{g,1}$ and $\iota^{\prime}\colon G^{\prime}\hookrightarrow \Sigma_{g^{\prime},1}$.
Fix $e\in E^{\rm ori}(G)$.
Plugging the tail of $G^{\prime}$ in the right side of $e$, one produces a new fatgraph spine of $\Sigma_{g+g^{\prime},1}$. A precise construction is as follows. Let $v_e$ be the middle point of $e$.
\begin{enumerate}
\item Take a small closed disk $D_e$ in $\Sigma_{g,1}$ such that  ${\rm Int}(D_e)\cap G=\emptyset$, the boundary $\partial D_e$ intersects $G$ once at $v_e$, and the center of $D_e$ is on the right side of $e$ with respect to the orientation of $e$.
\item Glue $\Sigma_{g,1}\setminus {\rm Int}(D_e)$ with $\Sigma_{g^{\prime},1}$ along the boundaries $\partial D_e$ and $\partial \Sigma_{g^{\prime},1}$ so that the univalent vertex of $G^{\prime}$ is identified with $v_e$.
\item Let $G^{\prime \prime}$ be the union of the images of $G$ and $G^{\prime}$ in the result of gluing.
\end{enumerate}
The glued surface is diffeomorphic to $\Sigma_{g+g^{\prime},1}$.
We consider $G^{\prime \prime}$ as a trivalent fatgraph spine of $\Sigma_{g+g^{\prime},1}$ by dividing $e$ into two edges sharing the newly created trivalent vertex $v_e$.
These two edges receive their orientation from $e$.
We name them as $e_1,e_2\in E^{\rm ori}(G^{\prime \prime})$ so that $v_e$ is the end point of $e_1$.
The edges $e_1$ and $e_2$ have the same homology marking as $e$.

\begin{figure}
\begin{center}
\caption{Gluing}
\label{fig:gluing}
\unitlength 0.1in
\begin{picture}( 34.0000,  8.5500)(  3.0000, -9.5500)
%
{\color[named]{Black}{%
\special{pn 13}%
\special{pa 700 300}%
\special{pa 700 700}%
\special{fp}%
}}%
%
{\color[named]{Black}{%
\special{pn 13}%
\special{pa 700 300}%
\special{pa 900 200}%
\special{fp}%
}}%
%
{\color[named]{Black}{%
\special{pn 13}%
\special{pa 700 300}%
\special{pa 500 200}%
\special{fp}%
}}%
%
{\color[named]{Black}{%
\special{pn 13}%
\special{pa 700 700}%
\special{pa 900 800}%
\special{fp}%
}}%
%
{\color[named]{Black}{%
\special{pn 13}%
\special{pa 700 700}%
\special{pa 500 800}%
\special{fp}%
}}%
%
{\color[named]{Black}{%
\special{pn 8}%
\special{pa 700 400}%
\special{pa 730 460}%
\special{fp}%
\special{pa 700 400}%
\special{pa 670 460}%
\special{fp}%
}}%
%
{\color[named]{Black}{%
\special{pn 8}%
\special{pa 900 200}%
\special{pa 1100 100}%
\special{da 0.070}%
}}%
%
{\color[named]{Black}{%
\special{pn 8}%
\special{pa 500 200}%
\special{pa 300 100}%
\special{da 0.070}%
}}%
%
{\color[named]{Black}{%
\special{pn 8}%
\special{pa 500 800}%
\special{pa 300 900}%
\special{da 0.070}%
}}%
%
{\color[named]{Black}{%
\special{pn 8}%
\special{pa 900 800}%
\special{pa 1100 900}%
\special{da 0.070}%
}}%
%
{\color[named]{Black}{%
\special{pn 13}%
\special{pa 1100 500}%
\special{pa 1300 500}%
\special{fp}%
}}%
%
{\color[named]{Black}{%
\special{pn 8}%
\special{ar 1600 500 300 150  0.0000000 6.2831853}%
}}%
%
{\color[named]{Black}{%
\special{pn 13}%
\special{pa 2900 300}%
\special{pa 2900 700}%
\special{fp}%
}}%
%
{\color[named]{Black}{%
\special{pn 13}%
\special{pa 2900 300}%
\special{pa 3100 200}%
\special{fp}%
}}%
%
{\color[named]{Black}{%
\special{pn 13}%
\special{pa 2900 300}%
\special{pa 2700 200}%
\special{fp}%
}}%
%
{\color[named]{Black}{%
\special{pn 13}%
\special{pa 2900 700}%
\special{pa 3100 800}%
\special{fp}%
}}%
%
{\color[named]{Black}{%
\special{pn 13}%
\special{pa 2900 700}%
\special{pa 2700 800}%
\special{fp}%
}}%
%
{\color[named]{Black}{%
\special{pn 8}%
\special{pa 2900 350}%
\special{pa 2930 410}%
\special{fp}%
\special{pa 2900 350}%
\special{pa 2870 410}%
\special{fp}%
}}%
%
{\color[named]{Black}{%
\special{pn 8}%
\special{pa 3100 200}%
\special{pa 3300 100}%
\special{da 0.070}%
}}%
%
{\color[named]{Black}{%
\special{pn 8}%
\special{pa 2700 200}%
\special{pa 2500 100}%
\special{da 0.070}%
}}%
%
{\color[named]{Black}{%
\special{pn 8}%
\special{pa 2700 800}%
\special{pa 2500 900}%
\special{da 0.070}%
}}%
%
{\color[named]{Black}{%
\special{pn 8}%
\special{pa 3100 800}%
\special{pa 3300 900}%
\special{da 0.070}%
}}%
%
{\color[named]{Black}{%
\special{pn 13}%
\special{pa 2900 500}%
\special{pa 3100 500}%
\special{fp}%
}}%
%
{\color[named]{Black}{%
\special{pn 8}%
\special{ar 3400 500 300 150  0.0000000 6.2831853}%
}}%
%
{\color[named]{Black}{%
\special{pn 8}%
\special{pa 2900 550}%
\special{pa 2930 610}%
\special{fp}%
\special{pa 2900 550}%
\special{pa 2870 610}%
\special{fp}%
}}%
%
{\color[named]{Black}{%
\special{pn 4}%
\special{sh 1}%
\special{ar 2900 500 26 26 0  6.28318530717959E+0000}%
}}%
\put(6.0000,-11.0000){\makebox(0,0)[lb]{$G$}}%
\put(15.0000,-5.4500){\makebox(0,0)[lb]{$G^{\prime}$}}%
\put(7.5500,-6.0000){\makebox(0,0)[lb]{$e$}}%
\put(29.4000,-4.0500){\makebox(0,0)[lb]{$e_2$}}%
\put(29.4000,-6.8500){\makebox(0,0)[lb]{$e_1$}}%
\put(27.3500,-5.4500){\makebox(0,0)[lb]{$v_e$}}%
\put(29.0000,-11.0000){\makebox(0,0)[lb]{$G^{\prime \prime}$}}%
\end{picture}%
\end{center}
\vspace{0.3cm}
\end{figure}

A schematic figure of this construction is Figure \ref{fig:gluing}.
We call $G^{\prime \prime}$ the \emph{gluing} of $G$ and $G^{\prime}$ at $e$.
Note that the inclusions $\Sigma_{g,1}\setminus {\rm Int}(D_e)\hookrightarrow \Sigma_{g+g^{\prime},1}$ and $\Sigma_{g^{\prime},1}\hookrightarrow \Sigma_{g+g^{\prime},1}$ induce a direct sum decomposition
\begin{equation}
\label{eq:direct}
H_1(\Sigma_{g+g^{\prime},1};\mathbb{Z})\cong H_1(\Sigma_{g,1};\mathbb{Z})\oplus H_1(\Sigma_{g^{\prime},1};\mathbb{Z}).
\end{equation}

\begin{prop}[gluing formula]
Let $G^{\prime \prime}$ be the gluing of $G$ and $G^{\prime}$ at $e$, as above.
Then we have
$$
\xi_{G^{\prime \prime}}=\xi_G+\mu(e)+\xi_{G^{\prime}}.
$$
\end{prop}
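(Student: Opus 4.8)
The plan is to compute $\xi_{G^{\prime\prime}}$ directly from the explicit formula of Theorem~\ref{thm:xi_explicit}. Subdividing $e$ at $v_e$ gives a disjoint union $V^{\rm int}(G^{\prime\prime})=V^{\rm int}(G)\sqcup V^{\rm int}(G^{\prime})\sqcup\{v_e\}$ and a bijection of $E(G^{\prime\prime})$ with $(E(G)\setminus\{e\})\sqcup\{e_1,e_2\}\sqcup E(G^{\prime})$. First I would record how markings transport under \eqref{eq:direct}: for an edge coming from $G$ (resp.\ from $G^{\prime}$) the loop $\widehat{(\cdot)}$ can be taken inside $\Sigma_{g,1}\setminus{\rm Int}(D_e)$ (resp.\ inside $\Sigma_{g^{\prime},1}$), so its marking in $G^{\prime\prime}$ is the image of its marking in $G$ (resp.\ $G^{\prime}$), lying in the first (resp.\ second) summand; in particular $\mu_{G^{\prime\prime}}(e_1)=\mu_{G^{\prime\prime}}(e_2)=\mu(e)$, and by property (3) of markings at $v_e$ the former tail $t^{\prime}$ of $G^{\prime}$ has $\mu_{G^{\prime\prime}}(t^{\prime})=0$. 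Granting this, the formula reduces to three claims: (a) each $v\in V^{\rm int}(G)$ has the same type in $G^{\prime\prime}$ as in $G$; (b) each $v\in V^{\rm int}(G^{\prime})$ has the same type in $G^{\prime\prime}$ as in $G^{\prime}$; (c) the contribution $\mu(e_{v_e})-\mu(f_{v_e})$ of $v_e$ equals $\mu(e)$.

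The essential input is a description of the cut-open disk $D_{G^{\prime\prime}}$. I claim that its clockwise boundary traversal from $p$ is obtained from that of $D_G$ from $p$ by the following local modification at the side of $e$ adjacent to $D_e$ (after subdivision this is the side of $e_1$ followed by the side of $e_2$, both traversed in the direction of $e$): one inserts in between the entire clockwise traversal of $\partial D_{G^{\prime}}$ from $p^{\prime}$, flanked by the two sides of $t^{\prime}$ (first $t^{\prime}$ oriented away from $v_e$, last $t^{\prime}$ oriented toward $v_e$). Granting this, the total order $\prec$ on $E^{\rm ori}(G^{\prime\prime})$ restricts to the total order $\prec$ of $G$ on the oriented edges inherited from $G$, and to the total order $\prec$ of $G^{\prime}$ on the oriented edges inherited from $G^{\prime}$ (the latter occupying a single block). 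Since the type of a trivalent vertex $v$ and the pair $(e_v,f_v)$ depend only on the cyclic order at $v$ and on the relative $\prec$-order of the six oriented half-edges at $v$ --- all inherited from $G$ when $v\in V^{\rm int}(G)$, and from $G^{\prime}$ when $v\in V^{\rm int}(G^{\prime})$ --- claims (a) and (b) follow, and the sums of contributions over $V^{\rm int}(G)$ and over $V^{\rm int}(G^{\prime})$ equal $\xi_G$ and $\xi_{G^{\prime}}$ respectively, placed in the two summands of \eqref{eq:direct}.

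For (c), the oriented edges of $G^{\prime\prime}$ pointing toward $v_e$ are $e_1$ (marking $\mu(e)$), $\bar{e}_2$ (marking $-\mu(e)$) and $s:=t^{\prime}$ oriented toward $v_e$ (marking $0$); since $t^{\prime}$ was plugged into the right side of $e$, the cyclic order of $E^{\rm ori}_{v_e}(G^{\prime\prime})$ is $(e_1,s,\bar{e}_2)$. From the polygon description the six half-edges at $v_e$ are $\prec$-ordered as $e_1\prec\bar{s}\prec s\prec e_2\prec\bar{e}_2\prec\bar{e}_1$ when $e$ is preferably oriented in $G$, and as $\bar{e}_2\prec\bar{e}_1\prec e_1\prec\bar{s}\prec s\prec e_2$ otherwise; in both cases one reads off that $v_e$ is of type 1, with $(e_{v_e},f_{v_e})$ equal to $(s,\bar{e}_2)$ in the first case and to $(e_1,s)$ in the second, so the contribution is $\mu(s)-\mu(\bar{e}_2)=\mu(e)$, resp.\ $\mu(e_1)-\mu(s)=\mu(e)$. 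Summing (a)--(c) gives $\xi_{G^{\prime\prime}}=\xi_G+\mu(e)+\xi_{G^{\prime}}$.

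I expect the main obstacle to be establishing the polygon description: that plugging the tail of $G^{\prime}$ into the right side of $e$ amounts, after cutting along $G^{\prime\prime}$, to splicing $\partial D_{G^{\prime}}$ into $\partial D_G$ at the correct side of $e$, together with the exact local configuration at $v_e$ (its three corners, and which oriented half-edges are consecutive at each). This is a careful but routine exercise in ribbon-graph combinatorics; the rest is bookkeeping with Theorem~\ref{thm:xi_explicit} and \eqref{eq:direct}.
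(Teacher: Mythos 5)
Your proposal is correct and follows essentially the same route as the paper: both apply the explicit formula of Theorem \ref{thm:xi_explicit} to the decomposition $V^{\rm int}(G'')\cong V^{\rm int}(G)\sqcup\{v_e\}\sqcup V^{\rm int}(G')$, observe that the old vertices keep their types and contribute $\xi_G$ and $\xi_{G'}$ under \eqref{eq:direct}, and compute the contribution of $v_e$ in the two orientation cases as $\mu(t')-\mu(\bar e_2)=\mu(e)$ and $\mu(e_1)-\mu(t')=\mu(e)$. The only difference is that you spell out the splicing of $\partial D_{G'}$ into $\partial D_G$, which the paper leaves implicit.
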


\begin{proof}
We have a natural identification $V^{\rm int}(G^{\prime \prime})\cong V^{\rm int}(G)\sqcup \{ v_e\} \sqcup V^{\rm int}(G^{\prime})$.
Observe that this identification respects the type of vertices.
With the direct sum decomposition (\ref{eq:direct}) in mind, we see that $V^{\rm int}(G)$ and $V^{\rm int}(G^{\prime})$ contribute to $\xi_{G^{\prime \prime}}$ as $\xi_G$ and $\xi_{G^{\prime}}$, respectively.

We compute the contribution from $v_e$.
Let $t^{\prime}\in E^{\rm ori}_{v_e}(G^{\prime \prime})$ be an edge coming from the tail of $G^{\prime}$.
The homology marking of $t^{\prime}$ is trivial.
If $e$ has the preferred orientation, we see that the contribution is $\mu(t^{\prime})-\mu(\bar{e}_2)=\mu(e)$.
Otherwise, the contribution is $\mu(e_1)-\mu(t^{\prime})=\mu(e)$.
This completes the proof.
\end{proof}

We next show a formula describing how $\xi_G$ changes under a special kind of flip.
Let $G$ be a trivalent fatgraph spine of $\Sigma_{g,1}$.
We use the following notation.
\begin{itemize}
\item We denote by $t$ the tail of $G$, and give it the preferred orientation.
\item $e_1\in E^{\rm ori}(G)$ is the oriented edge next to $t$ in the total ordering given to $E^{\rm ori}(G)$.
\item $v_1$ and $v_2$ are the start and end points of $e_1$, respectively.
\item $b,c\in E^{\rm ori}_{v_2}(G)$ are the edges such that $e_1$, $b$, and $c$ are in this order in the cyclic ordering given to $E^{\rm ori}_{v_2}(G)$.
\end{itemize}

The situation is illustrated in Figure \ref{fig:tail_slide}. 
We call the flip along (the unoriented edge underlying) $e_1$ the \emph{tail slide} to $G$.

\begin{prop}[tail slide formula]
\label{prop:tail_slide}
Let $G^{\prime}$ be the result of the tail slide to $G$. Then we have
$$
\xi_{G^{\prime}}=\xi_G+\mu(c).
$$
\end{prop}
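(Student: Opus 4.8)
The plan is to evaluate $\xi$ on the single flip underlying the tail slide. By definition the tail slide is the flip $W_{e_1}$, so Proposition~\ref{prop:char_xi_G}(2) gives at once
$$
\xi_{G^{\prime}}-\xi_G = 2j^{\prime}(W_{e_1})-m(W_{e_1}),
$$
and it suffices to compute $j^{\prime}$ and $m$ on this particular flip, for which we may use the labels of Figure~\ref{fig:flip} with $e=e_1$.

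The first step is that $j^{\prime}(W_{e_1})=0$, almost for free. The tail $t$ is incident to $v_1$, hence it is one of the four outer edges $a,b,c,d$ of the flip square, and as in the proof of the gluing formula its homology marking vanishes, $\mu(t)=0$ (cutting $\Sigma_{g,1}$ along $G$, the two sides of $t$ are separated on one side only by the corner at the univalent vertex $p$). Now $j(W_{e_1})=\mu(a)\wedge\mu(b)\wedge\mu(c)\in\Lambda^3H$ vanishes in every case: if the tail is one of $a,b,c$ this wedge has a zero factor, and if the tail is $d$ then $\mu(a)+\mu(b)+\mu(c)=-\mu(d)=0$ makes $\mu(a),\mu(b),\mu(c)$ linearly dependent. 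Hence $j^{\prime}(W_{e_1})=C(j(W_{e_1}))=0$ and
$$
\xi_{G^{\prime}}-\xi_G = -\,m(W_{e_1})=-\bigl(\mu(a)+\mu(c)\bigr).
$$

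It remains to rewrite $-(\mu(a)+\mu(c))$ as $\mu(c)$ in the notation of the statement. For this I would: (i) determine which of the outer edges is the tail, using that $t$ is the $\prec$-minimal oriented edge and that $e_1$ is its $\prec$-successor --- this forces a definite position of $t$ in the square; and (ii) match the cyclic ordering of $E^{\mathrm{ori}}_{v_2}(G)$ in the statement with that of Figure~\ref{fig:flip}. Feeding these identifications, together with the relation $\sum_{e\in E^{\mathrm{ori}}_{v_2}(G)}\mu(e)=0$ (and, if it is the edge $d$ that is the tail, also the relation $\mu(a)+\mu(b)+\mu(c)+\mu(d)=0$), into $m(W_{e_1})=\mu(a)+\mu(c)$ collapses the right-hand side to $-\mu(c)$ for the edge $c\in E^{\mathrm{ori}}_{v_2}(G)$ named in the proposition. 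One can also bypass $j^{\prime}$ and $m$ and argue straight from Theorem~\ref{thm:xi_explicit}: a flip along $e_1$ changes the total order $\prec$ only near $e_1$, so among trivalent vertices only $v_1,v_2$ and their images $v_1^{\prime},v_2^{\prime}$ can contribute differently; listing the types of these four vertices --- where one uses crucially that, $t$ being $\prec$-minimal, the distinguished incoming edge at $v_1$ (and at $v_1^{\prime}$) is $t$ itself --- and summing $\mu(e_v)-\mu(f_v)$ over them yields a telescoping expression equal to $\mu(c)$, with the contributions that depend on the rest of $G$ cancelling between $G$ and $G^{\prime}$.

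The step I expect to be the main obstacle is exactly this last bookkeeping: carefully tracking the behaviour of the total order $\prec$ and of the vertex types in a neighbourhood of the tail, and verifying that the apparent dependence on the global combinatorics of $G$ cancels, so that only the local quantity $\mu(c)$ remains. The rest --- the reduction via Proposition~\ref{prop:char_xi_G}(2), the vanishing of $j^{\prime}(W_{e_1})$, and the input $\mu(t)=0$ --- is formal.
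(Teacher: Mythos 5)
Your argument is correct but takes a genuinely different route from the paper. The paper proves the tail slide formula exactly by the ``alternative'' you mention at the end: it applies the explicit formula of Theorem~\ref{thm:xi_explicit} to Figure~\ref{fig:tail_slide}, writes $\xi_{G'}-\xi_G$ as the difference of the four vertex contributions at $v_1,v_2,v_1',v_2'$, and simplifies using $\mu(t)=0$ and the vertex relations to get $\mu(c)$ (with a short case split according to whether $b\prec c$ or $c\prec b$). Your primary route instead goes through Proposition~\ref{prop:char_xi_G}(2), and your key observation --- that $j(W_{e_1})=\mu(a)\wedge\mu(b)\wedge\mu(c)$ vanishes for \emph{any} flip whose square contains the tail, since either one factor is $\mu(t)=0$ or the three factors sum to $-\mu(t)=0$ --- is clean, correct, and not in the paper; it reduces the whole statement to the single local evaluation $\xi_{G'}-\xi_G=-m(W_{e_1})$. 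The bookkeeping you defer does close, and is shorter than you fear: the summand of $m(W_{e_1})=\mu(a)+\mu(c)$ coming from $v_1$ is the marking of the non-tail outer edge $x\in E^{\rm ori}_{v_1}(G)$, which equals $-\mu(t)-\mu(\bar e_1)=\mu(e_1)$ by the vertex relation at $v_1$, and its diagonal partner at $v_2$ is $b$; hence $m(W_{e_1})=\mu(e_1)+\mu(b)=-\mu(c)$ by the vertex relation at $v_2$, giving $\xi_{G'}-\xi_G=\mu(c)$. The one point requiring care is matching the cyclic orderings at $v_1$ and $v_2$ with those of Figure~\ref{fig:flip} so as to pick the correct diagonal pair in $m$ (the wrong match would produce $-\mu(c)$ or $\pm\mu(b)$). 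What your approach buys is conceptual clarity --- it isolates exactly where the tail enters ($\mu(t)=0$ kills $j'$ and trivializes one summand of $m$) and avoids the type-$1$/type-$2$ analysis; what the paper's buys is that it needs nothing beyond Theorem~\ref{thm:xi_explicit} and a single picture.
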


\begin{proof}
We work with Figure \ref{fig:tail_slide}.
Suppose $b\prec c$ in $E^{\rm ori}(G)$.
For simplicity, we write $e$ instead of $\mu(e)$ for $e\in E^{\rm ori}(G)$.
Then we compute
\begin{align*}
\xi_{G^{\prime}}-\xi_G &=(e_{v^{\prime}_1}-f_{v^{\prime}_1})+(e_{v^{\prime}_2}-f_{v^{\prime}_2})-(e_{v_1}-f_{v_1})-(e_{v_2}-f_{v_2}) \\
&=(b-(-b))+(c-(-b-c))-(b+c-(-b-c))-(b-c) \\
&=c.
\end{align*}
The case where $c\prec b$ can be computed similarly.
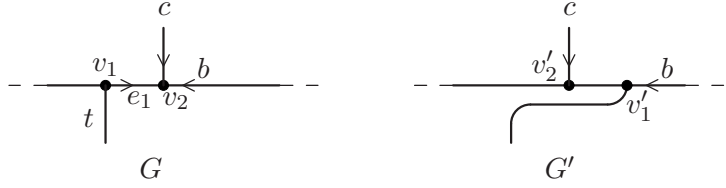
\begin{figure}
\begin{center}
\caption{Tail slide}
\label{fig:tail_slide}
\unitlength 0.1in
\begin{picture}( 37.0000,  8.5000)(  2.0000, -8.3300)
%
{\color[named]{Black}{%
\special{pn 8}%
\special{pa 400 500}%
\special{pa 200 500}%
\special{da 0.070}%
}}%
%
{\color[named]{Black}{%
\special{pn 8}%
\special{pa 1600 500}%
\special{pa 1800 500}%
\special{da 0.070}%
}}%
%
{\color[named]{Black}{%
\special{pn 8}%
\special{pa 2500 500}%
\special{pa 2300 500}%
\special{da 0.070}%
}}%
%
{\color[named]{Black}{%
\special{pn 8}%
\special{pa 3700 500}%
\special{pa 3900 500}%
\special{da 0.070}%
}}%
\put(8.1000,-6.2000){\makebox(0,0)[lb]{$e_1$}}%
%
{\color[named]{Black}{%
\special{pn 8}%
\special{pa 1000 400}%
\special{pa 1030 340}%
\special{fp}%
\special{pa 1000 400}%
\special{pa 970 340}%
\special{fp}%
}}%
%
{\color[named]{Black}{%
\special{pn 8}%
\special{pa 1100 500}%
\special{pa 1160 470}%
\special{fp}%
\special{pa 1100 500}%
\special{pa 1160 530}%
\special{fp}%
}}%
%
{\color[named]{Black}{%
\special{pn 13}%
\special{pa 400 500}%
\special{pa 1600 500}%
\special{fp}%
}}%
%
{\color[named]{Black}{%
\special{pn 13}%
\special{pa 2500 500}%
\special{pa 3700 500}%
\special{fp}%
}}%
%
{\color[named]{Black}{%
\special{pn 4}%
\special{sh 1}%
\special{ar 1000 500 26 26 0  6.28318530717959E+0000}%
}}%
%
{\color[named]{Black}{%
\special{pn 4}%
\special{sh 1}%
\special{ar 700 500 26 26 0  6.28318530717959E+0000}%
}}%
%
{\color[named]{Black}{%
\special{pn 4}%
\special{sh 1}%
\special{ar 3100 500 26 26 0  6.28318530717959E+0000}%
}}%
%
{\color[named]{Black}{%
\special{pn 13}%
\special{pa 3100 500}%
\special{pa 3100 200}%
\special{fp}%
}}%
%
{\color[named]{Black}{%
\special{pn 13}%
\special{pa 1000 500}%
\special{pa 1000 200}%
\special{fp}%
}}%
%
{\color[named]{Black}{%
\special{pn 13}%
\special{pa 700 800}%
\special{pa 700 500}%
\special{fp}%
}}%
%
{\color[named]{Black}{%
\special{pn 13}%
\special{pa 2800 800}%
\special{pa 2800 700}%
\special{fp}%
}}%
%
{\color[named]{Black}{%
\special{pn 13}%
\special{ar 2900 700 100 100  3.1415927 4.7123890}%
}}%
%
{\color[named]{Black}{%
\special{pn 13}%
\special{pa 2900 600}%
\special{pa 3300 600}%
\special{fp}%
}}%
%
{\color[named]{Black}{%
\special{pn 13}%
\special{ar 3300 500 100 100  6.2831853 6.2831853}%
\special{ar 3300 500 100 100  0.0000000 1.5707963}%
}}%
%
{\color[named]{Black}{%
\special{pn 4}%
\special{sh 1}%
\special{ar 3400 500 26 26 0  6.28318530717959E+0000}%
}}%
%
{\color[named]{Black}{%
\special{pn 8}%
\special{pa 3100 400}%
\special{pa 3130 340}%
\special{fp}%
\special{pa 3100 400}%
\special{pa 3070 340}%
\special{fp}%
}}%
%
{\color[named]{Black}{%
\special{pn 8}%
\special{pa 3500 500}%
\special{pa 3560 470}%
\special{fp}%
\special{pa 3500 500}%
\special{pa 3560 530}%
\special{fp}%
}}%
\put(30.7000,-1.3500){\makebox(0,0)[lb]{$c$}}%
\put(35.7500,-4.6000){\makebox(0,0)[lb]{$b$}}%
\put(11.7500,-4.6000){\makebox(0,0)[lb]{$b$}}%
\put(8.7500,-9.8500){\makebox(0,0)[lb]{$G$}}%
\put(29.7500,-9.8500){\makebox(0,0)[lb]{$G^{\prime}$}}%
\put(5.8500,-7.1500){\makebox(0,0)[lb]{$t$}}%
\put(6.3000,-4.5000){\makebox(0,0)[lb]{$v_1$}}%
\put(9.9500,-6.2500){\makebox(0,0)[lb]{$v_2$}}%
\put(33.9500,-6.8500){\makebox(0,0)[lb]{$v_1^{\prime}$}}%
\put(29.0500,-4.7500){\makebox(0,0)[lb]{$v_2^{\prime}$}}%
\put(9.7000,-1.3500){\makebox(0,0)[lb]{$c$}}%
%
{\color[named]{Black}{%
\special{pn 8}%
\special{pa 840 500}%
\special{pa 780 470}%
\special{fp}%
\special{pa 840 500}%
\special{pa 780 530}%
\special{fp}%
}}%
\end{picture}%
\end{center}
\vspace{0.3cm}
\end{figure}
\end{proof}

As an application of Proposition \ref{prop:tail_slide}, we can extend the definition of our invariant to trivalent fatgraph spines of a \emph{once punctured} surface.
Let $\Sigma_g^1$ be a surface obtained from $\Sigma_{g,1}$ by gluing a once punctured disk along the boundaries.
We regard $\Sigma_{g,1}$ as a subset of $\Sigma_g^1$.
By definition, a fatgraph spine of $\Sigma_g^1$ is an embedding $\iota\colon G\hookrightarrow \Sigma_g^1$ of a fatgraph $G$ into $\Sigma_g^1$ satisfying the first two conditions in Definition \ref{dfn:fgs} (with $\Sigma_{g,1}$ replaced by $\Sigma_g^1$), and the condition that all vertices have valency greater than $2$.

Let $G$ be a trivalent fatgraph spine of $\Sigma_g^1$.
By a suitable isotopy, we arrange that $G\subset \Sigma_{g,1}$.
Let $e\in E^{\rm ori}(G)$.
Take a simple arc $\ell$ on $\Sigma_{g,1}$ starting from $p$, reaching $v_e$ from the right, and disjoint from $G\setminus \{v_e\}$.
We say that such an arc $\ell$ is \emph{admissible} for $e$.
Regarding $v_e$ as a newly created trivalent vertex, we can consider the union $\widetilde{G}(e,\ell)=G\cup \ell$ as a trivalent fatgraph spine of $\Sigma_{g,1}$.
The arc $\ell$ becomes the tail of $\widetilde{G}(e,\ell)$.

\begin{cor}
\label{cor:punctured}
Keep the notation as above.
Then the element
$$
\xi_{\widetilde{G}(e,\ell)}-\mu(e)
$$
does not depend on the choice of $e$ and $\ell$.
In particular, for a trivalent fatgraph spine $G\subset \Sigma_g^1$, we can define $\xi_G\in H=H_1(\Sigma_{g,1};\mathbb{Z}) \cong H_1(\Sigma_g^1;\mathbb{Z})$ as
$$
\xi_G:=\xi_{\widetilde{G}(e,\ell)}-\mu(e).
$$
\end{cor}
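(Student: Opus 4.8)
The plan is to deduce the statement from the tail slide formula, Proposition~\ref{prop:tail_slide}, together with the vertex relation $\sum_{f\in E^{\rm ori}_v(G)}\mu(f)=0$. The three things to check are: that $\xi$ does not see the choice of $\ell$; that all the spines $\widetilde{G}(e,\ell)$ are joined to one another by tail slides; and that a single tail slide changes $\xi$ by exactly the change in $\mu(e)$.

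First I would settle independence of $\ell$. Fix $e\in E^{\rm ori}(G)$. The complement $\Sigma_{g,1}\setminus G$ is an open annulus, and an admissible arc $\ell$ for $e$ is, after isotopy, a properly embedded arc in it joining the two boundary circles, with its two endpoints prescribed (at $p$ on one circle and, on the other, the point determined by the right side of $e$). Hence any two admissible arcs $\ell,\ell'$ differ by a power of the Dehn twist $T$ about $\partial\Sigma_{g,1}$, performed in a collar of $\partial\Sigma_{g,1}$ disjoint from $G$, so $\widetilde{G}(e,\ell')=T^{k}(\widetilde{G}(e,\ell))$ for some $k\in\mathbb{Z}$. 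Since $[\partial\Sigma_{g,1}]=0$ in $H$, the class $T$ acts trivially on $H$, and the $\mathcal{M}_{g,1}$-equivariance of $\xi$ (Proposition~\ref{prop:char_xi_G}(1)) gives $\xi_{\widetilde{G}(e,\ell')}=\xi_{\widetilde{G}(e,\ell)}$. Thus $\xi_{\widetilde{G}(e,\ell)}$ depends only on $e$; write $\widetilde{G}(e)$ for this spine and set $\Xi(e):=\xi_{\widetilde{G}(e)}-\mu(e)\in H$. It remains to show that $\Xi$ is constant on $E^{\rm ori}(G)$.

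Next I would join the $\widetilde{G}(e)$ by tail slides. As one slides the foot of the tail continuously along $G$, it moves around the single boundary circle $\partial N(G)$ of a regular neighbourhood of $G$; this circle runs exactly once along the side of $e$ corresponding to each $e\in E^{\rm ori}(G)$, with exactly one vertex of $G$ between consecutive such sides. Moving the foot across one of these vertices is a flip along the half of the current edge adjacent to the tail, i.e.\ a tail slide in the sense of Proposition~\ref{prop:tail_slide} (this half is never a loop, so the move is always defined); it carries $\widetilde{G}(e)$ to $\widetilde{G}(e')$ with $e'$ the oriented edge whose side comes next along $\partial N(G)$. Since $\partial N(G)$ visits every side exactly once, each $\widetilde{G}(e)$ is reached from a fixed one by a chain of tail slides, so it is enough to prove that one tail slide preserves $\Xi$.

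Finally, suppose a tail slide across a vertex $w$ carries $\widetilde{G}(e)$ to $\widetilde{G}(e')$. Proposition~\ref{prop:tail_slide} gives $\xi_{\widetilde{G}(e')}-\xi_{\widetilde{G}(e)}=\mu(c)$, where $c$ is the edge at $w$ singled out by the cyclic order there, so I must check $\mu(c)=\mu(e')-\mu(e)$. Writing the three oriented edges of $G$ pointing toward $w$ as $g_1,g_2,g_3$ in the cyclic order coming from the fatgraph structure, so that $\mu(g_1)+\mu(g_2)+\mu(g_3)=0$, the circle $\partial N(G)$ enters $w$ along the side of one of them and leaves along the side of the next, which identifies $e$ and $e'$ (with their orientations) among the $g_i$ and $\bar g_i$; reading off $c$ from Figure~\ref{fig:tail_slide} then identifies it, with the correct orientation, with the one built from the remaining $g_i$, and the vertex relation gives $\mu(c)=\mu(e')-\mu(e)$. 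Hence $\Xi(e')=\Xi(e)$, so $\Xi$ is constant and we may set $\xi_G$ equal to this common value, which is $\xi_{\widetilde{G}(e,\ell)}-\mu(e)$ for every admissible $(e,\ell)$. I expect the genuine obstacle to be precisely in this last step: matching, in each cyclic configuration at $w$, the edge $c$ of the tail slide formula and the orientations of $e$ and $e'$ viewed as sides of edges of $G$ against the conventions of Figures~\ref{fig:flip}, \ref{fig:tail_slide} and \ref{fig:6cases}, and thereby fixing the signs; everything else is formal once Proposition~\ref{prop:tail_slide} and the triviality of the boundary twist on $H$ are granted.
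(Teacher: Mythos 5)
Your proposal is correct and follows essentially the same route as the paper's proof: independence of $\ell$ comes from the boundary Dehn twist acting trivially on $H$ together with the $\mathcal{M}_{g,1}$-equivariance of $\xi$, and independence of $e$ comes from joining consecutive oriented edges in the cyclic order by a single tail slide and combining Proposition \ref{prop:tail_slide} with the vertex relation $\mu(c)+\mu(e)=\mu(e^{\prime})$ at the shared vertex. The orientation bookkeeping you flag as the main obstacle is exactly the identification the paper makes, so there is no gap.
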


\begin{proof}
Let $\ell^0$ be another admissible arc for $e$. Then $\ell^0$ is isotopic to the concatenation of some power of a simple based loop parallel to $\partial \Sigma_{g,1}$ and $\ell$.
This implies that $\widetilde{G}(e,\ell^0)$ is obtained from $\widetilde{G}(e,\ell)$ by application of some power of the Dehn twist along $\partial \Sigma_{g,1}$.
Since the Dehn twist along $\partial \Sigma_{g,1}$ acts on $H$ trivially, we have $\xi_{\widetilde{G}(e,\ell)}=\xi_{\widetilde{G}(e,\ell^0)}$.
Hence $\xi_{\widetilde{G}(e,\ell)}-\mu(e)$ does not depend on the choice of $\ell$.

Now, we can give a \emph{cyclic} ordering to the set $E^{\rm ori}(G)$ by a way similar to the case where $G\subset \Sigma_{g,1}$ as in Definition \ref{dfn:total_order}.
Suppose that $e,e^{\prime}\in E^{\rm ori}(G)$ are consecutive in this cyclic ordering.
Fix an admissible arc $\ell$ for $e$.
Let $v_0$ be the vertex of $G$ shared by $e$ and $e^{\prime}$, and let $c\in E^{\rm ori}_{v_0}(G)$ be an edge other than $e$ and $\bar{e}^{\prime}$.
We denote by $e_0$ an unoriented edge of $\widetilde{G}(e,\ell)$ with end points $v_e$ and $v_0$.

Let $\widetilde{G}^{\prime}$ be the result of flip along $e_0$.
Then $\widetilde{G}^{\prime}$ can be identified with $\widetilde{G}(e^{\prime},\ell^{\prime})$, where $\ell^{\prime}$ corresponds to the tail of $\widetilde{G}^{\prime}$.
By Proposition \ref{prop:tail_slide}, we have $\xi_{\widetilde{G}(e^{\prime},\ell^{\prime})}=\xi_{\widetilde{G}(e,\ell)}+\mu(c)$.
Since $\mu(c)+\mu(e)=\mu(e^{\prime})$, we obtain $\xi_{\widetilde{G}(e^{\prime},\ell^{\prime})}-\mu(e^{\prime})=\xi_{\widetilde{G}(e,\ell)}-\mu(e)$.
This shows that $\xi_{\widetilde{G}(e,\ell)}-\mu(e)$ does not depend on the choice of $e$, either.
\end{proof}

\section{Non-triviality and primitivity}
\label{sec:non-trivial}

In this section, we first prove that the invariant $\xi_G$ is non-trivial.
Then we consider the primitivity of $\xi_G$ and present some partial results.

Consider the mod $2$ reduction of $\xi_G$:
$$
\xi^2_G:=\xi_G\otimes (1 {\rm \ mod\ } 2)\in H\otimes \mathbb{Z}_2\cong H_1(\Sigma_{g,1};\mathbb{Z}_2).
$$
Hereafter, $\equiv$ stands for an equality in $H\otimes \mathbb{Z}_2$.
Since $\mu(\bar{e})=-\mu(e)\equiv \mu(e)\in H\otimes \mathbb{Z}_2$ for any $e\in E^{\rm ori}(G)$, the homology marking $\mu$ induces a well-defined map $\mu^2\colon E(G)\to H\otimes \mathbb{Z}_2$.
We call $\mu^2$ the \emph{mod $2$ homology marking}.

\begin{prop}
\label{prop:mod2_formula}
Let $G$ be a trivalent fatgraph spine of $\Sigma_{g,1}$.
Then we have
$$
\xi^2_G=\sum_{e\in E(G)} \mu^2(e).
$$
\end{prop}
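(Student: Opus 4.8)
The starting point is the explicit formula of Theorem~\ref{thm:xi_explicit}. Reducing $\xi_G=\sum_v(\mu(e_v)-\mu(f_v))$ modulo $2$ and using $-x\equiv x$, one gets $\xi^2_G\equiv \sum_v\bigl(\mu^2(e_v)+\mu^2(f_v)\bigr)$, the sum being over all trivalent vertices $v$. For each such $v$, the unoriented edges underlying $e_v$ and $f_v$ form a $2$-element subset of the three edges incident to $v$; write $g_v$ for the remaining one. Property (3) of the homology marking, read at $v$, reduces modulo $2$ to $\sum_{e\ni v}\mu^2(e)\equiv 0$ (here one uses that a trivalent fatgraph spine has no loop edge, since the complement of the spine is a disk, so the three edges at $v$ are distinct). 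Hence $\mu^2(e_v)+\mu^2(f_v)\equiv\mu^2(g_v)$ and $\xi^2_G\equiv\sum_v\mu^2(g_v)$.

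The next step is to identify $g_v$. Orient every edge of $G$ by its preferred orientation (Definition~\ref{dfn:total_order}), so that each edge acquires a head and a tail; as observed in the proof of Proposition~\ref{prop:type1_2}, a trivalent vertex of type $i$ is the head of exactly $i$ of its incident edges ($i=1,2$). Unwinding the definitions of $e_v$, $f_v$ and of the two types from the ordering chains in Definition~\ref{dfn:total_order}, one checks that $\{e_v,f_v\}$ consists precisely of the edges of which $v$ is the tail when $v$ is of type $1$, and of the edges of which $v$ is the head when $v$ is of type $2$. Consequently $g_v$ is the unique edge with head $v$ when $v$ is of type $1$, and the unique edge with tail $v$ when $v$ is of type $2$.

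It then remains to compare $\sum_v\mu^2(g_v)$ with $\sum_{e\in E(G)}\mu^2(e)$. Since type-$1$ (resp. type-$2$) vertices correspond bijectively, via $v\mapsto g_v$, to the edges whose head (resp. tail) is a type-$1$ (resp. type-$2$) vertex, I rewrite $\sum_v\mu^2(g_v)=\sum_{e\,:\,h(e)\text{ of type }1}\mu^2(e)+\sum_{e\,:\,t(e)\text{ of type }2}\mu^2(e)$, where $h(e),t(e)$ denote the head and tail of $e$. Splitting $\sum_{e}\mu^2(e)$ according to the type of $h(e)$ — and using that the only edge with a univalent endpoint is the tail $t$, for which Property (3) read at the univalent vertex gives $\mu(t)=0$, hence $\mu^2(t)\equiv 0$ — the two contributions of the type-$1$ heads cancel modulo $2$, and one is left with
$$\sum_{e\in E(G)}\mu^2(e)+\sum_v\mu^2(g_v)\equiv \sum_{e\,:\,h(e)\text{ of type }2}\mu^2(e)+\sum_{e\,:\,t(e)\text{ of type }2}\mu^2(e)=\sum_{v\text{ of type }2}\ \sum_{e\ni v}\mu^2(e).$$
Each inner sum vanishes modulo $2$ by the vertex relation, so the right-hand side is $0$, which gives $\xi^2_G\equiv\sum_{e\in E(G)}\mu^2(e)$, as desired.

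The part I expect to cost the most work is the identification of $g_v$ in the second paragraph: it forces one to keep straight simultaneously the total order $\prec$, the cyclic order at $v$, the notion of preferred orientation, and the type of $v$. Everything else is formal bookkeeping once that dictionary, together with the two facts $\mu(t)=0$ and ``no loops at interior vertices'', is in place.
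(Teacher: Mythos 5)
Your proof is correct and follows essentially the same route as the paper's: starting from Theorem \ref{thm:xi_explicit}, reducing each vertex contribution mod $2$ via the vertex relation $\sum_{e\in E^{\rm ori}_v(G)}\mu(e)=0$, and then re-indexing the resulting sum over edges by an incidence count. The only difference is cosmetic: the paper expresses the contribution of a vertex $v$ directly as the sum of $\mu^2$ over the preferably oriented edges pointing \emph{into} $v$ (one edge for type $1$, two for type $2$), which makes the final re-indexing immediate, whereas you trade the two incoming edges at a type-$2$ vertex for the single outgoing edge $g_v$ and then must undo that substitution with the extra cancellation at the end; your auxiliary observations (a trivalent fatgraph spine of $\Sigma_{g,1}$ has no loop edges, and $\mu(t)=0$ for the tail) are correct and are implicitly used by the paper as well.
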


\begin{proof}
Let $v\in V^{\rm int}(G)$.
We work with Figure \ref{fig:2types} and count preferably oriented edges toward $v$.
By abuse of notation, we use the same letter for an oriented edge and its underlying unoriented edge.
If $v$ is of type 1, only $e_1$ has the preferred orientation.
Since $\mu(e_1)+\mu(e_2)+\mu(e_3)=0$, we have
$$
\mu(e_v)-\mu(f_v)=\mu(e_2)-\mu(e_3)\equiv \mu(e_1).
$$
If $v$ is of type 2, $e_1$ and $e_3$ have the preferred orientation and $e_2$ does not.
Then we have
$$
\mu(e_v)-\mu(f_v)=\mu(e_1)-\mu(e_3)\equiv \mu(e_1)+\mu(e_3).
$$
Therefore, we have
\begin{align*}
\xi^2_G &=\sum_{v\in V^{\rm int}(G)}
\left( \begin{array}{l} {\rm the\ sum\ of\ the\ mod\ } 2 {\rm \ homology\ markings} \\
{\rm of\ preferably\ oriented\ edges\ toward\ } v \end{array} \right) \\
&= \sum_{e\in E(G)} \mu^2(e).
\end{align*}
The last equality holds since any preferably oriented edge of $G$ points to some trivalent vertex of $G$.
\end{proof}

\begin{thm}
\label{thm:non-trivial_b}
Let $G$ be a trivalent fatgraph spine of $\Sigma_{g,1}$.
Then the mod $2$ reduction $\xi^2_G$ is non-trivial.
In particular, we have $\xi_G\neq 0$.
\end{thm}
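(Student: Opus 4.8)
The plan is to detect $\xi^2_G$ by pairing it against the mod $2$ intersection form, using the explicit description $\xi^2_G=\sum_{e\in E(G)}\mu^2(e)$ from Proposition~\ref{prop:mod2_formula}. Write $(\ \cdot\ )$ also for the mod $2$ reduction of the intersection pairing on $H\otimes\mathbb{Z}_2\cong H_1(\Sigma_{g,1};\mathbb{Z}_2)$. Since $(0\cdot\alpha)=0$ for every class $\alpha$, it is enough to produce a single $\alpha$ with $(\xi^2_G\cdot\alpha)\neq 0$, and $\alpha$ will be the class of a cycle in $G$ itself.

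Recall that $\mu(e)$ is represented by the simple loop $\hat e$, which meets $G$ transversally in the single point $m_e$, the midpoint of $e$. Let $\gamma$ be a $1$-cycle with $\mathbb{Z}_2$-coefficients supported on a set $\mathrm{supp}(\gamma)\subseteq E(G)$ of edges, and let $[\gamma]\in H_1(\Sigma_{g,1};\mathbb{Z}_2)$ be its class. Pushing $\gamma$ slightly off $G$ to make it transverse to $\hat e$, one sees that $\hat e$ is disjoint from $\gamma$ when $e\notin\mathrm{supp}(\gamma)$ and meets it in exactly one point (near $m_e$) when $e\in\mathrm{supp}(\gamma)$; hence $(\mu^2(e)\cdot[\gamma])$ is $1$ or $0$ in $\mathbb{Z}_2$ according as $e$ does or does not lie in $\mathrm{supp}(\gamma)$. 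Summing over $E(G)$ shows that $(\xi^2_G\cdot[\gamma])$ is the mod $2$ reduction of $\#\mathrm{supp}(\gamma)$. Therefore it suffices to find a cycle in $G$ with an odd number of edges, i.e.\ to know that $G$ is not bipartite.

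Finally I would argue that a trivalent fatgraph spine of $\Sigma_{g,1}$ is never bipartite. If $G$ has a loop edge, that edge is already a cycle of odd length. Otherwise suppose $G$ were bipartite with vertex parts $A\sqcup B$, and let $B$ be the part not containing the unique univalent vertex $p$, so that every vertex of $B$ is trivalent. As $G$ is loopless, each edge has exactly one endpoint in $B$, whence $|E(G)|=\sum_{v\in B}\mathrm{val}(v)=3\lvert B\rvert$. But $G$ has $6g-1$ edges (as recorded in the proof of Proposition~\ref{prop:type1_2}) and $6g-1\equiv 2\pmod 3$, a contradiction. Hence $G$ contains a cycle $\gamma$ of odd length, and then $(\xi^2_G\cdot[\gamma])=1$, so $\xi^2_G\neq 0$ and a fortiori $\xi_G\neq 0$.

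The only delicate step is the transversality computation of $(\mu^2(e)\cdot[\gamma])$; here one must check that perturbing $\gamma$ off $G$ creates no intersection with $\hat e$ away from $m_e$, which is immediate since $\hat e\cap G=\{m_e\}$. The remaining ingredients — Proposition~\ref{prop:mod2_formula}, the identification of $H_1(G)$ with $H_1(\Sigma_{g,1})$, and the non-bipartiteness of $G$ — are either already available or a one-line count, so I anticipate no real obstacle.
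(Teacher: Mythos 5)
Your proof is correct, and its overall skeleton is the same as the paper's: apply Proposition~\ref{prop:mod2_formula}, produce an edge cycle of odd length in $G$, and pair $\xi^2_G$ against its class using the fact that $\hat e$ meets $G$ exactly once, at the midpoint of $e$. The genuine difference is in how you obtain the odd cycle. The paper's Lemma~\ref{lem:odd_edge_cycle} numbers the $3(4g-1)$ corners consecutively along $\partial D_G$ and argues that, since the numbers $6g-1$ of odd-indexed and $6g-2$ of even-indexed corners are not divisible by $3$, some vertex carries two corners of opposite parity, whence an odd edge path closing up at that vertex. You instead show $G$ is not bipartite by pure valence counting: a loop is an odd cycle outright, and in a loopless bipartition $A\sqcup B$ with the univalent vertex placed in $A$, every vertex of $B$ is trivalent and every edge meets $B$ exactly once, forcing $3\mid |E(G)|=6g-1$, which is false. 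Both arguments ultimately exploit $3\nmid 6g-1$, but yours is more elementary and does not use the fatgraph/embedding data at all (no cyclic orderings, no total order on $E^{\rm ori}(G)$) --- it applies to any connected graph with one univalent and otherwise trivalent vertices and first Betti number $2g$ --- whereas the paper's corner count is of a piece with the combinatorics it uses elsewhere (e.g.\ the type decomposition of vertices). The one point worth stating carefully in your version is the passage from ``not bipartite'' to ``a $\mathbb{Z}_2$-cycle with odd support'': an odd closed walk contains an odd circuit, which is a cycle-space element whose support has odd cardinality, and that is what your pairing computation needs.
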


To prove this theorem, we need the following lemma.

\begin{lem}
\label{lem:odd_edge_cycle}
Let $G$ be a trivalent fatgraph spine of $\Sigma_{g,1}$.
Then $G$ contains an edge cycle of odd length.
\end{lem}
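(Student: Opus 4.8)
The plan is to prove the lemma by an elementary parity count on the edges of $G$, using no information about the embedding into the surface. Suppose, for contradiction, that $G$ contains no edge cycle of odd length. By the standard characterisation of bipartite graphs, $G$ is then bipartite, so $V(G)$ admits a partition $V(G)=A\sqcup B$ in which every edge of $G$ joins a vertex of $A$ to a vertex of $B$. (In particular $G$ then has no loop, since a loop is an odd cycle of length one; had $G$ contained a loop we would already be done.)

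Next I would invoke two facts. First, by Definition \ref{dfn:fgs}, the vertex $\iota^{-1}(p)$ is the unique univalent vertex of $G$ and every other vertex is trivalent. Second, as recorded in the proof of Proposition \ref{prop:type1_2}, a trivalent fatgraph spine of $\Sigma_{g,1}$ has $4g-1$ trivalent vertices, hence $(1+3(4g-1))/2 = 6g-1$ edges. Say $\iota^{-1}(p)\in A$. In a bipartite graph every edge has exactly one endpoint in $A$, so summing valencies over the vertices of $A$ counts each edge of $G$ exactly once, and therefore
$$
6g-1 = |E(G)| = \sum_{v\in A}(\text{valency of } v) = 1 + 3(|A|-1) \equiv 1 \pmod 3 .
$$
Since $6g-1\equiv 2 \pmod 3$, this is a contradiction, and so $G$ must contain an edge cycle of odd length.

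I do not anticipate any serious obstacle: the argument is purely combinatorial, and the only inputs are the defining shape of $G$ (one univalent vertex, all others trivalent) and the resulting edge count $|E(G)|=6g-1$. For the subsequent use in Theorem \ref{thm:non-trivial_b}, I would observe that if $\gamma\subset G$ is an edge cycle of odd length $k$, then each dual loop $\hat{e}$ meets $\gamma$ transversally in one point when $e$ lies on $\gamma$ and in no point otherwise, so the mod $2$ intersection number of $[\gamma]$ with $\sum_{e\in E(G)}\mu^2(e)=\xi_G^2$ (Proposition \ref{prop:mod2_formula}) equals $k\equiv 1$; hence $\xi_G^2\neq 0$.
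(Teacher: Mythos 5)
Your argument is correct, and it takes a genuinely different route from the paper. The paper works with the fatgraph structure: it labels the $3(4g-1)$ corners along the boundary traversal of the ribbon graph, observes that neither $6g-1$ nor $6g-2$ is divisible by $3$, finds two corners of opposite parity around a common vertex, and extracts an odd edge cycle that follows consecutive oriented edges. You instead ignore the embedding entirely and argue by contradiction from bipartiteness: if $G$ had no odd edge cycle it would be bipartite (loops being disposed of first), and then counting edges by summing valencies over one part gives $6g-1=3|A|-2\equiv 1 \pmod 3$, contradicting $6g-1\equiv 2\pmod 3$. Both proofs are ultimately mod~$3$ arguments resting on the degree sequence (one univalent vertex, $4g-1$ trivalent ones), but yours is purely graph-theoretic and shorter, while the paper's produces an odd cycle of a more specific kind (a segment of the boundary traversal), which is in the spirit of its later constructions though not needed for Theorem \ref{thm:non-trivial_b}. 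Your closing observation about how the odd cycle is used --- that $(\xi^2_G\cdot\gamma)$ equals the length of $\gamma$ mod $2$ --- matches the paper's application exactly; since your cycle is simple, the statement that each $\hat e$ with $e$ on $\gamma$ contributes exactly one transverse intersection is correct as written.
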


\begin{proof}
We introduce a terminology; a pair of consecutive oriented edges of $G$ is called a \emph{corner} of $G$.
There are $3\# V^{\rm int}(G)=3(4g-1)$ corners.
We number them as $c_1,\ldots,c_{3(4g-1)}$, so that $c_1$ contains the preferably oriented tail of $G$, and for each $i$, $c_i$ and $c_{i+1}$ share an oriented edge in common.
There are $n_o:=6g-1$ odd numbered corners, and $n_e:=6g-2$ even numbered corners.

Since $n_o$ and $n_e$ are not divisible by $3$, there exist distinct indices $i$ and $j$ with $1\le i<j\le 3(4g-1)$ such that the corners $c_i$ and $c_j$ are around the same vertex and $i-j\equiv 1\ {\rm mod\ } 2$. 
We can write $c_i$ and $c_j$ as $c_i=(e_i,e_i^{\prime})$ and $c_j=(e_j,e_j^{\prime})$ with $e_i\prec e_i^{\prime}$ and $e_j\prec e_j^{\prime}$.
Consider the edge cycle following consecutive oriented edges of $G$ from $e_i^{\prime}$ to $e_j$.
Since $i$ and $j$ have different parity, the length of this edge cycle must be odd.
This completes the proof.
\end{proof}

\begin{proof}[Proof of Theorem \ref{thm:non-trivial_b}]
By Lemma \ref{lem:odd_edge_cycle}, $G$ contains an edge cycle $\gamma$ of odd length.
By Proposition \ref{prop:mod2_formula}, the mod $2$ intersection pairing of $\xi^2_G$ and $\gamma$ is computed as
$$
(\xi^2_G\cdot \gamma)=\left( \sum_{e\in E(G)} \mu^2(e)\cdot \gamma \right)=({\rm the\ length\ of\ } \gamma)=1.
$$
Therefore, $\xi^2_G\neq 0$.
\end{proof}

An element $x\in H$ is called \emph{primitive} if there do not exist $m\in \mathbb{Z}$ and $y\in H$ such that $|m|\ge 2$ and $x=my$.
Note that $x$ is primitive if and only if there exists an element $x^{\prime}\in H$ such that $(x\cdot x^{\prime})=\pm 1$.
With Theorem \ref{thm:non-trivial_b} in mind, we would like to pose the following question.

\begin{quest}
For any trivalent fatgraph spine $G\subset \Sigma_{g,1}$, is $\xi_G$ a primitive element of $H$?
\end{quest}

The answer to this question is affirmative if $g\le 2$.
In fact, there is only one combinatorial isomorphism class of trivalent fatgraph spines for $g=1$, and there are $105$ combinatorial isomorphism classes for $g=2$.
By a direct computation, we can show the primitivity of $\xi_G$ for these cases.
The question remains open for $g\ge 3$.

In the below, we show that $\xi_G$ is primitive if $G$ is of a few special types.
We recall that for a fatgraph spine $G$ of $\Sigma_{g,1}$, the \emph{greedy algorithm} \cite{ABP} gives a maximal tree $T_G$ of $G$.
By definition, the set of vertices of $T_G$ is $V(G)$.
For $e\in E(G)$, give the preferred orientation to $e$ and let $v$ be the end point of $e$.
Then $e$ is an edge of $T_G$ if and only if $e\prec e^{\prime}$ for any $e^{\prime}\in E_v^{\rm ori}(G)$ with $e^{\prime}\neq e$.
One can see that $T_G$ is a maximal tree of $G$ and contains the tail of $G$. 

A trivalent fatgraph spine $G\subset \Sigma_{g,1}$ is a \emph{chord diagram} of genus $g$ if the first $4g$ elements in $E^{\rm ori}(G)$ have the preferred orientation.
To work with chord diagrams, we set up some notation.
Name and order the first $4g$ elements in $E^{\rm ori}(G)$ as $e_0,e_1,\ldots,e_{4g-2}$, and $f_0$.
Their underlying unoriented edges are distinct.
Let $\{ f_k\}_{k=1}^{2g-1}\subset E(G)$ be the other unoriented edges of $G$.
We give the preferred orientation to $f_k$, and let $v_k$ and $v_k^{\prime}$ be the start and end points of $f_k$.
Also, let $v_0^{\prime}$ be the end point of $f_0$.
We have $V^{\rm int}(G)=\{ v_k,v_k^{\prime} \}_{k=1}^{2g-1} \sqcup \{ v_0^{\prime} \}$.
Note that the maximal tree $T_G$ is straight, and the set of edges is $\{ e_i\}_{i=0}^{4g-2}$.
We can regard $G$ as a linear chord diagram constructed by attaching $2g$ chords $\{ f_k\}_{k=0}^{2g-1}$ suitably to the interval $[0,4g]$ at the integer points $\{ 1,\ldots, 4g\}$, where we identify $e_i$ with the interval $[i,i+1]$ (when we see it as a fatgraph spine, we remove the bivalent vertex at $4g\in [0,4g]$).
This is an explanation for our terminology. See also \cite{ABP} \cite{B1} \cite{B2}.

\begin{thm}
\label{thm:primitive1}
If $G$ is a chord diagram of genus $g$, then $\xi_G$ is a primitive element of $H$.
\end{thm}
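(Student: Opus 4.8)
The plan is to produce a class $\gamma\in H$ with $(\xi_G\cdot\gamma)=\pm 1$, which by the criterion recalled earlier in this section is equivalent to primitivity of $\xi_G$. I keep the notation for chord diagrams fixed above: the straight greedy tree $T_G$ with edges $e_0,\dots,e_{4g-2}$ (the tail $e_0$ has $\mu(e_0)=0$), and the $2g$ non-tree edges $f_0,\dots,f_{2g-1}$, with $\gamma_k$ the fundamental cycle of $T_G\cup\{f_k\}$, running once through $f_k$ and then back along the tree path joining its endpoints. It is worth noting at the outset that the $\mu(f_k)$ form a $\mathbb{Z}$-basis of $H$ whose dual basis, up to signs, consists of the $\gamma_k$: the fundamental cycles $\gamma_l$ form a basis of $H_1(G;\mathbb{Z})=H$ by the standard spanning-tree argument, and $(\mu(f_k)\cdot\gamma_l)=[\widehat{f_k}]\cdot[\gamma_l]$ vanishes for $k\neq l$ since $\widehat{f_k}$ meets $G$ only at the midpoint of $f_k$, a point not lying on $\gamma_l$, while it equals $\pm1$ for $k=l$. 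Hence $\xi_G=\sum_k\pm(\xi_G\cdot\gamma_k)\,\mu(f_k)$, and it suffices to find one index $k$ with $(\xi_G\cdot\gamma_k)=\pm1$. As a cheap first restriction, the mod $2$ computation in the proof of Theorem \ref{thm:non-trivial_b} gives $(\xi_G\cdot\gamma_k)\equiv\ell(\gamma_k)\pmod 2$, where $\ell(\gamma_k)$ is the length of the edge cycle $\gamma_k$; since $\xi_G^2\neq0$ and the $\gamma_k$ span $H_1(G;\mathbb{Z}_2)$, some $\gamma_k$ has odd length, so we should look among those.

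Next I would make $\xi_G$ explicit via Theorem \ref{thm:xi_explicit}. Because $G$ is a chord diagram, the tree edges and $f_0$ are preferably oriented, and walking along the straight tree one gets $\mu(e_i)=\sum_{j\le i}\mu(c_j)$, where $c_j$ is the chord-half pointing into the $j$-th tree vertex; in particular every tree-edge marking is a partial sum of the chord markings $\pm\mu(f_k)$. At each trivalent vertex $v$ the chord pattern determines whether $v$ is of type $1$ or type $2$, hence the edges $e_v$ and $f_v$, so Theorem \ref{thm:xi_explicit} rewrites $\xi_G=\sum_v(\mu(e_v)-\mu(f_v))$ as an explicit integral combination of the $\mu(f_k)$. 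This step is a finite, though pattern-dependent, bookkeeping whose output is controlled by the nesting and linking data of the chords.

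Finally I would single out one non-tree edge $f_*$ with $\ell(\gamma_*)$ odd and show $(\xi_G\cdot\gamma_*)=\pm1$. Writing $(\xi_G\cdot\gamma_*)=\sum_v(\mu(e_v)-\mu(f_v))\cdot\gamma_*$ and using that $(\mu(e)\cdot\gamma_*)$ is $0$ unless the unoriented edge underlying $e$ lies on $\gamma_*$, only the vertices of $\gamma_*$ — the two endpoints of $f_*$ and the tree vertices between them — contribute, and what remains is a signed count along this single cycle. To run this uniformly in $g$ I would organise the chords by their nesting, peeling off one chord so as to relate $G$ to a chord diagram of smaller genus and $\xi_G$ to its invariant via the gluing and tail-slide formulas of \S 4, reducing the count to $\pm1$ by induction; the base case $g=1$ is the single chord diagram, for which primitivity is immediate. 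Once $(\xi_G\cdot\gamma_*)=\pm1$ is established, primitivity of $\xi_G$ follows at once.

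The genuine difficulty lies in this last step: the parity of $(\xi_G\cdot\gamma_*)$ is automatic, but ruling out larger odd values forces one to use the chord-diagram structure in an essential way, either through the inductive peeling just sketched or through a careful case analysis of the type-$1$ and type-$2$ vertices met by $\gamma_*$. A minor additional annoyance is the ``bivalent vertex at $4g$'' convention, which makes one of the non-tree edges behave slightly differently and has to be tracked when locating $f_*$ and its endpoints.
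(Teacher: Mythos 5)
Your setup is sound: the duality between the non-tree edges $\{\mu(f_k)\}$ and the fundamental cycles $\{\gamma_k\}$ of the greedy tree is exactly the paper's relation (\ref{eq:-delta}), the parity observation $(\xi_G\cdot\gamma_k)\equiv\ell(\gamma_k)\bmod 2$ is correct, and you have correctly located where the real work lies. But that work is not done: the decisive step, showing $(\xi_G\cdot\gamma_*)=\pm 1$ rather than merely odd, is only a plan. The proposed induction is doubtful on its own terms. ``Peeling off a chord'' does not in general relate a chord diagram of genus $g$ to one of genus $g-1$ via the gluing formula of \S 4, since that formula applies only when the subgraph being removed is a fatgraph spine of a subsurface attached along a single edge, i.e.\ when the chord configuration separates; a generic chord diagram has linked chords and admits no such decomposition. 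The tail-slide formula changes the spine but not the genus, so it cannot drive the induction either. Moreover, your sufficient criterion (some coefficient of $\xi_G$ in the basis $\{\mu(f_k)\}$ equals $\pm1$) is strictly stronger than primitivity, so even the target of the induction is not guaranteed to hold a priori; you would first have to prove it does.

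The paper avoids all of this by pairing against the \emph{other} basis. Using Theorem \ref{thm:xi_explicit} it computes the contribution of the two endpoints of each chord $f_k$ ($1\le k\le 2g-1$) and finds that it equals the class of $\gamma_k$ itself, giving the closed formula $\xi_G=\sum_{k=1}^{2g-1}\gamma_k+\mu(e_{i_0})-\mu(f_0)$. It then evaluates $(\xi_G\cdot \mu(f_0))$: by (\ref{eq:-delta}) every term $(\gamma_k\cdot \mu(f_0))$ vanishes, and $(\mu(e_{i_0})\cdot \mu(f_0))=-1$, so $(\xi_G\cdot \mu(f_0))=-1$ with no case analysis and no induction. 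In other words, the orthogonality you noticed should be used to kill the cross terms when pairing with a chord class $\mu(f_0)$, not to extract coefficients by pairing with a cycle $\gamma_*$ — the latter reintroduces all the linking numbers $(\gamma_k\cdot\gamma_*)$ that make the count hard. As written, your argument establishes primitivity only up to an unproven claim, so it is incomplete.
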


\begin{proof}
We use the notation in the paragraph before the statement of the theorem.

\begin{figure}
\begin{center}
\caption{The proof of Theorem \ref{thm:primitive1}}
\label{fig:chorddiagram}
\unitlength 0.1in
\begin{picture}( 34.4000,  9.5400)(  4.0000,-11.0000)
%
{\color[named]{Black}{%
\special{pn 8}%
\special{ar 1200 832 480 480  3.1415927 6.2831853}%
}}%
%
{\color[named]{Black}{%
\special{pn 8}%
\special{pa 400 832}%
\special{pa 2000 832}%
\special{fp}%
}}%
%
{\color[named]{Black}{%
\special{pn 8}%
\special{ar 3040 832 480 480  3.1415927 6.2831853}%
}}%
%
{\color[named]{Black}{%
\special{pn 8}%
\special{pa 2240 832}%
\special{pa 3840 832}%
\special{fp}%
}}%
%
{\color[named]{Black}{%
\special{pn 4}%
\special{sh 1}%
\special{ar 720 832 26 26 0  6.28318530717959E+0000}%
}}%
%
{\color[named]{Black}{%
\special{pn 4}%
\special{sh 1}%
\special{ar 1680 832 26 26 0  6.28318530717959E+0000}%
}}%
\put(6.6800,-10.2600){\makebox(0,0)[lb]{$v^{\prime}_k$}}%
\put(16.4100,-9.9600){\makebox(0,0)[lb]{$v_k$}}%
%
{\color[named]{Black}{%
\special{pn 8}%
\special{pa 1200 352}%
\special{pa 1280 312}%
\special{fp}%
\special{pa 1200 352}%
\special{pa 1280 392}%
\special{fp}%
}}%
%
{\color[named]{Black}{%
\special{pn 8}%
\special{pa 3040 352}%
\special{pa 2960 312}%
\special{fp}%
\special{pa 3040 352}%
\special{pa 2960 392}%
\special{fp}%
}}%
%
{\color[named]{Black}{%
\special{pn 8}%
\special{pa 1840 832}%
\special{pa 1760 792}%
\special{fp}%
\special{pa 1840 832}%
\special{pa 1760 872}%
\special{fp}%
}}%
\put(17.2000,-7.6000){\makebox(0,0)[lb]{$e_{i_k}$}}%
%
{\color[named]{Black}{%
\special{pn 8}%
\special{pa 640 832}%
\special{pa 560 792}%
\special{fp}%
\special{pa 640 832}%
\special{pa 560 872}%
\special{fp}%
}}%
\put(5.2800,-7.6400){\makebox(0,0)[lb]{$e_{i^{\prime}_k}$}}%
\put(11.6800,-2.7600){\makebox(0,0)[lb]{$f_k$}}%
\put(30.0800,-2.7600){\makebox(0,0)[lb]{$f_k$}}%
%
{\color[named]{Black}{%
\special{pn 4}%
\special{sh 1}%
\special{ar 2560 832 26 26 0  6.28318530717959E+0000}%
}}%
%
{\color[named]{Black}{%
\special{pn 4}%
\special{sh 1}%
\special{ar 3520 832 26 26 0  6.28318530717959E+0000}%
}}%
%
{\color[named]{Black}{%
\special{pn 8}%
\special{pa 2720 832}%
\special{pa 2640 792}%
\special{fp}%
\special{pa 2720 832}%
\special{pa 2640 872}%
\special{fp}%
}}%
%
{\color[named]{Black}{%
\special{pn 8}%
\special{pa 3440 832}%
\special{pa 3360 792}%
\special{fp}%
\special{pa 3440 832}%
\special{pa 3360 872}%
\special{fp}%
}}%
\put(25.2100,-9.9600){\makebox(0,0)[lb]{$v_k$}}%
\put(34.6800,-10.2600){\makebox(0,0)[lb]{$v^{\prime}_k$}}%
\put(33.2100,-7.8000){\makebox(0,0)[lb]{$e_{i^{\prime}_k}$}}%
\put(26.1600,-7.8000){\makebox(0,0)[lb]{$e_{i_k}$}}%
%
{\color[named]{Black}{%
\special{pn 4}%
\special{sh 1}%
\special{ar 3280 832 26 26 0  6.28318530717959E+0000}%
}}%
%
{\color[named]{Black}{%
\special{pn 4}%
\special{sh 1}%
\special{ar 2800 832 26 26 0  6.28318530717959E+0000}%
}}%
%
{\color[named]{Black}{%
\special{pn 4}%
\special{sh 1}%
\special{ar 1920 832 26 26 0  6.28318530717959E+0000}%
}}%
%
{\color[named]{Black}{%
\special{pn 4}%
\special{sh 1}%
\special{ar 480 832 26 26 0  6.28318530717959E+0000}%
}}%
\put(25.8000,-12.3000){\makebox(0,0)[lb]{the case $i_k<i^{\prime}_k$}}%
\put(7.2000,-12.3000){\makebox(0,0)[lb]{the case $i_k>i^{\prime}_k$}}%
\end{picture}%
\end{center}
\end{figure}

We fix an index $k$ with $1\le k\le 2g-1$ and compute the contribution from $v_k$ and $v_k^{\prime}$ to $\xi_G$.
Let $i_k$ and $i_k^{\prime}$ be indices such that $\bar{f}_k$ is next to $\bar{e}_{i_k}$ in $E^{\rm ori}_{v_k}(G)$ and $e_{i^{\prime}_k}$ is next to $f_k$ in $E^{\rm ori}_{v^{\prime}_k}(G)$.
We have either $i_k>i^{\prime}_k$ or $i_k<i^{\prime}_k$. See Figure \ref{fig:chorddiagram}.

If $i_k>i^{\prime}_k$, the contribution from $v_k$ is $\mu(\bar{e}_{i_k})-\mu(\bar{f}_k)=-\mu(e_{i_k})+\mu(f_k)$, and that from $v_k^{\prime}$ is $\mu(e_{i^{\prime}_k})-\mu(f_k)$.
As a total, the contribution from $v_k$ and $v_k^{\prime}$ is $\mu(e_{i^{\prime}_k})-\mu(e_{i_k})$.
Moreover, the edge cycle
$$
\gamma_k:=f_k\bar{e}_{i^{\prime}_k}\bar{e}_{i^{\prime}_k-1}\cdots \bar{e}_{i_k}
$$
represents the homology class $\mu(e_{i^{\prime}_k})-\mu(e_{i_k})$.
If $i_k<i^{\prime}_k$, we can argue similarly and the contribution from $v_k$ and $v_k^{\prime}$ is represented by the edge cycle
$$
\gamma_k=f_ke_{i^{\prime}_k+1}\cdots e_{i_k-1}.
$$
We observe that for any $1\le k\le 2g-1$ and $0\le k^{\prime}\le 2g-1$, one has
\begin{equation}
\label{eq:-delta}
(\gamma_k\cdot f_{k^{\prime}})=-\delta_{kk^{\prime}},
\end{equation}
where $\delta_{kk^{\prime}}$ is Kronecker's delta, and for simplicity we write $f_{k^{\prime}}$ instead of $\mu(f_{k^{\prime}})$.
This is because $\gamma_k\cap (G\setminus {\rm Int}(f_{k^{\prime}}))=\emptyset$ for $k\neq k^{\prime}$, and $\gamma_k$ intersects $\hat{f}_k$ once.

Let $i_0$ be an index such that $\bar{e}_{i_0}$ is next to $f_0$ in $E^{\rm ori}(G)$. The vertex $v_0^{\prime}$ is of type 2 and its contribution is given by $\mu(e_{i_0})-\mu(f_0)$.

We conclude that
\begin{equation}
\label{eq:xi_G=chord}
\xi_G=\sum_{k=1}^{2g-1} \gamma_k+\mu(e_{i_0})-\mu(f_0).
\end{equation}
By $(e_{i_0}\cdot f_0)=-1$ and (\ref{eq:-delta}), we have $(\xi_G\cdot f_0)=-1$.
This shows that $\xi_G$ is primitive.
\end{proof}

We can also prove the primitivity of $\xi_G$ for a trivalent fatgraph spine $G\subset \Sigma_{g,1}$ which is obtained from a chord diagram of genus $g$ by a single flip. 

\begin{thm}
\label{thm:primitive2}
Let $G$ be a trivalent fatgraph spine of $\Sigma_{g,1}$ and assume that the first $4g-1$ elements in $E^{\rm ori}(G)$ have the preferred orientation,
and that the $4g$-th element does not.
Then $\xi_G$ is a primitive element of $H$.
\end{thm}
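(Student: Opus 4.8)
The plan is to use the observation recorded just before the statement: a $G$ satisfying this ordering condition is precisely one obtained from a chord diagram $G_0$ of genus $g$ by a single flip. Concretely, writing the first $4g-1$ elements of $E^{\mathrm{ori}}(G)$, which by hypothesis are preferably oriented, as $e_0\prec e_1\prec\cdots\prec e_{4g-2}$, the $4g$-th element must be $\bar e_{j_0}$ for a unique index $j_0\in\{0,\dots,4g-2\}$; I would first check that reversing an appropriate flip situated near the tail produces a fatgraph spine $G_0$ whose first $4g$ oriented edges are preferably oriented, i.e.\ a chord diagram, and call the corresponding flip $W\colon G_0\to G$. Then I would import the notation and conclusions of the proof of Theorem \ref{thm:primitive1} applied to $G_0$: the straight maximal tree $e_0,\dots,e_{4g-2}$, the chords $f_0,\dots,f_{2g-1}$, the edge cycles $\gamma_1,\dots,\gamma_{2g-1}$ with $(\gamma_k\cdot\mu(f_{k'}))=-\delta_{kk'}$, the identity $\xi_{G_0}=\sum_{k=1}^{2g-1}\gamma_k+\mu(e_{i_0})-\mu(f_0)$, and in particular $(\xi_{G_0}\cdot\mu(f_0))=-1$.

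Next I would pass from $\xi_{G_0}$ to $\xi_G$. The quickest route is Proposition \ref{prop:char_xi_G}(2): $\xi_G=\xi_{G_0}+\bigl(2j'(W)-m(W)\bigr)$, where $2j'(W)-m(W)$ is evaluated by the six-case analysis in the proof of Theorem \ref{thm:xi_explicit} and in each case equals a difference of homology markings of two of the four edges $a,b,c,d$ adjacent to the flipped edge. Alternatively one can compute directly from Theorem \ref{thm:xi_explicit}: the flip $W$ changes the type (type $1$ versus type $2$, in the sense of Figure \ref{fig:2types}) of only the at most four trivalent vertices meeting the flipped edge or its image, so $\xi_G-\xi_{G_0}$ is a short, explicit combination of homology markings of edges near the tail. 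Write $\xi_G=\xi_{G_0}+\rho$ with $\rho\in H$ this correction term.

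Finally I would exhibit $x\in H$ with $(\xi_G\cdot x)=\pm1$, as in Theorem \ref{thm:primitive1}. The first candidate is $x=\mu(f_0)$, giving $(\xi_G\cdot x)=-1+(\rho\cdot\mu(f_0))$, so it suffices to see that $\widehat{f_0}$ may be chosen to avoid the one or two edges near the tail occurring in $\rho$, in which case $(\rho\cdot\mu(f_0))=0$. When the flip genuinely meets $f_0$, I would instead take for $x$ the homology class of an edge cycle of $G$ built from $\gamma_0$ (equivalently, from $f_0$ together with a stretch of the straight tree) and corrected across the flipped edge, and recompute $(\xi_G\cdot x)$ using $(\gamma_k\cdot\mu(f_{k'}))=-\delta_{kk'}$ and $(\mu(e_{i_0})\cdot\mu(f_0))=-1$; the bookkeeping mirrors the chord-diagram case and still yields $\pm1$. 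Hence $\xi_G$ is primitive.

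The step I expect to be the main obstacle is the first one: correctly identifying which chord diagram $G_0$ and which flip $W$ the hypothesis encodes, and keeping track of the types of the few vertices that $W$ disturbs, because the flip sits next to the tail, precisely where the total ordering $\prec$ is most delicate. Once that local picture is pinned down, the intersection-pairing computation is routine and essentially a rerun of the proof of Theorem \ref{thm:primitive1}.
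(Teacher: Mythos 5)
Your overall strategy coincides with the paper's: realize $G$ as a single flip away from a chord diagram $G_0$, import the formula (\ref{eq:xi_G=chord}) and the pairing identities (\ref{eq:-delta}) from the proof of Theorem \ref{thm:primitive1}, transport $\xi_{G_0}$ to $\xi_G$ via Proposition \ref{prop:char_xi_G}(2), and then exhibit a class pairing to $\pm 1$ with $\xi_G$. However, the step you defer is exactly where the content of the paper's proof lies, and your proposed way of closing it does not work as stated. First, the assertion that ``$\widehat{f_0}$ may be chosen to avoid the one or two edges near the tail occurring in $\rho$, in which case $(\rho\cdot\mu(f_0))=0$'' confuses a choice of representative with a homological invariant: $(\mu(a)\cdot\mu(b))$ is determined by the linking pattern of $a$ and $b$ in the cyclic order on $\partial D_G$, not by geometric proximity, and in general the correction term $\rho=-2j^{\prime}(W_h)+m(W_h)$ \emph{does} pair non-trivially with the class you call $\mu(f_0)$ (which is $\mu(e_{4g-2})$ in the paper's labels). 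Relatedly, the flip need not sit ``next to the tail'': in the paper's notation the flipped edge $h$ attaches at the endpoint of $e_n$ for an arbitrary $n$ with $0\le n\le 4g-3$, so the edges $e_n,e_{n+1},f_1,f_2$ entering $\rho$ can lie anywhere along the spine and can link $e_{4g-2}$.

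Second, your fallback (``take a corrected edge cycle; the bookkeeping mirrors the chord-diagram case and still yields $\pm1$'') is an assertion, not an argument. What the paper actually does at this point is a two-case analysis according to whether $\bar f_1\prec\bar e_n$ or $\bar e_n\prec\bar f_1$, computing the three pairings $(\xi_G\cdot e_{4g-2})$, $(\xi_G\cdot f_1)$, $(\xi_G\cdot f_2)$ from (\ref{eq:xi_G'})--(\ref{eq:xi_G_G'}); the argument closes only because of combinatorial implications among these intersection numbers, e.g.\ in Case 1, if $(f_2\cdot e_{4g-2})\neq 0$ then necessarily $(e_n\cdot e_{4g-2})=(f_1\cdot e_{4g-2})=0$, so that one of the three candidates is forced to pair to $-1$. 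No single ``corrected $\gamma_0$'' is used, and nothing in your sketch guarantees that some candidate succeeds. So the proposal identifies the right skeleton but leaves a genuine gap at the decisive computation.
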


\begin{proof}
Name and order the first $4g-1$ elements in $E^{\rm ori}(G)$ as $e_0,e_1,\ldots,e_{4g-2}$.
There exist an integer $n$ with $0\le n\le 4g-3$ and $h\in E(G)$ such that if $v$ is the end point of $e_n$, then $e_n$, $\bar{e}_{n+1}$, and $\bar{h}$ are in this order with respect to the cyclic ordering given to $E^{\rm ori}_v(G)$, and the set of edges of $T_G$ is $\{ e_i\}_{i=0}^{4g-3}\cup \{h\}$.
We denote by $v^{\prime}$ the end point of $h$ with the preferred orientation.
Let $f_1,f_2\in E(G)$ be edges which are different from $h$ and have $v^{\prime}$ as an end point.
We arrange that $f_1$ with the preferred orientation is next to $h$ in $E^{\rm ori}(G)$.

Let $G^{\prime}$ be the result of flip along $h$, and let $h^{\prime}\in E(G^{\prime})$ be the edge corresponding to $h\in E(G)$.
Then $G^{\prime}$ is a chord diagram in the sense of Theorem \ref{thm:primitive1}, and $\{ e_i\}_{i=0}^{4g-3} \cup \{ h^{\prime}\}$ becomes the set of edges of $T_{G^{\prime}}$.
Extending $f_1$ and $f_2$ to $\{ f_k\}_{k=1}^{2g-1}$ as in the proof of Theorem \ref{thm:primitive1}, by (\ref{eq:xi_G=chord}) we have
\begin{equation}
\label{eq:xi_G'}
\xi_{G^{\prime}}=\sum_{k=1}^{2g-1}\gamma_k+\mu(e_{i_0})-\mu(e_{4g-2}),
\end{equation}
where $i_0$ is an index such that $\bar{e}_{i_0}$ is next to $e_{4g-2}$ in $E^{\rm ori}(G^{\prime})$.
By the last part of the proof of Theorem \ref{thm:primitive1}, we have
\begin{equation}
\label{eq:xi_G_4g-2}
(\xi_{G^{\prime}}\cdot e_{4g-2})=-1.
\end{equation}
Note that for $j=1,2$, we have
\begin{equation}
\label{eq:eq_for_f_j}
(e_{i_0}\cdot f_j)=0 \quad {\rm \ and\ } \quad
\left(\sum_{k=1}^{2g-1}\gamma_k \cdot f_j \right)=-1.
\end{equation}
In fact, the first equation of (\ref{eq:eq_for_f_j}) follows from $\bar{e}_{i_0}\prec f_j$, and the second one follows from (\ref{eq:-delta}).

By Proposition \ref{prop:char_xi_G} (2), we have
\begin{equation}
\label{eq:xi_G_G'}
\xi_G=\xi_{G^{\prime}}-2j^{\prime}(W_h)+m(W_h).
\end{equation}
Furthermore we have $m(W_h)=\mu(e_n)+\mu(\bar{f}_1)=\mu(e_n)-\mu(f_1)$, and
\begin{eqnarray*}
j^{\prime}(W_h) &= (e_n\cdot \bar{e}_{n+1})\mu(\bar{f}_1)+(\bar{e}_{n+1}\cdot \bar{f}_1)\mu(e_n)+(\bar{f}_1\cdot e_n)\mu(\bar{e}_{n+1}) \\
&= (e_n\cdot e_{n+1})\mu(f_1)+(e_{n+1}\cdot f_1)\mu(e_n)+(f_1\cdot e_n)\mu(e_{n+1}).
\end{eqnarray*}

Now we have either $\bar{f}_1\prec \bar{e}_n$ or $\bar{e}_n\prec \bar{f}_1$ in $E^{\rm ori}(G)$.

\begin{figure}
\begin{center}
\caption{The proof of Theorem \ref{thm:primitive2}}
\label{fig:primitivity}
\input{primitive.tex}
\end{center}
\end{figure}

Case 1: assume that $\bar{f}_1\prec \bar{e}_n$.
See the left part of Figure \ref{fig:primitivity}.
Then any two elements of $\{ \mu(e_n), \mu(e_{n+1}), \mu(f_1), \mu(f_2) \}$ have the trivial intersection pairing.
By a direct computation using equations (\ref{eq:xi_G'}) to (\ref{eq:xi_G_G'}),
we obtain
\begin{equation}
\label{eq:xi_G_3}
\begin{cases}
(\xi_G \cdot e_{4g-2}) &=-1+(e_n \cdot e_{4g-2})-(f_1 \cdot e_{4g-2}), \\
(\xi_G \cdot f_1) &=(f_1\cdot e_{4g-2})-1, \\
(\xi_G \cdot f_2) &=(f_2\cdot e_{4g-2})-1.
\end{cases}
\end{equation}
If $(f_2\cdot e_{4g-2})=0$, we have $(\xi_G\cdot f_2)=-1$.
Otherwise, we see that $(e_n\cdot e_{4g-2})=(f_1\cdot e_{4g-2})=0$ and hence $(\xi_G\cdot e_{4g-2})=-1$.

Case 2: assume that $\bar{e}_n\prec \bar{f}_1$.
See the right part of Figure \ref{fig:primitivity}.
We have $(e_n\cdot e_{n+1})=(f_1\cdot e_{n+1})=(f_2\cdot e_{n+1})=0$ and $(e_n\cdot f_1)=(e_n\cdot f_2)=(f_1\cdot f_2)=-1$. Again by using equations (\ref{eq:xi_G'}) to (\ref{eq:xi_G_G'}), we obtain
\begin{equation}
\begin{cases}
(\xi_G\cdot e_{4g-2}) &=-1+(e_n\cdot e_{4g-2})-(f_1\cdot e_{4g-2})-2(e_{n+1}\cdot e_{4g-2}), \\
(\xi_G\cdot f_1) &= (f_1\cdot e_{4g-2})-2, \\
(\xi_G\cdot f_2) &= (f_2\cdot e_{4g-2})-1.
\end{cases}
\end{equation}
If $(f_2\cdot e_{4g-2})=0$, we have $(\xi_G \cdot f_2)=-1$.
Otherwise, we have $(f_2\cdot e_{4g-2})=1$ and we obtain $(\xi_G\cdot f_1)=-1$.

In any case, we can find an element $x^{\prime}\in H$ such that $(\xi_G\cdot x^{\prime})=-1$.
Therefore, $\xi_G$ is primitive.
\end{proof}

In the case of trivalent fatgraph spines of a once punctured surface $\Sigma_g^1$, it can happen that $\xi_G=0$. Two examples of $G\subset \Sigma_2^1$ with $\xi_G=0$ are given in Figure \ref{fig:trivial}.

\begin{figure}
\begin{center}
\caption{Trivalent fatgraph spines with $\xi_G=0$}
\label{fig:trivial}
\vspace{0.3cm}
\unitlength 0.1in
\begin{picture}( 37.5000,  8.0000)(  2.0000,-13.0000)
%
{\color[named]{Black}{%
\special{pn 8}%
\special{pa 600 600}%
\special{pa 1600 600}%
\special{fp}%
}}%
%
{\color[named]{Black}{%
\special{pn 8}%
\special{pa 600 1200}%
\special{pa 1600 1200}%
\special{fp}%
}}%
%
{\color[named]{Black}{%
\special{pn 4}%
\special{sh 1}%
\special{ar 600 600 26 26 0  6.28318530717959E+0000}%
}}%
%
{\color[named]{Black}{%
\special{pn 4}%
\special{sh 1}%
\special{ar 600 1200 26 26 0  6.28318530717959E+0000}%
}}%
%
{\color[named]{Black}{%
\special{pn 4}%
\special{sh 1}%
\special{ar 1100 600 26 26 0  6.28318530717959E+0000}%
}}%
%
{\color[named]{Black}{%
\special{pn 4}%
\special{sh 1}%
\special{ar 1000 1200 26 26 0  6.28318530717959E+0000}%
}}%
%
{\color[named]{Black}{%
\special{pn 8}%
\special{ar 1100 1200 100 100  0.2914568 3.1415927}%
}}%
%
{\color[named]{Black}{%
\special{pn 8}%
\special{ar 1100 1200 100 300  4.7123890 6.2033553}%
}}%
%
{\color[named]{Black}{%
\special{pn 8}%
\special{pa 1100 900}%
\special{pa 1100 600}%
\special{fp}%
}}%
%
{\color[named]{Black}{%
\special{pn 8}%
\special{ar 500 600 100 100  3.1415927 6.2831853}%
}}%
%
{\color[named]{Black}{%
\special{pn 8}%
\special{pa 400 600}%
\special{pa 400 1100}%
\special{fp}%
}}%
%
{\color[named]{Black}{%
\special{pn 8}%
\special{ar 600 1100 200 100  1.5707963 3.1415927}%
}}%
%
{\color[named]{Black}{%
\special{pn 8}%
\special{ar 300 1000 100 200  3.1415927 4.7123890}%
}}%
%
{\color[named]{Black}{%
\special{pn 8}%
\special{ar 500 1000 300 300  1.5707963 3.1415927}%
}}%
%
{\color[named]{Black}{%
\special{pn 8}%
\special{ar 500 1200 100 100  6.2831853 6.2831853}%
\special{ar 500 1200 100 100  0.0000000 1.5707963}%
}}%
%
{\color[named]{Black}{%
\special{pn 8}%
\special{pa 300 800}%
\special{pa 376 750}%
\special{fp}%
}}%
%
{\color[named]{Black}{%
\special{pn 8}%
\special{pa 600 600}%
\special{pa 420 720}%
\special{fp}%
}}%
%
{\color[named]{Black}{%
\special{pn 8}%
\special{ar 1700 600 100 100  3.1415927 6.2831853}%
}}%
%
{\color[named]{Black}{%
\special{pn 8}%
\special{pa 1800 600}%
\special{pa 1800 1100}%
\special{fp}%
}}%
%
{\color[named]{Black}{%
\special{pn 8}%
\special{ar 1600 1100 200 100  6.2831853 6.2831853}%
\special{ar 1600 1100 200 100  0.0000000 1.5707963}%
}}%
%
{\color[named]{Black}{%
\special{pn 4}%
\special{sh 1}%
\special{ar 1600 1200 26 26 0  6.28318530717959E+0000}%
}}%
%
{\color[named]{Black}{%
\special{pn 4}%
\special{sh 1}%
\special{ar 1600 600 26 26 0  6.28318530717959E+0000}%
}}%
%
{\color[named]{Black}{%
\special{pn 8}%
\special{ar 1700 1200 100 100  1.5707963 3.1415927}%
}}%
%
{\color[named]{Black}{%
\special{pn 8}%
\special{ar 1700 1000 300 300  6.2831853 6.2831853}%
\special{ar 1700 1000 300 300  0.0000000 1.5707963}%
}}%
%
{\color[named]{Black}{%
\special{pn 8}%
\special{ar 1900 1000 100 200  4.7123890 6.2831853}%
}}%
%
{\color[named]{Black}{%
\special{pn 8}%
\special{pa 1900 800}%
\special{pa 1826 750}%
\special{fp}%
}}%
%
{\color[named]{Black}{%
\special{pn 8}%
\special{pa 1600 600}%
\special{pa 1780 720}%
\special{fp}%
}}%
%
{\color[named]{Black}{%
\special{pn 8}%
\special{pa 2600 900}%
\special{pa 2900 700}%
\special{fp}%
}}%
%
{\color[named]{Black}{%
\special{pn 8}%
\special{pa 2900 700}%
\special{pa 3200 900}%
\special{fp}%
}}%
%
{\color[named]{Black}{%
\special{pn 8}%
\special{pa 3200 900}%
\special{pa 3500 700}%
\special{fp}%
}}%
%
{\color[named]{Black}{%
\special{pn 8}%
\special{pa 3500 700}%
\special{pa 3800 900}%
\special{fp}%
}}%
%
{\color[named]{Black}{%
\special{pn 4}%
\special{sh 1}%
\special{ar 2600 900 26 26 0  6.28318530717959E+0000}%
}}%
%
{\color[named]{Black}{%
\special{pn 4}%
\special{sh 1}%
\special{ar 2900 700 26 26 0  6.28318530717959E+0000}%
}}%
%
{\color[named]{Black}{%
\special{pn 4}%
\special{sh 1}%
\special{ar 3200 900 26 26 0  6.28318530717959E+0000}%
}}%
%
{\color[named]{Black}{%
\special{pn 4}%
\special{sh 1}%
\special{ar 3500 700 26 26 0  6.28318530717959E+0000}%
}}%
%
{\color[named]{Black}{%
\special{pn 4}%
\special{sh 1}%
\special{ar 3800 900 26 26 0  6.28318530717959E+0000}%
}}%
%
{\color[named]{Black}{%
\special{pn 8}%
\special{ar 2600 1050 150 150  1.5707963 4.7123890}%
}}%
%
{\color[named]{Black}{%
\special{pn 8}%
\special{pa 2600 1200}%
\special{pa 3800 1200}%
\special{fp}%
}}%
%
{\color[named]{Black}{%
\special{pn 8}%
\special{ar 3800 1050 150 150  4.7123890 6.2831853}%
\special{ar 3800 1050 150 150  0.0000000 1.5707963}%
}}%
%
{\color[named]{Black}{%
\special{pn 4}%
\special{sh 1}%
\special{ar 3000 1200 26 26 0  6.28318530717959E+0000}%
}}%
%
{\color[named]{Black}{%
\special{pn 8}%
\special{ar 3100 1200 100 100  0.2449787 3.1415927}%
}}%
%
{\color[named]{Black}{%
\special{pn 8}%
\special{pa 3200 1176}%
\special{pa 3200 900}%
\special{fp}%
}}%
%
{\color[named]{Black}{%
\special{pn 8}%
\special{pa 2600 900}%
\special{pa 2600 700}%
\special{fp}%
}}%
%
{\color[named]{Black}{%
\special{pn 8}%
\special{ar 3050 700 450 200  3.1415927 6.2831853}%
}}%
%
{\color[named]{Black}{%
\special{pn 8}%
\special{ar 3350 700 450 200  4.5508461 6.2831853}%
}}%
%
{\color[named]{Black}{%
\special{pn 8}%
\special{ar 3350 700 450 200  3.1415927 4.2398920}%
}}%
%
{\color[named]{Black}{%
\special{pn 8}%
\special{pa 3800 700}%
\special{pa 3800 900}%
\special{fp}%
}}%
\end{picture}%
\end{center}
\end{figure}
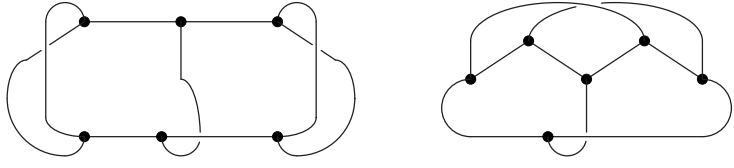

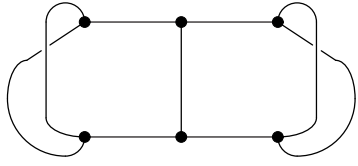
\begin{figure}
\begin{center}
\caption{A balanced trivalent fatgraph spine with $\xi_G\neq 0$}
\label{fig:balanced}
\vspace{0.3cm}
{\unitlength 0.1in%
\begin{picture}( 18.0000,  8.0000)(  2.0000,-13.0000)%
%
\special{pn 8}%
\special{pa 600 600}%
\special{pa 1600 600}%
\special{fp}%
%
\special{pn 8}%
\special{pa 600 1200}%
\special{pa 1600 1200}%
\special{fp}%
%
\special{pn 4}%
\special{sh 1}%
\special{ar 600 600 28 28 0  6.28318530717959E+0000}%
%
\special{pn 4}%
\special{sh 1}%
\special{ar 600 1200 28 28 0  6.28318530717959E+0000}%
%
\special{pn 4}%
\special{sh 1}%
\special{ar 1100 600 28 28 0  6.28318530717959E+0000}%
%
\special{pn 4}%
\special{sh 1}%
\special{ar 1100 1200 28 28 0  6.28318530717959E+0000}%
%
\special{pn 8}%
\special{ar 500 600 100 100  3.1415927  6.2831853}%
%
\special{pn 8}%
\special{pa 400 600}%
\special{pa 400 1100}%
\special{fp}%
%
\special{pn 8}%
\special{ar 600 1100 200 100  1.5707963  3.1415927}%
%
\special{pn 8}%
\special{ar 300 1000 100 200  3.1415927  4.7123890}%
%
\special{pn 8}%
\special{ar 500 1000 300 300  1.5707963  3.1415927}%
%
\special{pn 8}%
\special{ar 500 1200 100 100  6.2831853  1.5707963}%
%
\special{pn 8}%
\special{pa 300 800}%
\special{pa 375 750}%
\special{fp}%
%
\special{pn 8}%
\special{pa 600 600}%
\special{pa 420 720}%
\special{fp}%
%
\special{pn 8}%
\special{ar 1700 600 100 100  3.1415927  6.2831853}%
%
\special{pn 8}%
\special{pa 1800 600}%
\special{pa 1800 1100}%
\special{fp}%
%
\special{pn 8}%
\special{ar 1600 1100 200 100  6.2831853  1.5707963}%
%
\special{pn 4}%
\special{sh 1}%
\special{ar 1600 1200 28 28 0  6.28318530717959E+0000}%
%
\special{pn 4}%
\special{sh 1}%
\special{ar 1600 600 28 28 0  6.28318530717959E+0000}%
%
\special{pn 8}%
\special{ar 1700 1200 100 100  1.5707963  3.1415927}%
%
\special{pn 8}%
\special{ar 1700 1000 300 300  6.2831853  1.5707963}%
%
\special{pn 8}%
\special{ar 1900 1000 100 200  4.7123890  6.2831853}%
%
\special{pn 8}%
\special{pa 1900 800}%
\special{pa 1825 750}%
\special{fp}%
%
\special{pn 8}%
\special{pa 1600 600}%
\special{pa 1780 720}%
\special{fp}%
%
\special{pn 8}%
\special{pa 1100 1200}%
\special{pa 1100 600}%
\special{fp}%
\end{picture}}%
\end{center}
\end{figure}

Let $G$ be a trivalent fatgraph spine of $\Sigma_g^1$.
Recall from the proof of Corollary \ref{cor:punctured} that if $G\subset \Sigma_g^1$, then $E^{\rm ori}(G)$ is cyclically ordered.
Hence it is possible to talk about corners of $G$ in a way similar to the case of trivalent fatgraph spines of $\Sigma_{g,1}$.
Now we give labels $\alpha$ or $\beta$ to each corner of $G$ so that any pair of consecutive corners of $G$ have distinct labels. Since the number of corners of $G$ is even, this labeling is always possible and is determined once we choose the label of a fixed corner.

We say that $G$ is \emph{balanced} if for any vertex of $G$, the three corners around the vertex have the same label.
For example, trivalent fatgraph spines in Figure 10 and Figure 11 are balanced.

\begin{thm}
\label{thm:non-trivial_p}
Let $G$ be a trivalent fatgraph spine of $\Sigma_g^1$.
Then the mod $2$ reduction $\xi_G^2=\xi_G\otimes (1 {\rm \ mod\ } 2)$ is trivial if and only if $G$ is balanced.
\end{thm}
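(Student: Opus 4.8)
The plan is to reduce the assertion to a purely graph‑theoretic property of $G$, namely bipartiteness, and to identify that property with being balanced.

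\emph{Step 1: a mod $2$ formula in the punctured case.} First I would establish that for a trivalent fatgraph spine $G\subset\Sigma_g^1$ one still has
$$
\xi^2_G=\sum_{e\in E(G)}\mu^2(e).
$$
This follows from the definition $\xi_G=\xi_{\widetilde G(e,\ell)}-\mu(e)$ of Corollary \ref{cor:punctured} together with Proposition \ref{prop:mod2_formula} applied to $\widetilde G(e,\ell)\subset\Sigma_{g,1}$: the edges of $\widetilde G(e,\ell)$ are the tail $\ell$ (whose homology marking is trivial), the two edges $e_1,e_2$ into which $e$ is subdivided (each with homology marking $\mu(e)$), and the remaining edges of $G$. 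Since $\mu^2(e_1)+\mu^2(e_2)=0$ in $H\otimes\mathbb Z_2$, Proposition \ref{prop:mod2_formula} gives $\xi^2_{\widetilde G(e,\ell)}=\sum_{e'\in E(G)}\mu^2(e')+\mu^2(e)$, and subtracting $\mu^2(e)$ yields the claimed formula. (Its independence of the choice of $(e,\ell)$ is already guaranteed by Corollary \ref{cor:punctured}.)

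\emph{Step 2: detection by edge cycles.} Exactly as in the proof of Theorem \ref{thm:non-trivial_b}, for any edge cycle $\gamma$ of $G$ the dual loop $\hat e$ meets $\gamma$ once for each traversal of $e$, so
$$
(\xi^2_G\cdot\gamma)=\Big(\sum_{e\in E(G)}\mu^2(e)\cdot\gamma\Big)\equiv \ell(\gamma)\pmod 2,
$$
where $\ell(\gamma)$ denotes the length of $\gamma$. Since $G$ is homotopy equivalent to $\Sigma_g^1$, classes of edge cycles span $H_1(\Sigma_g^1;\mathbb Z_2)\cong H\otimes\mathbb Z_2$, on which the mod $2$ intersection pairing is non-degenerate. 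Hence $\xi^2_G=0$ if and only if every edge cycle of $G$ has even length, that is, if and only if $G$ is bipartite as an abstract graph.

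\emph{Step 3: bipartite $\Leftrightarrow$ balanced.} It remains to match the two conditions. Recall that the cyclic ordering of $E^{\rm ori}(G)$ underlying the notion of corner is the (single) boundary cycle of the ribbon graph $G$, of even length $2\#E(G)$, so the two‑coloring of corners by $\alpha,\beta$ exists and is unique up to swapping the colors. If $G$ is balanced, assign to each vertex $v$ the common color of its three corners; along either side of an edge $uv$ there are two consecutive corners, one at $u$ and one at $v$, which therefore carry distinct colors, so $u$ and $v$ get distinct colors and $G$ is bipartite. Conversely, given a bipartition $V(G)=A\sqcup B$, color every corner at a vertex $v$ according to whether $v\in A$ or $v\in B$; consecutive corners lie at adjacent vertices and thus receive distinct colors, so this coloring \emph{is} the corner two‑coloring, and by construction the three corners at each vertex share a color, i.e.\ $G$ is balanced. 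Combining Steps 1--3 proves the theorem.

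The step I expect to require the most care is the bookkeeping in Step 3: one must verify precisely that two corners are consecutive in the cyclic ordering of $E^{\rm ori}(G)$ exactly when they occupy the two sides of one edge, one corner at each endpoint — this is the fact that converts the corner two‑coloring into a graph two‑coloring. By contrast, Steps 1 and 2 are essentially routine adaptations of Proposition \ref{prop:mod2_formula} and of the argument already used to prove Theorem \ref{thm:non-trivial_b}.
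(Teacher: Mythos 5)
Your proof is correct. It rests on the same two computational pillars as the paper's argument --- the mod $2$ formula $\xi^2_G=\sum_{e}\mu^2(e)$ (your Step 1 transplants Proposition \ref{prop:mod2_formula} to the punctured case via Corollary \ref{cor:punctured}, exactly as the paper does implicitly) and the identity $(\xi^2_G\cdot\gamma)\equiv\ell(\gamma)\bmod 2$ for edge cycles, which is the computation from the proof of Theorem \ref{thm:non-trivial_b} --- but the way you dispose of the combinatorics is genuinely different and cleaner. The paper never leaves the corner-labelling language: it tests $\xi^2_{\widetilde G}$ against each $\mu^2(f)$ using the specific cycles $\gamma(f)$ running from $f$ to $\bar f$, determines their parities from the labelling, and then has to undo the tail insertion to get back to $G$. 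You instead observe that, by non-degeneracy of the mod $2$ intersection form and the fact that edge cycles span $H_1(\Sigma_g^1;\mathbb{Z}_2)$, the vanishing of $\xi^2_G$ is equivalent to every closed edge walk having even length, i.e.\ to bipartiteness of $G$, and then identify ``balanced'' with ``bipartite'' by the purely combinatorial dictionary: consecutive corners in the boundary cycle sit at the two endpoints of their shared oriented edge, so a balanced corner colouring descends to a proper vertex $2$-colouring and conversely a bipartition lifts to the (unique up to swap) corner colouring. This reformulation removes the case analysis and the tail bookkeeping, and as a bonus it makes transparent why loops or odd cycles obstruct both conditions simultaneously. Your anticipated delicate point (consecutive corners versus sides of an edge) is indeed the only place requiring care, and your treatment of it is accurate.
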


\begin{proof}
Pick a corner $c$ of $G$ and write it as $c=(e,e^{\prime})$, where $e^{\prime}$ is next to $e$ in the cyclic ordering given to $E^{\rm ori}(G)$.
We give the label $\alpha$ to $c$ and extend this labeling to all other corners as above.
Take an admissible arc $\ell$ for $e$ and set $\widetilde{G}=\widetilde{G}(e,\ell)$.
The oriented edge $e$ is split at the middle point $v_e$ into two oriented edges.
We name them as $e_1, e_2\in E^{\rm ori}(\widetilde{G})$ so that $v_e$ is the end point of $e_1$.
We extend the labeling of corners of $G$ to that of corners of $\widetilde{G}$ by giving $\alpha$ to $(e_1,\bar{\ell})$ and $(\bar{e}_2,\bar{e}_1)$, and $\beta$ to $(\ell, e_2)$.

In view of Corollary \ref{cor:punctured}, the condition $\xi^2_G=0$ is equivalent to $\xi_{\widetilde{G}}^2=\mu^2(e_2)$.
Furthermore, since the mod $2$ homology markings $\{ \mu^2(f)\}_{f\in E(\widetilde{G})}$ generate the mod $2$ homology $H_1(\Sigma_{g,1};\mathbb{Z}_2)$, this condition is equivalent to the condition that $(\xi_{\widetilde{G}}^2\cdot \mu^2(f))=(\mu^2(e_2)\cdot \mu^2(f))$ for any $f\in E(\widetilde{G})$.

Assume that $G$ is balanced. For any vertex of $\widetilde{G}$ other than $v_e$, the three corners about it is labeled by the same symbol.
Let $f\in E(\widetilde{G})$. 
Let $\gamma(f)$ be the edge cycle following consecutive oriented edges of $\widetilde{G}$ from $f$ to $\bar{f}$, where we give the preferred orientation to $f$.
The mod $2$ homology class $\mu^2(f)$ is represented by $\gamma(f)$.
By the property of the labeling, the length of this edges cycle is odd if $f\prec \bar{e}_2 \prec \bar{f}$ (this also implies $f\neq e_2$), and is even otherwise.
Note that the condition $f\prec \bar{e}_2 \prec \bar{f}$ is equivalent to $(\mu^2(e_2)\cdot \mu^2(f))=1$.
Hence $(\xi_{\widetilde{G}}^2\cdot \mu^2(f))=({\rm the\ length\ of\ } \gamma(f))=1$ if and only if $(\mu^2(e_2)\cdot \mu^2(f))=1$.
Therefore, $\xi^2_G=0$.

On the other hand, assume that $\xi^2_G=0$.
Then for $f\in E(\widetilde{G})$, the length of $\gamma(f)$ is odd if and only if $f\prec \bar{e}_2 \prec \bar{f}$.
Now we remove the tail from $\widetilde{G}$ and go back to $G$.
Then $\gamma(f)$ is reduced to an edge cycle of $G$.
Its length is 1 less than the length of $\gamma(f)$ if $f\prec \bar{e}_2 \prec \bar{f}$, and is the same as the length of $\gamma(f)$ otherwise.
This implies that the reduced edge cycle of $G$ has even length.
Since $f$ can be arbitrary, this shows that $G$ is balanced.
\end{proof}

\section{Mod $2$ reduction and spin structures}
\label{sec:spin}

In this section, we give a topological interpretation of the mod 2 reduction $\xi^2_G$.
Namely, we show that to any trivalent fatgraph spine $G\subset \Sigma_{g,1}$ one can associate two distinct spin structures on $\Sigma_{g,1}$, and that $\xi^2_G$ is the difference of them.

We use the following description of the mod $2$ homology of $\Sigma_{g,1}$.

\begin{lem}
\label{lem:H_1(G)}
Let $G$ be a fatgraph spine of $\Sigma_{g,1}$.
For $v\in V^{\rm int}(G)$, let $\{ e_i^v\}_i$ be the unoriented edges of $G$ having $v$ as an end point.
If there is an edge loop based at $v$, we count it twice.
Then the mod $2$ homology marking induces an isomorphism
$$
H_1(\Sigma_{g,1};\mathbb{Z}_2)\cong
\bigoplus_{e\in E(G)} \mathbb{Z}_2 e \left/ \sum_{v\in V^{\rm int}(G)} \mathbb{Z}_2(\sum_i e_i^v) \right..
$$
\end{lem}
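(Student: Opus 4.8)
The plan is to realize both sides of the claimed isomorphism in terms of the $\mathbb{Z}_2$-(co)chain data of the graph $G$ and then match dimensions. Since $\iota\colon G\hookrightarrow\Sigma_{g,1}$ is a homotopy equivalence, $H_1(\Sigma_{g,1};\mathbb{Z}_2)$ has dimension $2g$, and by property (2) of homology markings the linear map $\mu^2\colon\bigoplus_{e\in E(G)}\mathbb{Z}_2 e\to H_1(\Sigma_{g,1};\mathbb{Z}_2)$, $e\mapsto\mu^2(e)$, is surjective. By property (3) reduced mod $2$ -- using that for an edge loop at $v$ both orientations occur in $E^{\rm ori}_v(G)$, so that such a loop contributes $2\mu^2=0$ -- each element $\sum_i e_i^v$ (with a loop at $v$ counted twice, hence contributing $0$) lies in $\ker\mu^2$. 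Thus $\mu^2$ descends to a surjection out of the displayed quotient, and the task reduces to showing this quotient map is injective, i.e. that $\ker\mu^2$ is exactly the span $R$ of $\{\sum_i e_i^v\}_{v\in V^{\rm int}(G)}$.

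First I would carry out the dimension count. Since $G\simeq\Sigma_{g,1}$ is homotopy equivalent to a wedge of $2g$ circles, $\chi(G)=\#V(G)-\#E(G)=1-2g$; as $G$ has exactly one univalent vertex $p$ and $V^{\rm int}(G)=V(G)\setminus\{p\}$, this gives $\#V^{\rm int}(G)=\#E(G)-2g$. On the other hand $\dim\ker\mu^2=\#E(G)-\dim H_1(\Sigma_{g,1};\mathbb{Z}_2)=\#E(G)-2g=\#V^{\rm int}(G)$. Hence it suffices to prove $\dim R=\#V^{\rm int}(G)$, i.e. that the $\#V^{\rm int}(G)$ vectors $\sum_i e_i^v$ are linearly independent in $\bigoplus_{e\in E(G)}\mathbb{Z}_2 e$.

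For the independence I would observe that $\sum_i e_i^v$ is precisely $\delta v$, the image of the vertex $v$ under the simplicial coboundary $\delta\colon C_0(G;\mathbb{Z}_2)\to C_1(G;\mathbb{Z}_2)$ dual to the graph boundary $\partial$; the convention of counting a loop twice is exactly what makes this identity hold mod $2$, since a loop contributes with multiplicity $2$ to $\partial$ and so not at all to $\delta$. Therefore $R=\delta\big(\bigoplus_{v\in V^{\rm int}(G)}\mathbb{Z}_2 v\big)$, and independence becomes the injectivity of $\delta$ on $\bigoplus_{v\in V^{\rm int}(G)}\mathbb{Z}_2 v$. But $\ker\delta=H^0(G;\mathbb{Z}_2)$ is one-dimensional because $G$ is connected, spanned by $\sum_{v\in V(G)}v$, and this generator has nonzero $p$-coordinate, so it does not lie in $\bigoplus_{v\in V^{\rm int}(G)}\mathbb{Z}_2 v$. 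Thus $\delta$ is injective on that subspace, $\dim R=\#V^{\rm int}(G)$, forcing $\ker\mu^2=R$ and completing the proof.

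I expect the only delicate points to be bookkeeping: checking that ``count an edge loop twice'' is consistent with both the mod $2$ reduction of property (3) and the combinatorics of $\delta$, and making sure that the hypothesis $p\notin V^{\rm int}(G)$ is genuinely used -- it is precisely what keeps $\ker\delta$ from meeting the internal-vertex subspace. As an alternative to the Euler-characteristic count, one could instead invoke the nondegeneracy of the mod $2$ intersection form on $H_1(\Sigma_{g,1};\mathbb{Z}_2)$: under it, $\mu^2(e)$ is dual to the functional ``coefficient of $e$'' on edge cycles of $G$, so $\ker\mu^2$ is the annihilator of the cycle space $Z_1(G;\mathbb{Z}_2)$, which coincides with the coboundary space ${\rm im}\,\delta=R$; this route avoids the counting but uses slightly more structure.
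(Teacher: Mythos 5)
Your proof is correct, but it takes a genuinely different route from the paper's. The paper argues geometrically and very briefly: the dual loops $\hat{e}$, $e\in E(G)$, are arranged to meet only at a single point $q\in\partial\Sigma_{g,1}$, so that they form the $1$-skeleton of a cell decomposition of $\Sigma_{g,1}$ whose $2$-cells correspond to the internal vertices of $G$; the boundary of the $2$-cell at $v$ runs over exactly the loops $\hat{e}_i^v$ (twice over a loop edge at $v$), so the displayed quotient is literally the mod $2$ cellular $H_1$ of that decomposition and the compatibility with $\mu^2$ is tautological. You instead work entirely in the chain groups of the graph: surjectivity of $\mu^2$ from property (2), the relations lying in $\ker\mu^2$ from the mod $2$ reduction of property (3), the count $\dim\ker\mu^2=\#E(G)-2g=\#V^{\rm int}(G)$ from $\chi(G)=1-2g$, and independence of the relations via the identification $\sum_i e_i^v=\delta v$ together with $\ker\delta=H^0(G;\mathbb{Z}_2)=\mathbb{Z}_2\cdot\sum_{v\in V(G)}v$, which meets $\bigoplus_{v\in V^{\rm int}(G)}\mathbb{Z}_2 v$ trivially precisely because the univalent vertex is omitted. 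The paper's route buys brevity at the cost of asserting the dual cell structure and its attaching maps without verification; yours is longer but purely combinatorial and makes explicit where the exclusion of the univalent vertex is actually used. Your alternative ending via nondegeneracy of the intersection form and $(\ker\partial)^{\perp}={\rm im}\,\delta$ is also sound, provided you add the one-line observation that $\delta p=\sum_{v\in V^{\rm int}(G)}\delta v$, so that ${\rm im}\,\delta$ is already spanned by the internal-vertex relations.
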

\begin{proof}
Recall from \S \ref{sec:fatgraph} that we associate an oriented simple loop $\hat{e}$ to each (oriented) edge $e$.
In the proof of this lemma we forget the orientation of $e$ and $\hat{e}$.
We can arrange that the simple loops $\{ \hat{e} \}_{e\in E(G)}$ share only one point $q\in \partial \Sigma_{g,1}$, and that if $t$ is the tail of $G$ then $\hat{t}=\partial \Sigma_{g,1}$ with basepoint $q$.
Then we obtain a cell decomposition of $\Sigma_{g,1}$ whose $1$-cells coincide with $\{ \hat{e} \}_{e\in E(G)}$.
Now the right hand side of the assertion can be identified with the first mod $2$ cellular homology group of this cell decomposition.
\end{proof}

Recall that a \emph{spin structure} on $\Sigma_{g,1}$ is an element $w\in H^1(UT\Sigma_{g,1};\mathbb{Z}_2)$, where $UT\Sigma_{g,1}$ is the unit tangent bundle of $\Sigma_{g,1}$ (with respect to some Riemannian metric), such that the restriction of $w$ to a fiber of the projection $UT\Sigma_{g,1}\to \Sigma_{g,1}$ is non-trivial.
As Johnson \cite{JohSpin} showed, the set of spin structures on $\Sigma_{g,1}$ is naturally identified with the set of quadratic forms on $H_1(\Sigma_{g,1};\mathbb{Z}_2)$.
Here, a map $q\colon H_1(\Sigma_{g,1};\mathbb{Z}_2)\to \mathbb{Z}_2$ is called a \emph{quadratic form} on $H_1(\Sigma_{g,1};\mathbb{Z}_2)$ if it satisfies
$$
q(x+y)=q(x)+q(y)+(x\cdot y)
$$
for any $x,y\in H_1(\Sigma_{g,1};\mathbb{Z}_2)$.
The set of spin structures on $\Sigma_{g,1}$ is a torsor under the action of $H^1(\Sigma_{g,1};\mathbb{Z}_2)$.
In other words, the difference of two quadratic forms on $H_1(\Sigma_{g,1};\mathbb{Z}_2)$ can be written as a uniquely determined element of ${\rm Hom}(H_1(\Sigma_{g,1};\mathbb{Z}_2),\mathbb{Z}_2)\cong H^1(\Sigma_{g,1};\mathbb{Z}_2)$.
Note that using the mod $2$ intersection paring, we have a natural isomorphism
\begin{equation}
\label{eq:pd}
H_1(\Sigma_{g,1};\mathbb{Z}_2) \cong {\rm Hom}(H_1(\Sigma_{g,1};\mathbb{Z}_2),\mathbb{Z}_2),
\quad x\mapsto [y\mapsto (x\cdot y)].
\end{equation}

In what follows, $G$ is a trivalent fatgraph spine of $\Sigma_{g,1}$.
The following result gives an identification of certain $\mathbb{Z}_2$-valued functions on $E(G)$ with the set of quadratic forms on $H_1(\Sigma_{g,1};\mathbb{Z}_2)$, thus with the set of spin structures on $\Sigma_{g,1}$ via Johnson's result stated above.

\begin{thm}
\label{thm:Q(tau)}
Let $G$ be a trivalent fatgraph spine of $\Sigma_{g,1}$.
Let $Q(G)$ be the set of maps $q\colon E(G)\to \mathbb{Z}_2$ such that for any $v\in V^{\rm int}(G)$, the sum of values of $q$ at the three edges having $v$ as an end point is $0$ if $v$ is of type $1$, and is $1$ if $v$ is of type $2$.
Then there is a natural bijection from $Q(G)$ to the set of quadratic forms on $H_1(\Sigma_{g,1};\mathbb{Z}_2)$.
\end{thm}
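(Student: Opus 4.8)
The plan is to identify both $Q(G)$ and the set of quadratic forms as torsors over $H^1(\Sigma_{g,1};\mathbb{Z}_2)$ and to produce between them an explicit, equivariant bijection. \emph{Setup.} First I would note that the $4g-1$ vertex equations cutting out $Q(G)$ inside the $(6g-1)$-dimensional space of maps $E(G)\to\mathbb{Z}_2$ are linearly independent over $\mathbb{Z}_2$: a vanishing combination indexed by a set $S\subseteq V^{\rm int}(G)$ would force $S$ to be a union of connected components of $G$ with its loop edges and its tail removed, and to avoid the vertex at which the tail is attached, hence $S=\emptyset$ by connectedness of $G$ (the counts $|E(G)|=6g-1$, $|V^{\rm int}(G)|=4g-1$ being those recalled in the proof of Proposition \ref{prop:type1_2}). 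Thus $Q(G)$ is a nonempty affine space of dimension $2g$, i.e. a torsor over $Q_0:=\{q_0\colon E(G)\to\mathbb{Z}_2\mid \sum_{e\ni v}q_0(e)=0 \ \forall v\in V^{\rm int}(G)\}$; and $Q_0\cong H^1(\Sigma_{g,1};\mathbb{Z}_2)$ by dualizing Lemma \ref{lem:H_1(G)}, since an element of $Q_0$ annihilates every vertex relation and so descends to a homomorphism $H_1(\Sigma_{g,1};\mathbb{Z}_2)\to\mathbb{Z}_2$. On the other side, the mod $2$ intersection form on $H_1(\Sigma_{g,1};\mathbb{Z}_2)\cong\mathbb{Z}_2^{2g}$ is non-degenerate (as $\partial\Sigma_{g,1}$ is nullhomologous), so by Johnson's result recalled above the spin structures, i.e. the quadratic forms refining it, form a torsor over ${\rm Hom}(H_1(\Sigma_{g,1};\mathbb{Z}_2),\mathbb{Z}_2)\cong H^1(\Sigma_{g,1};\mathbb{Z}_2)$ as well; in particular both sets have $2^{2g}$ elements.

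\emph{The map.} To $q\in Q(G)$ I would attach the function $\widetilde q$ defined, via Lemma \ref{lem:H_1(G)}, by writing a class as $x=\sum_{e\in E'}\mu^2(e)$ for a subset $E'\subseteq E(G)$ and setting
$$
\widetilde q(x)=\sum_{e\in E'}\bigl(q(e)+c_e\bigr)+\sum_{\{e,e'\}\subseteq E'}\bigl(\mu^2(e)\cdot\mu^2(e')\bigr)\pmod 2,
$$
where $c_e\in\mathbb{Z}_2$ is the mod $2$ winding number of the loop $\hat e$ relative to the line field tangent to $G$ (the ``reference framing'' carried by the fatgraph). Once $\widetilde q$ is shown to be well defined it is manifestly a quadratic form: computing $\widetilde q(x+y)$ from the overlaid multiset $E'\cup E''$ gives $\widetilde q(x)+\widetilde q(y)+\sum_{e\in E',e'\in E''}(\mu^2(e)\cdot\mu^2(e'))=\widetilde q(x)+\widetilde q(y)+(x\cdot y)$ by bilinearity, repeated edges dropping out because $(\mu^2(e)\cdot\mu^2(e))=0$. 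So the only point is independence of the representative $E'$, i.e. invariance under replacing $E'$ by its symmetric difference with the set of edges at a vertex $v$. Here the ``cross'' intersection terms cancel because $\sum_{e\ni v}\mu^2(e)=0$, and the change reduces to the local identity
$$
\sum_{e\ni v}q(e)\ \equiv\ \sum_{e\ni v}c_e+(\mu^2(a)\cdot\mu^2(b))+(\mu^2(b)\cdot\mu^2(c))+(\mu^2(c)\cdot\mu^2(a))\pmod 2
$$
at each $v$, where $a,b,c$ are the edges at $v$; one computes directly from Figure \ref{fig:2types} and the meaning of $c_e$ that the right-hand side is $0$ when $v$ is of type $1$ and $1$ when $v$ is of type $2$ — exactly the condition defining $Q(G)$, which is where the $0$-versus-$1$ dichotomy of the statement is forced.

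\emph{Conclusion and obstacle.} Finally I would check equivariance: for $q_0\in Q_0$, $\widetilde{q+q_0}-\widetilde q$ is the homomorphism $\sum_{e\in E'}\mu^2(e)\mapsto\sum_{e\in E'}q_0(e)$, well defined on $H_1(\Sigma_{g,1};\mathbb{Z}_2)$ (as $q_0$ kills the vertex relations) and nonzero whenever $q_0\neq0$; under the isomorphism $Q_0\cong H^1(\Sigma_{g,1};\mathbb{Z}_2)$ of the first step this is precisely $q_0$, and under the Poincar\'e duality (\ref{eq:pd}) it is precisely the element of $H^1(\Sigma_{g,1};\mathbb{Z}_2)$ by which the corresponding spin structures differ. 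An equivariant map of torsors over the same group is automatically a bijection, which gives the theorem. The main obstacle is the local vertex computation in the second step: one must make the mod $2$ winding-number bookkeeping of the loops $\hat e$ against the fatgraph's tangent line field precise enough that the displayed local identity holds on the nose, and in particular that its normalization is governed by exactly the type-$1$/type-$2$ distinction for trivalent vertices introduced before Proposition \ref{prop:type1_2}; everything else is formal torsor algebra together with the routine edge–vertex counts.
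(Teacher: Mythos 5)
Your overall architecture is essentially the paper's: you extend $q$ to the free $\mathbb{Z}_2$-module on $E(G)$ by a formula whose quadratic part is the sum of pairwise mod~$2$ intersection numbers, check the quadratic identity, and reduce well-definedness to a local identity at each trivalent vertex whose $0$/$1$ normalization is exactly the type-$1$/type-$2$ condition defining $Q(G)$. The torsor packaging, the linear-independence count showing $Q(G)\neq\emptyset$ of size $2^{2g}$, and the identification $Q_0\cong H^1(\Sigma_{g,1};\mathbb{Z}_2)$ are correct and are a pleasant addition (the paper instead exhibits the inverse directly, by composing a quadratic form with $\mu^2$). But the step you yourself flag as the ``main obstacle'' is precisely where your proof is incomplete, and it is the entire content of the paper's argument: one must verify that
$$
(\mu^2(e_1^v)\cdot\mu^2(e_2^v))+(\mu^2(e_2^v)\cdot\mu^2(e_3^v))+(\mu^2(e_3^v)\cdot\mu^2(e_1^v))
$$
equals $0$ when $v$ is of type $1$ and $1$ when $v$ is of type $2$. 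This is immediate from Figure \ref{fig:2types}: for a type-$1$ vertex the three dual loops $\hat e_i^v$ can be made pairwise disjoint, while for a type-$2$ vertex each pair meets once. You assert this verification rather than perform it, so the decisive computation is missing.

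Moreover, the correction term $c_e$ you insert is both underdefined and unnecessary. ``The mod $2$ winding number of $\hat e$ relative to the line field tangent to $G$'' is not well-defined as stated (the tangent line field does not extend over the vertices, and a line field only gives half-integer windings), and the term cannot carry the type-$1$/type-$2$ dichotomy in any case: that dichotomy is already fully accounted for by the intersection terms above. For your local identity to hold you would additionally need $\sum_{e\ni v}c_e\equiv 0$ at every internal vertex; if that holds, then $c\in Q_0$ and your map is just the paper's bijection translated by a fixed element of $H^1(\Sigma_{g,1};\mathbb{Z}_2)$ (harmless but pointless), and if it fails at some vertex, your map is not well-defined on $Q(G)$ at all. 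Drop $c_e$, carry out the Figure \ref{fig:2types} computation, and your argument closes up and coincides with the paper's.
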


\begin{proof}
Given a map $q\colon E(G)\to \mathbb{Z}_2$, we extend $q$ to a map from the free $\mathbb{Z}_2$-module generated by $E(G)$ by
\begin{equation}
\label{eq:q_extended}
q\left( \sum_{e\in E(G)} m_e e\right):=\sum_{e\in E(G)} m_e q(e)+\sum_{e\prec e^{\prime}} m_e m_{e^{\prime}} (\mu^2(e)\cdot \mu^2(e^{\prime})),
\end{equation}
for $m_e\in \mathbb{Z}_2$, $e\in E(G)$.
Here $(\ \cdot \ )$ is the mod $2$ intersection pairing and we give the preferred orientation to each element of $E(G)$.
By a direct computation, we can check that for any $x,y\in \bigoplus_{e\in E(G)} \mathbb{Z}_2 e$,
\begin{equation}
\label{eq:quadratic}
q(x+y)=q(x)+q(y)+(x\cdot y).
\end{equation}
Here $(x\cdot y)$ is the mod $2$ intersection pairing of the homology class determined by $x$ and $y$ through the isomorphism in Lemma \ref{lem:H_1(G)}.

We claim that if $q\in Q(G)$, then for any $v\in V^{\rm int}(G)$,
$$q(e_1^v+e_2^v+e_3^v)=0.$$
By (\ref{eq:q_extended}), this condition is equivalent to the following.
\begin{equation}
\label{eq:extend_3}
\sum_{i=1}^3q(e_i^v)+(\mu^2(e_1^v)\cdot \mu^2(e_2^v))+(\mu^2(e_1^v)\cdot \mu^2(e_3^v))+(\mu^2(e_2^v)\cdot \mu^2(e_3^v))=0.
\end{equation}
If $v$ is of type $1$, then $(\mu^2(e_i^v)\cdot \mu^2(e_j^v))=0$ for any $1\le i,j\le 3$.
If $v$ is of type $2$, then $(\mu^2(e_i^v)\cdot \mu^2(e_j^v))=1$ for any $1\le i,j\le 3$ with $i\neq j$.
See Figure \ref{fig:2types}.
Therefore, the condition (\ref{eq:extend_3}) is exactly equivalent to the condition for $q$ being an element of $Q(G)$.
This proves the claim.

By the claim, Lemma \ref{lem:H_1(G)}, and (\ref{eq:quadratic}), it follows that the map $q$ induces a quadratic form on $H_1(\Sigma_{g,1};\mathbb{Z}_2)$.
The above construction gives a map from $Q(G)$ to the set of quadratic forms on $H_1(\Sigma_{g,1};\mathbb{Z}_2)$, and the inverse of this map is given by composing any quadratic form on $H_1(\Sigma_{g,1};\mathbb{Z}_2)$ with the mod $2$ homology marking $\mu^2\colon E(G)\to H_1(\Sigma_{g,1};\mathbb{Z}_2)$.
\end{proof}

We record how the set $Q(G)$ changes under a flip.

\begin{prop}
\label{prop:q_flip}
Let $W=W_e$ be a flip from $G$ to $G'$.
Then the bijection in Theorem \ref{thm:Q(tau)} induces a bijection from $Q(G)$ to $Q(G')$, which maps a given $q\in Q(G)$ to the element $q'\in Q(G')$ defined as follows.
\begin{itemize}
\item For any edge $f$ in $E(G')\setminus \{e' \} \cong E(G)\setminus \{ e\}$, we have $q'(f)=q(f)$.
\item We adopt the notation in Figure \ref{fig:6cases}, and assume that in each case $G$ and $G'$ correspond to the left and right pictures, respectively. Then the value $q'(e')$ is given by the following formula.
\begin{align*}
I:& \ q'(e')=q(b)+q(c)=q(a)+q(d), \\
II:& \ q'(e')=q(b)+q(c)=q(a)+q(d)+1, \\
III:& \ q'(e')=q(b)+q(c)+1=q(a)+q(d), \\
IV:& \ q'(e')=q(b)+q(c)+1=q(a)+q(d), \\
V:& \ q'(e')=q(b)+q(c)=q(a)+q(d)+1, \\
VI:& \ q'(e')=q(b)+q(c)+1=q(a)+q(d)+1.
\end{align*}
\end{itemize}
By a suitable replacement of labels of edges, one can similarly obtain a formula for $q'$ in terms of $q$ for the case where $G$ and $G'$ correspond to the right and left pictures, respectively, in each case in Figure \ref{fig:6cases}.
\end{prop}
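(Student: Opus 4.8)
The plan is to exploit that, by Theorem \ref{thm:Q(tau)}, both $Q(G)$ and $Q(G')$ are canonically identified with the single set of quadratic forms on $H_1(\Sigma_{g,1};\mathbb{Z}_2)$; the asserted bijection $Q(G)\to Q(G')$ is, by definition, the composite of these two identifications. So I would fix $q\in Q(G)$, let $\bar q\colon H_1(\Sigma_{g,1};\mathbb{Z}_2)\to\mathbb{Z}_2$ be the associated quadratic form (characterized by $\bar q\circ\mu^2=q$ on $E(G)$), and let $q'\in Q(G')$ be the map satisfying $\bar q\circ\mu^2=q'$ on $E(G')$, where $\mu^2$ now denotes the mod $2$ homology marking of $G'$. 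Under the natural identification $E(G)\setminus\{e\}\cong E(G')\setminus\{e'\}$ of \S\ref{sec:fatgraph}, corresponding edges have equal homology markings, so for $f\in E(G')\setminus\{e'\}$ we immediately get $q'(f)=\bar q(\mu^2(f))=q(f)$. This disposes of the first bullet.

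For the second bullet I would first compute the mod $2$ homology marking of $e'$. Applying the vertex relation $\sum_{e\in E^{\rm ori}_v(G')}\mu(e)=0$ at the two endpoints of $e'$ and reading the local configuration off the right part of Figure \ref{fig:flip}, one finds $\mu(e')=\pm(\mu(b)+\mu(c))$ and $\mu(e')=\mp(\mu(a)+\mu(d))$ for suitable orientations; in particular, mod $2$,
\[
\mu^2(e')=\mu^2(b)+\mu^2(c)=\mu^2(a)+\mu^2(d).
\]
Since $\bar q$ is a quadratic form, $\bar q(x+y)=\bar q(x)+\bar q(y)+(x\cdot y)$ with $(\ \cdot\ )$ the mod $2$ intersection pairing, so
\[
q'(e')=q(b)+q(c)+(\mu^2(b)\cdot\mu^2(c))=q(a)+q(d)+(\mu^2(a)\cdot\mu^2(d)).
\]
It then remains only to evaluate the two intersection numbers $(\mu^2(b)\cdot\mu^2(c))$ and $(\mu^2(a)\cdot\mu^2(d))$ in each of the six configurations of Figure \ref{fig:6cases}.

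For this last point I would use that $(\mu^2(x)\cdot\mu^2(y))=1$ precisely when the chords $\hat x$ and $\hat y$ cross in the cut-open disk, i.e.\ when the two boundary arcs of $x$ separate the two boundary arcs of $y$ on $\partial D_G$; this is determined by the total ordering $\prec$, hence by which of the cases I--VI we are in, exactly as in the sample computation in Figure \ref{fig:acbd}. Running through the six cases reads off the values $0$ or $1$ as tabulated, and substituting them into the displayed identity yields the six stated formulas; the variant for $G,G'$ playing the roles of the right and left pictures then follows formally, since $W_e$ and $W_{e'}$ are mutually inverse flips and the required bijection is the inverse of the one just constructed, read with the edge labels relabelled accordingly. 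The step I expect to be the real work is this case-by-case determination of the cyclic positions of the six arcs $a,\bar a,\dots,d,\bar d$ on $\partial D_G$ and of the resulting crossing numbers: everything else is formal once Theorem \ref{thm:Q(tau)} is in hand, but this bookkeeping must be carried out carefully, and it can be read directly from Figures \ref{fig:flip}, \ref{fig:acbd} and \ref{fig:6cases}.
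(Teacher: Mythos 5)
Your argument is correct and rests on the same foundation as the paper's proof, namely the identification of $Q(G)$ and $Q(G')$ with quadratic forms on $H_1(\Sigma_{g,1};\mathbb{Z}_2)$ from Theorem \ref{thm:Q(tau)}; the first bullet is handled identically. For the second bullet you take a slightly different, though equivalent, route: you compute $q'(e')=\bar q(\mu^2(b)+\mu^2(c))$ via the quadratic identity, which produces the correction term $(\mu^2(b)\cdot\mu^2(c))$, and you must then evaluate this intersection number in each of the six configurations. The paper short-circuits this step: since $q'$ is already known to lie in $Q(G')$ and is determined on $E(G')\setminus\{e'\}$ by the first bullet, the value $q'(e')$ is forced by the defining vertex relation of $Q(G')$ at either endpoint of $e'$, so the $0/1$ correction is simply the type of that endpoint, read off from Figure \ref{fig:6cases}. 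The two corrections agree because, as observed in the proof of Theorem \ref{thm:Q(tau)}, the three edges at a type $1$ (resp.\ type $2$) vertex have pairwise mod $2$ intersection number $0$ (resp.\ $1$); thus your bookkeeping of $(\mu^2(b)\cdot\mu^2(c))$ and $(\mu^2(a)\cdot\mu^2(d))$ is exactly the determination of the types of the two new vertices of $G'$. Either way the remaining work is the same finite case check, which you correctly identify and describe how to carry out, and your treatment of the reversed (right-to-left) situation by inverting the flip is also fine.
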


\begin{proof}
To prove the first condition, note that the mod $2$ homology marking of $f$ as an edge of $E(G)$ is the same as that of $f$ as an edge of $E(G')$.
The second condition follows from the first condition and the defining relation for elements of $Q(G')$. For example, in case VI, two end points of $e'$ are of type $2$, and hence we have $q'(b)+q'(c)+q'(e')=q'(a)+q'(d)+q'(e')=1$.
\end{proof}

\begin{rem}
The description of spin structures on $\Sigma_{g,1}$ given in Theorem \ref{thm:Q(tau)} and how it changes under a flip as in Proposition \ref{prop:q_flip} was pointed out by Robert Penner \cite{PenPriv}.
Recently, Penner and Zeitlin \cite{PZ15} give another natural description of spin structures on a punctured surface in terms of orientations on a trivalent fatgraph spine of the surface, and they also show how it changes under a flip.
In other words, Penner and Zeitlin give a lift of the action of the mapping class group on the set of quadratic forms to the action of the Ptolemy groupoid, and the present construction gives another lift.
It should be remarked that while their description works for any surfaces with multiple punctures, our description here is for a once (punctured/bordered) surface.
It is an interesting question whether ours generalizes to any (punctured/bordered) surface.
\end{rem}

Now we give the preferred orientation (Definition \ref{dfn:total_order}) to each unoriented edge of $G$ and use the same letter for the resulting oriented edge of $G$.
For example, if we write $e\prec f\prec \bar{e}$ or $e\prec \bar{f}\prec \bar{e}$ for $e,f\in E(G)$, we understand that $e$ and $f$ have the preferred orientation.
Take $e\in E(G)$.
We define an element $q_G(e), \bar{q}_G(e)\in \mathbb{Z}_2$ by
$$
q_G(e)=\# \{ f\in E(G)|\ e\prec f\prec \bar{e} \} \ {\rm mod\ } 2,
$$
and
$$
\bar{q}_G(e)=\# \{ f\in E(G)|\ e\prec \bar{f}\prec \bar{e} \} \ {\rm mod\ } 2.
$$
Here $\#$ means the number of elements of a set.

\begin{prop}
\label{prop:spin}
The maps $q_G$ and $\bar{q}_G$ are elements of $Q(G)$.
In particular, they induce quadratic forms on $H_1(\Sigma_{g,1};\mathbb{Z}_2)$.
\end{prop}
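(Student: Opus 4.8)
The plan is to reduce the statement to a relation at each trivalent vertex. By the definition of $Q(G)$ (equivalently, by the reformulation used in the proof of Theorem \ref{thm:Q(tau)}), a map $q\colon E(G)\to\mathbb{Z}_2$ lies in $Q(G)$ if and only if for every $v\in V^{\rm int}(G)$, writing $e_1,e_2,e_3\in E^{\rm ori}_v(G)$ for the three oriented edges toward $v$ named as before Proposition \ref{prop:type1_2} and identifying each $e_i$ with its underlying unoriented edge carrying the preferred orientation, one has $q(e_1)+q(e_2)+q(e_3)=0$ when $v$ is of type $1$ and $=1$ when $v$ is of type $2$. So it suffices to verify this vertex relation for $q_G$ and for $\bar q_G$; the last assertion of the proposition will then follow at once from Theorem \ref{thm:Q(tau)}.

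The tool I would use is a chord-diagram model for $\partial D_G$. View $\partial D_G$ as an oriented circle carrying the $12g-2$ marked points $E^{\rm ori}(G)$, listed in the cyclic order induced by $\prec$ (that is, clockwise from $p$). Each unoriented edge $\underline e$ becomes a chord joining its two marked points, the preferred one $s_{\underline e}$ and the other one $t_{\underline e}$; since $s_{\underline e}\prec t_{\underline e}$, the open arc $A_{\underline e}$ running clockwise from $s_{\underline e}$ to $t_{\underline e}$ does not contain $p$. In this language, $q_G(\underline e)\equiv\#\{\underline f\in E(G):s_{\underline f}\in A_{\underline e}\}$ and $\bar q_G(\underline e)\equiv\#\{\underline f\in E(G):t_{\underline f}\in A_{\underline e}\}$ modulo $2$.

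Now fix a trivalent vertex $v$ and put $A_i=A_{\underline{e_i}}$ for $i=1,2,3$. Writing $q_G(e_i)\equiv\sum_{\underline f}\mathbf{1}[s_{\underline f}\in A_i]$, I would split $\sum_{i=1}^3 q_G(e_i)=\sum_{\underline f}\#\{i:s_{\underline f}\in A_i\}$ into the \emph{internal} part, where $\underline f$ is one of $\underline{e_1},\underline{e_2},\underline{e_3}$, and the \emph{external} part, where $\underline f$ is any other edge. Reading the endpoints of $A_1,A_2,A_3$ directly off the two possible orderings of $e_1^{\pm},e_2^{\pm},e_3^{\pm}$ displayed before Proposition \ref{prop:type1_2}, one checks that the internal part equals $2$ if $v$ is of type $1$ and equals $3$ if $v$ is of type $2$; this already produces the required values $0$ and $1$ modulo $2$. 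For the external part, the key point is that the three arcs $A_1,A_2,A_3$ subdivide $\partial D_G$ into regions, and the regions on which $\#\{i:x\in A_i\}$ is odd are exactly the three \emph{corner regions} of $D_G$ at $v$ — the gaps lying between the pairs of marked points occupying the first and second, the third and fourth, and the fifth and sixth places in the relevant ordering — because at such a corner $\partial D_G$ turns immediately at $v$ from one edge of $G$ to the cyclically next one, so no marked point of $\partial D_G$ lies in it. One verifies in each of the two types, by inspection of the displayed orderings, both that the odd regions are these three gaps and that these three gaps are the corners at $v$. Hence every external $s_{\underline f}$ lies in a region of even $A_i$-count, the external part vanishes modulo $2$, and $\sum_i q_G(e_i)$ has the asserted value. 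The same argument with $t_{\underline f}$ in place of $s_{\underline f}$ settles $\bar q_G$: the internal contribution is again $2$ (type $1$) or $3$ (type $2$), and the external contribution is again even, for the same reason. This gives $q_G,\bar q_G\in Q(G)$.

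The step I expect to be the main obstacle is the identification inside the external part: correctly locating the three corners of the cut-open disk $D_G$ at $v$ among the six oriented edges around $v$, and confirming that they coincide with the regions on which the $A_i$-count is odd. Once that ribbon-combinatorial point is pinned down in each of the two types, everything else is a short direct count from the two displayed orderings.
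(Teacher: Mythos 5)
Your proof is correct. The paper verifies the same vertex relation, but by a different bookkeeping: for each type it singles out one of the three edges and exhibits an explicit disjoint decomposition of its counting set $\{f\mid e\prec f\prec\bar e\}$ in terms of the other two, obtaining in $\mathbb{Z}_2$ the identities $q_G(e_1)=q_G(e_2)+q_G(e_3)$ (type $1$) and $q_G(e_2)=q_G(e_1)+q_G(e_3)+1$ (type $2$), from which membership in $Q(G)$ is immediate; the case of $\bar q_G$ is then declared analogous. Your argument instead sums over all three edges, swaps the order of summation, and reduces the external contribution to the observation that the regions of $\partial D_G$ on which the multiplicity $\#\{i\mid x\in A_i\}$ is odd are exactly the three empty corner gaps at $v$ — namely $(e_1,\bar e_2),(e_2,\bar e_3),(e_3,\bar e_1)$ in type $1$ and $(e_1,\bar e_2),(e_3,\bar e_1),(e_2,\bar e_3)$ in type $2$. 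I checked your internal counts ($2$ for type $1$, $3$ for type $2$, for both $s$- and $t$-endpoints) and the odd-region lists against the two displayed orderings, and they are all correct; the corner identification you flag as the delicate point is exactly the ribbon-combinatorial fact already encoded in Figure \ref{fig:2types} and in the paper's notion of corner from the proof of Lemma \ref{lem:odd_edge_cycle}. What your route buys is uniformity — $q_G$ and $\bar q_G$ are handled by literally the same computation, since the external part never distinguishes preferred from non-preferred endpoints — at the cost of needing the corner identification, which the paper's decomposition avoids.
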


\begin{proof}
We consider the case of $q_G$ only.

We work with Figure \ref{fig:2types}.
Suppose that $v$ is of type $1$. 
Then $e_1$, $\bar{e}_2$, and $\bar{e}_3$ have the preferred orientation, and we have a disjoint sum decomposition
\begin{align*}
& \{ f\in E(G)|\ e_1 \prec f\prec \bar{e}_1\} \\
=&\{ e_2, e_3\} \sqcup
\{ f\in E(G)|\ \bar{e}_2\prec f\prec e_2\} \sqcup \{ f\in E(G)|\ \bar{e}_3\prec f\prec e_3\}.
\end{align*}
This implies that $q_G(e_1)=q_G(e_2)+q_G(e_3)$.

Suppose that $v$ is of type $2$.
Then $e_1$, $\bar{e}_2$, and $e_3$ have the preferred orientation, and we have a disjoint sum decomposition
\begin{align*}
& \{ f\in E(G)|\ \bar{e}_2\prec f\prec e_2\} \\
=& (\{ f\in E(G)|\ e_1\prec f\prec \bar{e}_1\} \setminus \{ e_2 \} ) \sqcup \{ f\in E(G)|\ e_3\prec f\prec \bar{e}_3\}.
\end{align*}
This implies that $q_G(e_2)=q_G(e_1)+q_G(e_3)+1$.

Therefore, $q_G\in Q(G)$. By Theorem \ref{thm:Q(tau)}, $q_G$ induce a quadratic form on $H_1(\Sigma_{g,1};\mathbb{Z}_2)$.
\end{proof}

For simplicity, we use the same letter $q_G$ and $\bar{q}_G$ for the induced quadratic forms.
This construction of quadratic forms is $\mathcal{M}_{g,1}$-equivariant in the following sense.

\begin{prop}
\label{prop:spin_equiv}
Let $G$ be a trivalent fatgraph spine of $\Sigma_{g,1}$, and let $\varphi\in \mathcal{M}_{g,1}$.
Then we have
\begin{align*}
q_{\varphi(G)}\circ \varphi_* &=q_G, \\
\bar{q}_{\varphi(G)}\circ \varphi_* &=\bar{q}_G,
\end{align*}
where $\varphi_*$ is the automorphism of $H_1(\Sigma_{g,1};\mathbb{Z}_2)$ induced from $\varphi$.
\end{prop}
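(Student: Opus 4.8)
The plan is to trace through the construction of $q_G$ (and $\bar q_G$) and reduce the equivariance to two elementary naturality facts, after which one uses that a quadratic form is determined by its values on any generating set of $H_1(\Sigma_{g,1};\mathbb{Z}_2)$. Fix an orientation-preserving representative $\varphi$ fixing $\partial\Sigma_{g,1}$ pointwise; it induces a bijection $E(G)\to E(\varphi(G))$, $e\mapsto\varphi(e)$, and since all the data involved ($\prec$, the preferred orientation, the type of a vertex, the homology marking) depend only on isotopy classes rel $\partial\Sigma_{g,1}$, everything below is well defined. The first point is that $e\mapsto\varphi(e)$ respects the total orderings: as $\varphi$ fixes $p$ and preserves orientation, it carries the cut-open disk $D_G$ onto $D_{\varphi(G)}$ compatibly with the boundary and the clockwise direction, so $e\prec e'$ in $E^{\rm ori}(G)$ if and only if $\varphi(e)\prec\varphi(e')$ in $E^{\rm ori}(\varphi(G))$ (Definition \ref{dfn:total_order}); in particular preferably oriented edges go to preferably oriented edges and vertex types are preserved.

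The second point is that the homology marking is natural: for $e\in E^{\rm ori}(G)$ one may take the dual loop $\widehat{\varphi(e)}$ of $\varphi(e)$ in $\varphi(G)$ to be $\varphi(\hat e)$, since $\varphi$ preserves orientation and hence both defining conditions for the dual loop; thus $\varphi_*\mu(e)=\mu(\varphi(e))$, and reducing mod $2$, $\varphi_*\mu^2(e)=\mu^2(\varphi(e))$. Combining the two points, for every $e\in E(G)$ with its preferred orientation we get, from the bijectivity and order-compatibility of $e\mapsto\varphi(e)$,
\[
q_{\varphi(G)}(\varphi(e))=\#\{\,f'\in E(\varphi(G))\mid \varphi(e)\prec f'\prec\overline{\varphi(e)}\,\}=\#\{\,f\in E(G)\mid e\prec f\prec\bar e\,\}=q_G(e),
\]
and likewise $\bar q_{\varphi(G)}(\varphi(e))=\bar q_G(e)$ by the same computation with $\bar f'$, $\bar f$ in place of $f'$, $f$. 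Under the bijection of Theorem \ref{thm:Q(tau)} these function values are exactly the values of the induced quadratic forms on the generators $\mu^2(e)$ (resp.\ $\mu^2(\varphi(e))$) of $H_1(\Sigma_{g,1};\mathbb{Z}_2)$ (Lemma \ref{lem:H_1(G)}), so $(q_{\varphi(G)}\circ\varphi_*)(\mu^2(e))=q_{\varphi(G)}(\mu^2(\varphi(e)))=q_G(\mu^2(e))$ for all $e\in E(G)$, and similarly for $\bar q_G$.

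To finish, note that $\varphi_*$ preserves the mod $2$ intersection pairing, so $q_{\varphi(G)}\circ\varphi_*$ is again a quadratic form refining that pairing, and hence $q_{\varphi(G)}\circ\varphi_*-q_G$ is an element of ${\rm Hom}(H_1(\Sigma_{g,1};\mathbb{Z}_2),\mathbb{Z}_2)$; by the previous paragraph it vanishes on a generating set, hence is $0$, giving $q_{\varphi(G)}\circ\varphi_*=q_G$, and the same for $\bar q_G$. I expect no serious difficulty here: the only care needed is in the two naturality statements — this is exactly where the hypotheses that $\varphi$ fixes the boundary and preserves orientation enter — and in remembering at the last step that a quadratic form is not additive, so one must argue via the (linear) difference of two quadratic forms rather than concluding directly from their agreement on generators.
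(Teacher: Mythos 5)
Your proof is correct and follows essentially the same route as the paper: the paper simply observes that $\varphi$ induces a combinatorial isomorphism $\Phi$ of the free $\mathbb{Z}_2$-modules on edges with $q_{\varphi(G)}\circ\Phi=q_G$, and that $\Phi$ descends to $\varphi_*$ on homology. You have merely made explicit the naturality of $\prec$ and of $\mu$ under $\varphi$, and replaced the paper's direct comparison of the extended functions on the free module by the equivalent observation that the (linear) difference of the two quadratic forms vanishes on a generating set.
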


\begin{proof}
We consider the case of $q_G$ only.
Consider a homomorphism
$$
\Phi\colon \bigoplus_{e\in E(G)} \mathbb{Z}_2 e \to \bigoplus_{e^{\prime}\in E(\varphi(G))} \mathbb{Z}_2 e^{\prime},\quad \Phi(e)=\varphi(e).
$$
Since $\varphi$ gives a combinatorial isomorphism from $G$ to $\varphi(G)$, we have $q_{\varphi(G)}\circ \Phi=q_G$.
Now $\Phi$ induces the map $\varphi_*$ on the level of homology, and we conclude $q_{\varphi(G)}\circ \varphi_*=q_G$. 
\end{proof}

Finally, we compute the difference of $q_G$ and $\bar{q}_G$.

\begin{thm}
\label{thm:xi_spin}
Under the isomorphism (\ref{eq:pd}), we have
$$
q_G-\bar{q}_G=\xi^2_G.
$$
Moreover, we have $q_G \neq \bar{q}_G$.
\end{thm}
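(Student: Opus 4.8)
The plan is to identify the homomorphism $q_G-\bar q_G\colon H_1(\Sigma_{g,1};\mathbb{Z}_2)\to\mathbb{Z}_2$ with the element $\xi^2_G$ under the isomorphism (\ref{eq:pd}). Note first that $q_G-\bar q_G$ really is a homomorphism, since $q_G$ and $\bar q_G$ are quadratic forms with the same associated bilinear form (the mod $2$ intersection pairing), so their difference is additive. The map $y\mapsto(\xi^2_G\cdot y)$ is also a homomorphism, and by Lemma \ref{lem:H_1(G)} the classes $\mu^2(e)$ with $e\in E(G)$ generate $H_1(\Sigma_{g,1};\mathbb{Z}_2)$; hence it suffices to check that $(q_G-\bar q_G)(\mu^2(e))=(\xi^2_G\cdot\mu^2(e))$ for every $e\in E(G)$. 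By the construction of the quadratic form attached to an element of $Q(G)$ in the proof of Theorem \ref{thm:Q(tau)} (the inverse map there is precomposition with $\mu^2$), the left-hand side is just $q_G(e)-\bar q_G(e)=q_G(e)+\bar q_G(e)$.

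Next I would bring in the combinatorial description of the mod $2$ intersection pairing in terms of the total ordering $\prec$. Give every edge of $G$ its preferred orientation. Cutting $\Sigma_{g,1}$ along $G$ produces the disk $D_G$, whose boundary circle, read clockwise from $p$, realizes the ordering of $E^{\mathrm{ori}}(G)$, each unoriented edge $f$ contributing the two points labelled $f$ and $\bar f$. The loop $\hat e$ becomes a chord of $D_G$ joining the point $e$ to the point $\bar e$, and such a chord separates $\partial D_G$ into the open arc $\{g\in E^{\mathrm{ori}}(G):e\prec g\prec\bar e\}$ and its complement. Hence, for $f\in E(G)$, the two chords $\hat e$ and $\hat f$ are linked, i.e. $(\mu^2(e)\cdot\mu^2(f))=1$, precisely when exactly one of $f,\bar f$ lies in that open arc; and for $f=e$ neither $e$ nor $\bar e$ lies there, consistently with $(\mu^2(e)\cdot\mu^2(e))=0$. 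Summing over all $f\in E(G)$ and using the definitions of $q_G$ and $\bar q_G$ gives
$$
q_G(e)+\bar q_G(e)\equiv\#\{f\in E(G):e\prec f\prec\bar e\}+\#\{f\in E(G):e\prec\bar f\prec\bar e\}\equiv\sum_{f\in E(G)}(\mu^2(e)\cdot\mu^2(f))\pmod 2.
$$

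To finish, I would apply Proposition \ref{prop:mod2_formula}: the last sum equals $\bigl(\mu^2(e)\cdot\sum_{f\in E(G)}\mu^2(f)\bigr)=(\mu^2(e)\cdot\xi^2_G)=(\xi^2_G\cdot\mu^2(e))$. This establishes the desired identity on a generating set, so $q_G-\bar q_G=\xi^2_G$ under (\ref{eq:pd}). The remaining claim $q_G\neq\bar q_G$ is then immediate, since $\xi^2_G\neq0$ by Theorem \ref{thm:non-trivial_b}. The only delicate step is the middle one — the chord/disk identification of $(\mu^2(e)\cdot\mu^2(f))$ with the parity of the number of orientations of $f$ lying strictly between $e$ and $\bar e$ in the total ordering — which requires setting up the picture of $D_G$ carefully; everything else is bookkeeping together with the two earlier results cited.
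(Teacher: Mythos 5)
Your proposal is correct and follows essentially the same route as the paper: reduce to the generators $\mu^2(e)$, compute $q_G(e)+\bar q_G(e)$ as the parity of the number of oriented edges strictly between $e$ and $\bar e$, identify that parity with $\sum_{f}(\mu^2(e)\cdot\mu^2(f))$ via the chord picture in $D_G$, and conclude with Proposition \ref{prop:mod2_formula} and Theorem \ref{thm:non-trivial_b}. The only difference is that you spell out two points the paper leaves implicit (that $q_G-\bar q_G$ is a homomorphism, and the linking-of-chords computation of the mod $2$ pairing), which is fine.
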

 
\begin{proof}
For $e\in E(G)$, we have
\begin{align*}
q_G(e)-\bar{q}_G(e) &=q_G(e)+\bar{q}_G(e) \\
&=\# \{ f\in E^{\rm ori}(G)|\ e\prec f\prec \bar{e} \} \ {\rm mod\ } 2 \\
&=\left(\sum_{f\in E(G)} \mu^2(f)\cdot \mu^2(e) \right)=(\xi^2_G\cdot \mu^2(e)), \\
\end{align*}
where the last equality follows from Proposition \ref{prop:mod2_formula}.
Since $\{ \mu^2(e)\}_{e\in E(G)}$ generates $H_1(\Sigma_{g,1};\mathbb{Z}_2)$, we obtain $q_G-\bar{q}_G=\xi^2_G$.
The second statement follows from Theorem \ref{thm:non-trivial_b}.
\end{proof}

\appendix

\section{A non-singular vector field associated to a once bordered trivalent fatgraph spine}

Let $G$ be a trivalent fatgraph spine of $\Sigma_{g,1}$.
In this appendix, we define a non-singular vector field $\mathcal{X}_G$ on $\Sigma_{g,1}$, and then consider the induced quadratic form on $H_1(\Sigma_{g,1};\mathbb{Z}_2)$.
In particular, we discuss a relationship between this quadratic form, $q_G$, and $\bar{q}_G$.

The following construction of $\mathcal{X}_G$ was communicated to the author by Gw\'ena\"el Massuyeau.

Let ${\rm Vect}(\Sigma_{g,1})$ be the homotopy set of non-singular vector fields on $\Sigma_{g,1}$.
In other words, ${\rm Vect}(\Sigma_{g,1})$ is the homotopy set of sections of the projection $\pi\colon UT\Sigma_{g,1}\to \Sigma_{g,1}$.
For $\mathcal{X}\in {\rm Vect}(\Sigma_{g,1})$, the winding number
$$
{\rm wind}_{\mathcal{X}}\colon \pi_1(UT\Sigma_{g,1})\to \mathbb{Z}
$$
is defined as follows.
Let $\widetilde{\gamma}\colon S^1\to UT\Sigma_{g,1}$ be a (based) loop.
For any $t\in S^1$, there uniquely exists an element $\Phi_t=\Phi(\mathcal{X},\widetilde{\gamma},t)\in S^1=U(1)$ such that $\mathcal{X}(\pi \circ \widetilde{\gamma}(t))\Phi_t=\widetilde{\gamma}(t)$.
Then ${\rm wind}_{\mathcal{X}}(\widetilde{\gamma})$ is defined to be the mapping degree of the map $S^1\to S^1,\ t\mapsto \Phi_t$.
The map ${\rm wind}_{\mathcal{X}}$ is a group homomorphism, and its mod $2$ reduction
$$
w_{\mathcal{X}}\in {\rm Hom}(\pi_1(UT\Sigma_{g,1}),\mathbb{Z}_2) \cong H^1(UT\Sigma_{g,1};\mathbb{Z}_2)
$$
is a spin structure on $\Sigma_{g,1}$.

Now we give the preferred orientation to any unoriented edge of $G$.
Let $v\in V^{\rm int}(G)$.
According to the type of $v$, we realize a small neighborhood $N_v$ of $v$ in the $xy$-plane as in Figure \ref{fig:app}, and then restrict the horizontal vector field $\partial/\partial x$ to $N_v$.
We extend the vector field on $\bigsqcup_v N_v$ thus obtained to a globally defined non-singular vector field $\mathcal{X}_G$, so that outside $\bigsqcup_v N_v$, each trajectory of $\mathcal{X}_G$ is perpendicular to $G$.

\begin{figure}
\begin{center}
\caption{$\mathcal{X}_G$ on $N_v$}
\label{fig:app}
\input{app.tex}
\end{center}
\vspace{0.3cm}
\end{figure}

Let $q_{\mathcal{X}_G}$ be the quadratic form on $H_1(\Sigma_{g,1};\mathbb{Z}_2)$ corresponding to $w_{\mathcal{X}}$.
Following Johnson \cite{JohSpin}, one can compute it as follows.
Let $\gamma$ be an oriented simple closed curve.
Consider the lift $\widetilde{\gamma}=(\gamma,\dot{\gamma})$ of $\gamma$ to a loop in $UT\Sigma_{g,1}$
(here $\dot{\gamma}$ is the velocity vector of $\gamma$ normalized to have the unit length).
Then one has
\begin{equation}
\label{eq:a_1}
q_{\mathcal{X}_G}([\gamma])={\rm wind}_{\mathcal{X}_G}(\widetilde{\gamma})+1 \ {\rm mod\ } 2.
\end{equation}
We apply this formula to $\gamma=\hat{e}$, where $e\in E^{\rm ori}(G)$.
Assume that $e$ has the preferred orientation.
Let $L(e)$ be the set of corners $(f,f')$ of $G$ (see the proof of Lemma \ref{lem:odd_edge_cycle}) such that
\begin{enumerate}
\item $e\preceq f \prec f' \preceq \bar{e}$, and
\item exactly one of $f$ and $f'$ have the preferred orientation.
\end{enumerate}
Here, $e\preceq f$ means $e\prec f$ or $e=f$.
For example, if $v$ is a vertex of type $1$ as in the left part of Figure \ref{fig:app}, only $(\bar{e}_2,e_3)$ is an element of $L(e)$ among the three corners around $v$.
Set $\lambda(e)=\# L(e)$.

\begin{lem}
We have ${\rm wind}_{\mathcal{X}_G}(\widetilde{\hat{e}})=(1-\lambda(e))/2$.
\end{lem}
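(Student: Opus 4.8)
The plan is to evaluate $\mathrm{wind}_{\mathcal{X}_G}(\widetilde{\hat{e}})$ by choosing a representative of $\hat{e}$ adapted both to the fatgraph $G$ and to the construction of $\mathcal{X}_G$, and then expressing the total turning of $\dot{\hat{e}}$ relative to $\mathcal{X}_G$ as a sum of local contributions: one coming from the single transverse intersection of $\hat{e}$ with $e$, and one for each corner of $G$ that $\hat{e}$ rounds.

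First I would recall that cutting $\Sigma_{g,1}$ along $G$ yields the disk $D_G$, that by Definition~\ref{dfn:total_order} the order $\prec$ on $E^{\mathrm{ori}}(G)$ is exactly the order in which oriented edges are met when travelling clockwise along $\partial D_G$ from $p$, and that two $\prec$-consecutive oriented edges constitute a corner of $G$ in the sense of the proof of Lemma~\ref{lem:odd_edge_cycle}. Since $e$ is preferably oriented, the arc of $\partial D_G$ labelled $e$ comes before the one labelled $\bar{e}$, and the preferred-orientation status changes an odd number of times along the chain $e=f_0\prec f_1\prec\cdots\prec f_k=\bar{e}$ of oriented edges lying between them (it is preferred at $f_0=e$ and not preferred at $f_k=\bar{e}$); in particular $\lambda(e)=\#L(e)$ is odd, so $(1-\lambda(e))/2\in\mathbb{Z}$. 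I would then take for $\hat{e}$ the loop that crosses $e$ once transversally at the midpoint $v_e$, perpendicularly to $G$ there, and which otherwise runs in a thin one-sided collar of $G$ along $\partial D_G$, visiting the arcs of the oriented edges between $e$ and $\bar{e}$ in $\prec$-order; its orientation is pinned by the condition of \S\ref{sec:fatgraph}. Homotoping $\hat{e}$ into this position changes neither the (unique) class $[\hat{e}]$ nor $\mathrm{wind}_{\mathcal{X}_G}(\widetilde{\hat{e}})$, and, unwinding the definition of the winding number recalled above, the latter equals $1/(2\pi)$ times the total turning of $\dot{\hat{e}}$ measured against $\mathcal{X}_G$.

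For the computation itself, note that along a collar arc alongside an edge, $\dot{\hat{e}}$ and $\mathcal{X}_G$ are respectively tangent and (away from the vertices) perpendicular to $G$, so their relative angle is locally constant; I claim it equals $+\pi/2$ over the arc of a preferably oriented $f$ and $-\pi/2$ over the arc of a non-preferably oriented $f$, because the two arcs of $\partial D_G$ bordering one unoriented edge lie on opposite sides of it while $\mathcal{X}_G$ is anchored, via Figure~\ref{fig:app}, to the side determined by the preferred orientation. Hence, as $\hat{e}$ rounds a corner $(f_i,f_{i+1})$, the relative angle is carried from $\pm\pi/2$ to $\pm\pi/2$, turning by $0$ if $f_i$ and $f_{i+1}$ have the same preferred-orientation status and by one half-turn $\mp\pi$ if exactly one of them is preferably oriented, i.e.\ precisely for the corners in $L(e)$; the transverse crossing of $e$, which joins the end of the $\bar{e}$-collar (relative angle $-\pi/2$) back to the start of the $e$-collar (relative angle $+\pi/2$) while $\mathcal{X}_G$ stays constant, contributes exactly one half-turn. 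Summing, the total relative turning is $\pi-\pi\lambda(e)$, an integer multiple of $2\pi$ since $\lambda(e)$ is odd, and dividing by $2\pi$ gives the asserted value $(1-\lambda(e))/2$. The main obstacle is the sign bookkeeping in the corner step: one must check, separately for a type~$1$ and a type~$2$ vertex and for each preferred-orientation pattern of the two edges of the rounded corner, that the relative turn is genuinely $0$ or exactly $-\pi$ (and not $+\pi$, nor off by a multiple of $2\pi$), using the explicit local models of $\mathcal{X}_G$ in Figure~\ref{fig:app}; matching up the orientation conventions of $\prec$, of $\hat{e}$, and of $\mathcal{X}_G$ on one small example (e.g.\ the genus-one spine) fixes all signs and reduces this to a finite pictorial verification.
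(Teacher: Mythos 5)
Your argument is correct and takes essentially the same route as the paper's proof: both realize $\hat{e}$ inside a regular neighborhood of $G$ and compute the winding number as a sum of local rotations relative to $\mathcal{X}_G$, namely $-\pi$ at each corner of $L(e)$ and $+\pi$ at the transverse crossing of the midpoint of $e$, giving $\pi(1-\lambda(e))$ in total. Your intermediate observation that the relative angle equals $\pm\pi/2$ along an edge arc according to its preferred-orientation status just makes explicit the sign bookkeeping that the paper leaves implicit.
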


\begin{proof}
Take a small regular neighborhood $N(G)$ of $G$, and we arrange that $\hat{e}$ stays inside $N(G)$ throughout.
Every time when $\hat{e}$ goes through a common vertex of a member of $L(e)$, the velocity vector of $\hat{e}$ rotates by an angle $-\pi$ with respect to $\mathcal{X}_G$.
Also, when $\hat{e}$ goes through the middle point of $e$, the velocity vector of $\hat{e}$ rotates by an angle $\pi$ with respect to $\mathcal{X}_G$.
This proves the lemma.
\end{proof}

In particular, using the fact that $\lambda(e)$ is odd (since $e$ has the preferred orientation and $\bar{e}$ does not), we have from (\ref{eq:a_1}) that
$$
q_{\mathcal{X}_G}(e)=q_{\mathcal{X}_G}([\hat{e}])=\frac{1-\lambda(e)}{2}+1\ {\rm mod\ } 2=
\frac{1+\lambda(e)}{2}\ {\rm mod\ } 2.
$$

\begin{prop}
Let $G$ be a trivalent fatgraph spine of $\Sigma_{g,1}$.
Then the quadratic forms $q_{\mathcal{X}_G}$, $q_G$, and $\bar{q}_G$ are distinct to each other.
\end{prop}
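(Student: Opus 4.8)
The plan is to dispose of one of the three inequalities for free and to reduce the other two to a combinatorial count. By Theorem~\ref{thm:xi_spin} we already have $q_G-\bar q_G=\xi^2_G\neq 0$, hence $q_G\neq\bar q_G$; so it remains to prove $q_{\mathcal X_G}\neq q_G$ and $q_{\mathcal X_G}\neq\bar q_G$. Since $q_{\mathcal X_G},q_G,\bar q_G$ are quadratic forms on $H_1(\Sigma_{g,1};\mathbb Z_2)$ (Proposition~\ref{prop:spin} and the construction of $q_{\mathcal X_G}$ above), each of the differences $q_{\mathcal X_G}-q_G$ and $q_{\mathcal X_G}-\bar q_G$ is a homomorphism $H_1(\Sigma_{g,1};\mathbb Z_2)\to\mathbb Z_2$; because the classes $\{\mu^2(e)\}_{e\in E(G)}$ generate $H_1(\Sigma_{g,1};\mathbb Z_2)$ (Lemma~\ref{lem:H_1(G)}), it suffices to exhibit an edge $e$ with $q_{\mathcal X_G}(e)\neq q_G(e)$ and an edge with $q_{\mathcal X_G}(e)\neq\bar q_G(e)$.

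Next I would make the differences explicit. Give every edge its preferred orientation and list $E^{\rm ori}(G)=\{g_1=t\prec g_2\prec\cdots\prec g_{12g-2}=\bar t\}$, and call a corner of $G$ (a pair of consecutive oriented edges on $\partial D_G$, in the sense of the proof of Lemma~\ref{lem:odd_edge_cycle}) of \emph{type ${++}$} (resp. \emph{type ${--}$}) if both of its oriented edges are preferably oriented (resp. both are not). For a preferably oriented $e=g_\ell$, $\bar e=g_r$, grouping the maximal runs of preferably/non-preferably oriented edges among $g_\ell,\dots,g_r$ lets one rewrite the formula $q_{\mathcal X_G}(e)=\tfrac{1+\lambda(e)}{2}$ obtained above, and combined with $q_G(e)=\#\{f\in E(G)\mid e\prec f\prec\bar e\}$ and $\bar q_G(e)=\#\{f\in E(G)\mid e\prec\bar f\prec\bar e\}$ one gets
\[
q_{\mathcal X_G}(e)+q_G(e)\equiv 1+\#\{\,{++}\text{-corners at a position in }[\ell,r-1]\,\}\pmod 2,
\]
and the same identity for $\bar q_G(e)$ with ${--}$-corners. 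Reading off Figure~\ref{fig:2types} (as in the proof of Proposition~\ref{prop:type1_2}) one checks that every trivalent vertex $v$ has exactly one ${++}$-corner and exactly one ${--}$-corner among its three corners, the ${++}$-corner of $v$ sitting at the position of the $\prec$-smallest oriented edge $e_1$ pointing to $v$; consequently the ${++}$-corners are in bijection with the edges of the greedy maximal tree $T_G$, an edge $E\in E(T_G)$ corresponding to the ${++}$-corner located at the position of the preferred orientation of $E$ (using the description of $T_G$ recalled in Section~\ref{sec:non-trivial}).

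To produce the detecting edge I would sum the displayed identity over all $e\in E(G)$. Using the elementary fact that, for any matching of the $12g-2$ positions, the number of chords separating $\{1,\dots,k\}$ from its complement is $\equiv k\pmod 2$, the sum collapses:
\[
\sum_{e\in E(G)}\bigl(q_{\mathcal X_G}(e)+q_G(e)\bigr)\ \equiv\ \#E(G)+\#\{\,{++}\text{-corners at odd positions}\,\}\ \equiv\ 1+\#\{\,E\in E(T_G)\ :\ \text{pref.\ or.\ of }E\text{ at an odd position}\,\}\pmod 2,
\]
and likewise for $\bar q_G$ with ${--}$-corners. Thus it is enough to show that the number of tree edges whose preferred orientation occupies an odd position is even (and the ${--}$-analogue): then the sum is odd, so some edge realizes $q_{\mathcal X_G}(e)\neq q_G(e)$ (resp. $\neq\bar q_G(e)$), and together with $q_G\neq\bar q_G$ the three quadratic forms are pairwise distinct. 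This last parity statement is the main obstacle; I expect to prove it by analysing the word $\epsilon_1\cdots\epsilon_{12g-2}$ recording for each $k$ whether $g_k$ is preferably oriented — it consists of exactly $2g$ maximal runs of each sign, the first beginning at position $1$ — and counting, run by run, the odd positions that carry a ${++}$-corner. Pinning down precisely where on $\partial D_G$ the ${++}$- and ${--}$-corners of a type-$1$ versus a type-$2$ vertex sit (also needed to validate the rewriting in the second paragraph) is the delicate book-keeping point on which the argument rests.
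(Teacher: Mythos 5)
Your reduction in the first paragraph is sound, and the combinatorial identity you set up is in fact correct: writing $e=g_\ell$, $\bar e=g_r$ and letting $s$ be the number of maximal runs of preferably oriented edges in $g_\ell,\dots,g_r$, one has $\lambda(e)=2s-1$, $q_{\mathcal X_G}(e)\equiv s$, $q_G(e)\equiv(\text{number of ${+}$ positions in }[\ell,r])-1$, and the number of ${++}$-corners in $[\ell,r-1]$ is that same count minus $s$, which gives your displayed congruence; the corner bookkeeping (each trivalent vertex carries exactly one ${++}$- and one ${--}$-corner, the ${++}$-corner sitting at the position of the $\prec$-smallest incoming edge, i.e.\ of the greedy tree edge into that vertex) also checks against Figure \ref{fig:2types}. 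But the argument is not a proof: everything is funnelled into the final parity claim that the number of ${++}$-corners (equivalently, of first arrivals of the boundary traversal at internal vertices) occupying odd positions is even, and you neither prove this nor give any reason to believe it beyond "I expect to prove it by analysing the word $\epsilon_1\cdots\epsilon_{12g-2}$''. This is a genuinely global statement about where the $2g$ runs of each sign begin, it does not follow from anything you have established, and it is strictly stronger than what the proposition requires (the proposition only forces \emph{some} edge to distinguish $q_{\mathcal X_G}$ from $q_G$, which makes your sum nonzero as a count but not necessarily odd). As it stands the proposal is a plan with its decisive lemma missing, and you say as much yourself.

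The gap is avoidable, because summing over all edges throws away the freedom to choose a good test edge. Take $e$ to be the \emph{last} preferably oriented edge, i.e.\ the unique preferably oriented $e$ such that every $f$ with $e\prec f$ fails to be preferably oriented. Then the sign sequence from $e$ to $\bar e$ is $+,-,-,\dots,-$, so $\lambda(e)=1$ and $q_{\mathcal X_G}(e)=1$, while $q_G(e)=\#\{f\mid e\prec f\prec\bar e,\ f\text{ preferred}\}=0$; hence $q_{\mathcal X_G}\neq q_G$. Dually, take $e$ to be the preferably oriented edge whose reverse $\bar e$ is the \emph{first} non-preferably oriented element of $E^{\rm ori}(G)$: then every $f$ with $e\prec f\prec\bar e$ is preferably oriented, so $\bar q_G(e)=0$ while again $\lambda(e)=1$ and $q_{\mathcal X_G}(e)=1$. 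Together with $q_G-\bar q_G=\xi^2_G\neq0$ from Theorem \ref{thm:xi_spin}, this finishes the proof in two lines and is exactly the route the paper takes; I would either adopt it or supply a complete proof of your parity lemma before relying on the summation argument.
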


\begin{proof}
By Theorem \ref{thm:xi_spin}, it is sufficient to prove $q_{\mathcal{X}_G}\neq q_G$ and $q_{\mathcal{X}_G}\neq \bar{q}_G$.

Let $e_1\in E^{\rm ori}(G)$ be the ``last'' prefarably oriented edge.
Namely, $e_1$ is the unique element such that $e_1$ has the preferred orientation and if $e_1\prec f$, $f$ does not have the preferred orientation.
We have $\lambda(e_1)=1$ and $q_{\mathcal{X}_G}(e_1)=(1+1)/2=1$.
On the other hand, since there are no preferably oriented edge $f$ with $e_1\prec f\prec \bar{e}_1$, we have $q_G(e_1)=0$.
Hence $q_{\mathcal{X}_G}\neq q_G$.

Let $e_2\in E^{\rm ori}(G)$ be the unique element such that $e_2$ has the preferred orientation and if $f\prec \bar{e}_2$, $f$ has the preferred orientation.
We have $\lambda(e_2)=1$ and $q_{\mathcal{X}_G}(e_2)=1$.
On the other hand, since any edge $f\in E^{\rm ori}(G)$ with $e_2\prec f\prec \bar{e}_2$ has the preferred orientation, we have $\bar{q}_G(e_2)=0$.
Hence $q_{\mathcal{X}_G}\neq \bar{q}_G$.
\end{proof}


\begin{thebibliography}{00}

\bibitem{ABP}
J. E. Andersen, A. Bene and R. C. Penner,
Groupoid extensions of mapping class representations for bordered surfaces,
{\it Topology Appl.} {\bf 156}, 2713--2725 (2009)

\bibitem{B1}
A. Bene,
A chord diagrammatic presentation of the mapping class group of a once bordered surface,
{\it Geom.\ Dedicata} {\bf 144},
171--190 (2010)

\bibitem{B2}
A. Bene,
Mapping class factorization via fatgraph Nielsen reduction,
{\it Osaka J.\ Math} {\bf 48},
1047--1061 (2011)

\bibitem{BKP}
A. Bene, N. Kawazumi and R. C. Penner,
Canonical extensions of the Johnson homomorphisms to the Torelli groupoid,
{\it Adv.\ Math.} {\bf 221}, 627--659 (2009)

\bibitem{BE}
B. H. Bowditch and D. B. A. Epstein,
Natural triangulations associated to a surface,
{\it Topology} {\bf 27}, 91--117 (1988)

\bibitem{Earle}
C. J. Earle,
Families of Riemann surfaces and Jacobi varieties,
{\it Ann.\ Math.} {\bf 107}, 255--286 (1978)


\bibitem{Har1}
J. Harer,
Stability of the homology of the mapping class group of an orientable surface,
{\it Ann.\ Math.} {\bf 121}, 215--249 (1985)

\bibitem{Har2}
J. Harer,
The virtual cohomological dimension of the mapping class group of an orientable surface,
{\it Invent.\ Math.} {\bf 84}, 157--176 (1986)

\bibitem{HZ}
J. Harer and D. Zagier,
The Euler characteristic of the moduli space of curves,
{\it Invent.\ Math.} {\bf 85}, 457--485 (1986)

\bibitem{JohSpin}
D. Johnson,
Spin structures and quadratic forms on surfaces,
{\it J.\ London Math.\ Soc.} {\bf 22}, 365--373 (1980)

\bibitem{Joh80} D. Johnson,
An abelian quotient of the mapping class group $\mathcal{I}_g$,
{\it Math.\ Ann.} 249 (1980), 225--242.

\bibitem{Joh83} D. Johnson,
A survey of the Torelli group,
In: {\it Low Dimensional Topology},
Contemp. Math. 20, AMS Providence 1983, 165--179.

\bibitem{KawSurv}
N. Kawazumi,
Canonical 2-forms on the moduli of Riemann surfaces,
In: {\it Handbook of Teichm\"uller theory, Vol.\ II}, 217--237,
A. Papadopoulos (ed), EMS Publishing House, Zurich (2009)

\bibitem{KawMor}
N. Kawazumi and S. Morita,
The primary approximation to the cohomology of the moduli space of curves and cocycles for the stable characteristic classes,
{\it Math.\ Res.\ Lett.} {\bf 3}, 629--641 (1996)

\bibitem{Kon}
M. Kontsevich,
Intersection theory on the moduli space of curves and the matrix Airy function,
{\it Comm.\ Math.\ Phys.} {\bf 147}, 1--23 (1992)

\bibitem{KPT}
Y.\ Kuno, R.\ C.\ Penner, and V.\ Turaev,
Marked fatgraph complexes and surface automorphisms,
Geom. Dedicata \textbf{167}, 151--166 (2013)

\bibitem{Mas}
G. Massuyeau,
Canonical extensions of Morita homomorphisms to the Ptolemy groupoid,
{\it Geom.\ Dedicate} {\bf 158}, 365--395 (2012)

\bibitem{MorFam1}
S. Morita,
Families of Jacobian manifolds and characteristic classes of surface bundles I,
{\it Ann.\ Inst.\ Fourier} {\bf 39}, 777--810 (1989)

\bibitem{MorExt}
S. Morita,
The extension of Johnson's homomorphism from the Torelli group to the mapping class group,
{\it Invent.\ math.} {\bf 111}, 197--224 (1993)

\bibitem{Mor93} S. Morita,
Abelian quotients of subgroups of the mapping class group of surfaces,
{\it Duke Math.\ J.} 70 (1993), 699--726.

\bibitem{MP}
S.\ Morita and R.\ C.\ Penner,
Torelli groups, extended Johnson homomorphisms, and new
cycles on the moduli space of curves,
Math.\ Proc.\ Camb.\ Phil.\ Soc. \textbf{144}, 651--671 (2008)

\bibitem{P87}
R. C. Penner,
The decorated Teichm\"uller space of punctured surfaces,
{\it Comm.\ Math.\ Phys.} {\bf 113}, 299--339 (1987)

\bibitem{P88}
R. C. Penner,
Perturbative series and the moduli space of Riemann surfaces,
{\it J.\ Differential Geom.} {\bf 27}, 35--53 (1988)

\bibitem{P04}
R. C. Penner,
Decorated Teichm\"uller space of bordered surface,
{\it Comm.\ Anal.\ Geom.} {\bf 12}, 793--820 (2004)

\bibitem{P-book}
R. C. Penner,
{\it Decorated Teichm\"uller Theory}, European Mathematical Society, 2012

\bibitem{PenPriv}
R. C. Penner,
private communication 2015.

\bibitem{PZ15}
R. C. Penner and A. M. Zeitlin,
Decorated super-Teichm\"uller space,
preprint, arXiv:1509.06302v2 (2015)

\bibitem{Stre}
K. Strebel,
{\it Quadratic differentials}, Ergeb.\ Math.\ Grenzgeb. {\bf 5},
Springer-Verlag, Heidelberg, 1984 

\end{thebibliography}
\end{document}